\numberwithin{equation}{section} \numberwithin{figure}{section}
\DeclareMathOperator{\Pic}{Pic} \DeclareMathOperator{\Div}{Div}
\DeclareMathOperator{\Gal}{Gal} 
 \DeclareMathOperator{\rank}{rank}
\DeclareMathOperator{\Hom}{Hom} \DeclareMathOperator{\re}{Re}
\DeclareMathOperator{\im}{Im}   
\DeclareMathOperator{\vol}{vol} 
 \DeclareMathOperator{\Val}{Val}
\DeclareMathOperator{\Br}{Br} 
 \DeclareMathOperator{\res}{\partial}
 \DeclareMathOperator{\Sub}{Sub}
 \DeclareMathOperator{\Ind}{Ind}
\DeclareMathOperator{\HH}{H} \DeclareMathOperator{\Inn}{Inn}
\DeclareMathOperator{\RH}{R} \DeclareMathOperator{\Res}{R}
\DeclareMathOperator{\GL}{GL}
\DeclareMathOperator{\SL}{SL}
\DeclareMathOperator{\PGL}{PGL}
\DeclareMathOperator{\SO}{SO}
\DeclareMathOperator{\Spin}{Spin}
\DeclareMathOperator{\tr}{tr}
\let\det\relax
\DeclareMathOperator{\det}{det}
\DeclareSymbolFont{cyrletters}{OT2}{wncyr}{m}{n}
\DeclareMathSymbol{\Sha}{\mathalpha}{cyrletters}{"58}
\DeclareMathSymbol{\Be}{\mathalpha}{cyrletters}{"42}
\newcommand{\thorn}{\textit{\th}}
\newcommand{\OO}{\mathcal{O}}
\newcommand{\Norm}{N}
\newcommand{\Adele}{\mathbf{A}}
\newcommand{\sbf}{\mathbf{s}}
\newcommand{\A}{\mathscr{A}}
\newcommand{\br}{\mathscr{B}}
\newcommand{\R}{\mathscr{R}}
\newcommand{\X}{\mathscr{X}}
\newcommand\PP{\mathbb{P}}
\newcommand\ZZ{\mathbb{Z}}
\newcommand\NN{\mathbb{N}}
\newcommand\QQ{\mathbb{Q}}
\newcommand\RR{\mathbb{R}}
\newcommand\CC{\mathbb{C}}
\newcommand\GG{\mathbb{G}}
\newcommand\Gm{\GG_\mathrm{m}}
\newcommand{\K}{\mathsf K}
\newtheorem{lemma}{Lemma}
\newtheorem{proposition}[lemma]{Proposition}
\newtheorem{theorem}[lemma]{Theorem}
\newtheorem{corollary}[lemma]{Corollary}
\theoremstyle{definition}
\newtheorem{definition}[lemma]{Definition}
\newtheorem{remark}[lemma]{Remark}
\numberwithin{lemma}{section}
\title[Zero-loci of Brauer group elements on semi-simple algebraic groups]
{Zero-loci of Brauer group elements on semi-simple algebraic groups}
\author{\sc Daniel Loughran}
\address{
University of Manchester \\
School of Mathematics\\
Oxford Road\\
Manchester\\
M13 9PL\\
UK}
\email{daniel.loughran@manchester.ac.uk}
\author{\sc Ramin Takloo-Bighash}
\address{
Department of Mathematics, Statistics, and Computer Science\\
University of Illinois at Chicago\\
851 S Morgan St (M/C 249)\\
Chicago, IL 60202}
\email{rtakloo@math.uic.edu}
\author{\sc Sho Tanimoto}
\address{Department of Mathematics, Faculty of Science, Kumamoto University, Kurokami 2-39-1 Kumamoto 860-8555 Japan}
\email{stanimoto@kumamoto-u.ac.jp}
\subjclass[2010]
{14G05 (primary), 
11D45, 
14F22, 
(secondary)}
\keywords{Heights, Counting rational points, semi-simple groups, and Brauer groups}
\begin{document}

\begin{abstract}
	We consider the problem of counting the number of rational points
	of bounded height in the zero-loci of Brauer group elements on
	semi-simple algebraic groups over number fields. We obtain asymptotic formulae for the counting
	problem for wonderful compactifications  using the spectral
	theory of automorphic forms. Applications include asymptotic formulae 
	for the number of matrices over $\QQ$
	whose determinant is a sum of two squares. These results provide a positive answer
	to some cases of a question of Serre concerning such counting problems.
\end{abstract}

\maketitle

\thispagestyle{empty}

\tableofcontents

\section{Introduction} \label{sec:intro}
Given a family of varieties over a number field $k$, a natural problem is to study the distribution
of the varieties in the family with a rational point. In general,
such problems are too difficult, due to needing to control failures of the Hasse principle.
Even in simple cases where the Hasse principle holds, such as for families of conics, these questions lead to rich
arithmetic problems.

Serre \cite{Ser90} considered such problems for families of conics parametrised
by a projective space, where he obtained upper bounds for the number of conics of bounded
height in the family with a rational point.
He showed, for example, that the number of integers  $|a_0|,|a_1|,|a_2| \leq B$ for which
the diagonal conic
\begin{equation} \label{eqn:diagonal}
	a_0x_0^2 + a_1x_1^2 + a_2x_2^2 = 0 \quad \subset \PP_\QQ^2
\end{equation}
has a rational point is $O(B^3/(\log B)^{3/2})$. He conjectured that his upper bounds were sharp,
and in the special case \eqref{eqn:diagonal} this was confirmed by Hooley \cite{Hoo93}.

In this paper we answer more cases of Serre's problem. One obtains a more general 
conceptual framework by working with \emph{Brauer group elements}, as opposed to families of conics; this approach
was also taken by Serre in \cite{Ser90}. One recovers the case of conics by working with Brauer group elements
coming from quaternion algebras (see \cite{Ser90, Lou13} for further details; we also give some applications of this type later
on).

The general set-up for such problems is as follows.
Let $X$ be a smooth projective variety over a number field $k$ with a height function $H$.
Let $U \subset X$ be a dense open subset and let $\br \subset \Br U$ be a finite subgroup of the Brauer group of $U$. Let
$$U(k)_\br=\{x \in U(k): b(x)=0 \in \Br k \text{ for all } b \in \br\},$$
denote the \emph{zero-locus} of $\br$. For $B>0$, we are interested in the counting function
\begin{equation} \label{def:Brauer_counting_function}
	N(U,H,\br,B) = \#\{x \in U(k)_\br:H(x) \leq B\}.
\end{equation}
Serre \cite{Ser90} obtained upper bounds in the case $X = \PP_\QQ^n$ and $\br$ has order $2$. Here, for example, on $\PP_\QQ^n$ one can take the height function
\begin{equation} \label{def:height_Pn}
	H(x_0,\ldots, x_n) = \sqrt{|x_0|^2 + \cdots + |x_n|^2},
\end{equation}
where we choose a representative such that $x_i \in \ZZ$ and $\gcd(x_0,\ldots,x_n) = 1$.
Serre's upper bounds were subsequently generalised to any finite collection
of Brauer group elements defined on any open subset of $\PP^n_k$ in \cite[Thm.~5.10]{LS16}, and state
that
$$N(U,H,\br,B) \ll \frac{B^{n+1}}{(\log B)^{\Delta_{\PP^n}(\br)}}, \quad \mbox{where }  \Delta_{\PP^n}(\br)=\sum_{D \in (\PP^n)^{(1)}}\left(1 - \frac{1}{|\res_D(\br)|}\right).$$
Here we denote by $X^{(1)}$ the set of codimension one points of a smooth variety $X$,  for $D \in X^{(1)}$ by $\res_D: \Br X \to \HH^1(\kappa(D), \QQ/\ZZ)$ the associated residue map, and by $|\res_D(\br)|$ the order of the group of residues $\res_D(\br)$.
These upper bounds are conjecturally sharp, and the problem is to obtain the correct lower bounds, or even
asymptotic formulae, under the necessary assumption that $U_{\br}(k) \neq \emptyset$.

A programme of study on such problems was began by the first-named author in \cite{Lou13}, where versions of Serre's
question were answered for anisotropic tori. For example, this gives results whenever $U \subset \PP_k^n$
is the complement of a norm form for a  field extension of $k$ of degree $n+1$. 
Another important case has been handled by Sofos \cite{Sof16},
which obtains the correct lower bounds whenever $U$ is the 
complement of a collection of closed points of $\PP_\QQ^1$
of total degree at most $3$ and $\br$ consists of a single element of order $2$.

Our first result answers Serre's question when $U$ is the complement
of the determinant hypersurface in $\PP_\QQ^{n^2-1}$.

\begin{theorem} \label{thm:PGL}
	Let $n>1$ and consider the determinant $\det_n: M_n(\QQ) \to \QQ$, viewed 
	as a homogeneous polynomial of degree $n$ in $n^2$ variables.
	Let $U = \PP_\QQ^{n^2-1} \setminus \{ \det_n(x) = 0\}$, equipped with the height $H$ from
	\eqref{def:height_Pn}. Let $b \in \Br U$ be an element
	such that $U(\QQ)_b \neq \emptyset$. Then there exists $c_{n,b,H} > 0$ such that
	$$N(U,H,b,B) \sim c_{n,b,H} \frac{B^{n^2}}{(\log B)^{1 - 1/|b|}}, \quad \text{ as } B \to \infty,$$
	where $|b|$ is the order of $b$ in the Brauer group.
\end{theorem}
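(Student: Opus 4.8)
We realise $U$ as a semisimple group and deduce the theorem from the spectral method applied to a wonderful compactification. A $\QQ$-point of $\PP_\QQ^{n^2-1} = \PP(M_n)$ with non-vanishing determinant is exactly an invertible matrix up to scalars, so there is a canonical isomorphism $U \cong \PGL_{n,\QQ}$, under which $H|_U$ is the height attached to $\OO(1)$ on $\PP(M_n)$. Through the standard representation this corresponds to the big and semi-ample line bundle $L = \pi^*\OO(1)$ on the wonderful compactification $X$ of $\PGL_n$, where $\pi\colon X \to \PP_\QQ^{n^2-1}$ is the canonical birational morphism — an isomorphism for $n=2$, in which case $X = \PP^3_\QQ$ and the boundary is the determinant quadric. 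Since $\pi$ restricts to an isomorphism over $\PGL_n$, counting $\PGL_n(\QQ)$-points of bounded $L$-height on $X$ subject to the Brauer condition is literally the function $N(U,H,b,B)$, and the plan is to feed this into our general asymptotic on wonderful compactifications, checking that the governing invariants are $a = n^2$, trivial logarithmic power in the absence of the Brauer condition, and Brauer defect $1 - 1/|b|$.

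For the Brauer side, note that $D := \{\det_n = 0\}$ is geometrically irreducible and $\Br\PP_\QQ^{n^2-1} = \Br\QQ$, so purity gives an injection $\Br U/\Br\QQ \hookrightarrow \HH^1(\QQ(D),\QQ/\ZZ)$ via $\res_D$. Thus $b$ is determined modulo constants by $\res_D(b)$, and the hypothesis $U(\QQ)_b \ne\emptyset$ forces $|b| = |\res_D(b)| =: m$. An elementary computation of $\Br\PGL_n$ modulo constants shows that $\res_D(b)$ is a constant cyclic character, so that $b$ is represented by a cyclic algebra $(\chi,\det)$ attached to a cyclic extension $K/\QQ$ of degree $m$; writing $\tilde g\in\GL_n(\QQ)$ for a lift of $g$ (so $\det\tilde g$ is defined modulo $(\QQ^\times)^n$),
\[
U(\QQ)_b = \{\, g\in\PGL_n(\QQ) : \inv_v(\chi,\det\tilde g) = 0 \text{ for all places }v \,\},
\]
equivalently $\mathbf 1[b(g)=0] = \prod_v \mathbf 1[\inv_v(\chi,\det\tilde g)=0]$. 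For $n=2$ and $\chi$ the quadratic character of $\QQ(\sqrt{-1})$ this is the requirement that $\det\tilde g$ be a sum of two squares, the source of the application quoted in the abstract. The residues of $b$ along the boundary divisors $E_1,\dots,E_{n-1}$ of $X$, all of which dominate $D$, also have order $m$.

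For the analytic count, observe that $\ord_v(\det\tilde g)$ is bi-invariant under a suitable open compact subgroup, so $\mathbf 1_{b=0} := \prod_v \mathbf 1[\inv_v(\chi,\det)=0]$ is a well-behaved function on $\PGL_n(\Adele)$ and $N(U,H,b,B)$ is governed by the analytic behaviour of the twisted height zeta function $Z(s) = \sum_{\gamma\in\PGL_n(\QQ)} H(\gamma)^{-s}\,\mathbf 1_{b=0}(\gamma)$. One studies $Z(s)$ via the spectral decomposition of $L^2(\PGL_n(\QQ)\backslash\PGL_n(\Adele))$: the constant functions produce a Tamagawa-type Euler product $\mathfrak H(s) = \int_{\PGL_n(\Adele)} \mathbf 1_{b=0}(g)\,H(g)^{-s}\,dg$, and the Eisenstein and cuspidal contributions have to be controlled. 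The rightmost singularity of $\mathfrak H(s)$ is at $s = n^2$; without the Brauer condition it is a simple pole with positive residue, recovering the known asymptotic $\sim c\,B^{n^2}$ for $\PGL_n(\QQ)$ with no condition. Imposing the Brauer condition multiplies the finite Euler product by a factor which, at a prime $p$, is trivial when $p$ splits in $K$ and equals $1 - p^{-1} + O(p^{-2})$ otherwise — the local condition $\ord_p(\det\tilde g)\equiv 0$ modulo the order of $\mathrm{Frob}_p$ in $\Gal(K/\QQ)$ excluding, to leading order, only $\ord_p(\det\tilde g) = 1$ — and by the Chebotarev density theorem the non-split primes have density $1 - 1/m$; hence this factor has a zero of order exactly $1 - 1/m$ at $s = n^2$. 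Granting the spectral estimates, $Z(s)$ then has a branch singularity of type $(s-n^2)^{-1/m}$ at $s = n^2$, and a Tauberian theorem of Selberg--Delange type gives $N(U,H,b,B)\sim c_{n,b,H}\,B^{n^2}/(\log B)^{1-1/m}$. Positivity of $c_{n,b,H}$ follows from $U(\QQ)_b\ne\emptyset$, which makes every local density and the archimedean volume strictly positive; the matching upper bound $\ll B^{n^2}/(\log B)^{1-1/|b|}$ is in any case automatic from \cite[Thm.~5.10]{LS16}, since $b$ ramifies only along $D$.

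The main obstacle is the spectral estimate. One must bound the contributions of the Eisenstein series attached to every standard parabolic of $\PGL_n$, and of the residual spectrum, to the twisted zeta function $Z(s)$, with enough uniformity to guarantee that the branch point of order $1/m$ at $s = n^2$ originates solely from the constant functions; this demands sharp analytic control of the relevant (degenerate) Eisenstein series and their intertwining operators, together with the harmonic analysis of the local test functions $\mathbf 1[\inv_v(\chi,\det)=0]\cdot H_v(\cdot)^{-s}$ at the finitely many ramified places, and finally a Tauberian argument adapted to logarithmic branch singularities. By comparison, verifying that the lower-dimensional boundary strata of $X$ do not contribute at the same order is routine.
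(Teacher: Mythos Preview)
Your overall architecture is the same as the paper's: identify $U\cong\PGL_n$, pass to the wonderful compactification $\pi:X\to\PP^{n^2-1}$, set $L=\pi^*\OO(1)$, and invoke Theorem~\ref{thm:arbitrarybiglinebundles}. The identification $|b|=|\res_D(b)|$ via purity/injectivity of the residue is also exactly right. But there is a genuine gap in how you extract the exponent $1-1/|b|$ from the general theorem.

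The invariant $\Delta_X(L,\br)$ in Theorem~\ref{thm:arbitrarybiglinebundles} is a sum only over those boundary divisors \emph{not} lying in the support of an effective divisor $\QQ$-linearly equivalent to $a(L)L+K_X$. There are $n-1$ boundary components $Y_2,\dots,Y_n$ on $X$, and you never determine which of them enter this sum. Your sentence ``the residues of $b$ along the boundary divisors $E_1,\dots,E_{n-1}$ of $X$, all of which dominate $D$, also have order $m$'' is both unnecessary and incorrect: the exceptional divisors $Y_2,\dots,Y_{n-1}$ lie over the loci of matrices of rank $<r$, which have strictly smaller dimension than $D$, so they do \emph{not} dominate $D$; and the residues of $b$ along them are not obviously of order $m$ (the pullback of $\det_n$ vanishes to higher order along these exceptional divisors, so the residue order can drop). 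The paper's point is rather that these $Y_r$ with $r<n$ are \emph{irrelevant}: since $X\to\PP^{n^2-1}$ is a blow-up of a smooth variety one has $\pi^*(-K_{\PP^{n^2-1}})=-K_X+\sum_{r=2}^{n-1}a_rY_r$ with $a_r>0$ (terminal discrepancies), hence $a(L)L+K_X=\sum_{r=2}^{n-1}a_rY_r$ and only the strict transform $Y_n$ of $D$ survives in $\Delta_X(L,b)$. This is simultaneously why $b(L)=1$ and why $\Delta_X(L,b)=1-1/|\res_{Y_n}(b)|=1-1/|b|$. Without this discrepancy computation your claimed values of $b(L)$ and $\Delta_X(L,b)$ are unjustified.

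Separately, your third and fourth paragraphs re-sketch the spectral analysis and Euler-product regularisation that are already packaged inside Theorem~\ref{thm:arbitrarybiglinebundles}. If you are invoking the general theorem, this is redundant; if you intend a direct proof bypassing it, you would need to regularise $\int_{\PGL_n(\Adele)}H(g)^{-s}\mathbf 1_{b=0}(g)\,\mathrm dg$ with respect to all $n-1$ boundary components of $X$, not just $D$, and then argue that the contributions of $Y_2,\dots,Y_{n-1}$ produce a holomorphic factor at $s=n^2$ --- which again comes down to the discrepancy computation above. Either way, the missing ingredient is the same.
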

\begin{remark}
For each rational point $P \in U(\mathbb Q)$, one may adjust $b$ by a constant algebra so that $P \in U(\QQ)_b$. In this way, the condition $U(\QQ)_b \neq \emptyset$ frequently happens as soon as we have $U(\QQ) \neq \emptyset$.
\end{remark}

This result has a classical interpretation in terms of Diophantine equations involving matrices
(note that the $U$ from Theorem \ref{thm:PGL} may be naturally identified with $\PGL_n$).
Namely, Theorem \ref{thm:PGL} allows one to count the number of matrices
of bounded height whose determinant is a norm from a given cyclic extension.

\begin{corollary} \label{cor:cyclic}
	Let $n > 1$ and let $K/\QQ$ be a cyclic extension of degree $d \mid n$. There exists $c_{n,K,H} > 0$ such that
	the number of matrices $g \in \PGL_n(\QQ) \subset \PP^{n^2-1}(\QQ)$ of height $H$ less than $B$ for which the equation
	$$\Norm_{K/\QQ}(x) = \det_n(g)$$
	has a solution for some $x \in K$, is asymptotically 
	$$c_{n,K,H} \frac{B^{n^2}}{(\log B)^{1 - 1/d}}, \quad B \to \infty.$$
\end{corollary}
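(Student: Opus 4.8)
The plan is to deduce the corollary from Theorem~\ref{thm:PGL} by writing down an explicit Brauer class on $U \cong \PGL_n$ whose zero-locus is exactly the set of $g$ for which $\det_n(g)$ is a norm from $K$. Fix a generator $\chi$ of the cyclic character group $\Hom(\Gal(K/\QQ),\QQ/\ZZ) \cong \ZZ/d\ZZ$, regarded as a surjection $\chi \colon \Gal(\Qbar/\QQ) \twoheadrightarrow \ZZ/d\ZZ$ with fixed field $K$; note $\chi$ is killed by $n$, since $d \mid n$. Fix homogeneous coordinates $x_0, \dots, x_{n^2-1}$ on $\PP^{n^2-1}_\QQ$, and let $b \in \Br \QQ(U)$ be the cyclic algebra $b = (\chi, \det_n / x_0^n)$, where $\det_n / x_0^n$ is viewed as an element of $\QQ(U)^\times$ (it is a nonzero rational function since $\det_n$ is nowhere vanishing on $U$).

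The first step is to verify that $b$ in fact lies in the subgroup $\Br U \subseteq \Br \QQ(U)$, i.e.\ that $\res_D(b) = 0$ for every $D \in U^{(1)}$. The function $\det_n / x_0^n$ is a unit on $U$ away from the divisor $\{x_0 = 0\} \cap U$, along which it has order $-n$; since $\res_D(b) = \ord_D(\det_n/x_0^n)\cdot\chi|_{\kappa(D)}$ and $n\chi = 0$, this residue vanishes, so $b$ is unramified on $U$ and hence extends to $\Br U$ by purity. The divisibility $d \mid n$ is essential here. The second step is to pin down the order of $b$: on the one hand $d\cdot b = 0$, since $d\chi = 0$; on the other hand the residue of $b$ along the determinant hypersurface $D_0 = \{\det_n = 0\} \subset \PP^{n^2-1}_\QQ$ is $\chi|_{\kappa(D_0)}$, which retains order $d$ because $D_0$ is geometrically irreducible and so $\QQ$ is algebraically closed in $\kappa(D_0)$. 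Hence $|b| = d$.

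Next I would describe the zero-locus. Given $g \in U(\QQ)$, pick a coordinate $x_i$ not vanishing at $g$, so that $\det_n / x_i^n$ is a local unit at $g$; since $(\chi, (x_i/x_0)^n) = 0$ (an $n$-th power, with $n\chi = 0$), we have $b = (\chi, \det_n / x_i^n)$ in $\Br U$, and evaluation yields $b(g) = (\chi, \det_n(g) / x_i(g)^n) = (\chi, \det_n(g))$, again discarding the $n$-th power $x_i(g)^n$. By the standard criterion for splitting of cyclic algebras, $(\chi, \det_n(g)) = 0$ in $\Br \QQ$ if and only if $\det_n(g) \in \Norm_{K/\QQ}(K^\times)$; as $\det_n(g) \neq 0$, this is equivalent to solvability of $\Norm_{K/\QQ}(x) = \det_n(g)$. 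Thus $U(\QQ)_b$ is precisely the set appearing in the statement, and it is non-empty because the identity matrix lies in it. Feeding this $b$ into Theorem~\ref{thm:PGL} then gives the claimed asymptotic with exponent $1 - 1/|b| = 1 - 1/d$, and we may take $c_{n,K,H} := c_{n,b,H}$.

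Being a corollary, there is no genuine obstacle here: all of the analytic content is in Theorem~\ref{thm:PGL}, and the remainder is the formal manipulation above. The only point needing a modicum of care is the order computation in the second step — checking that $\chi$ does not lose order upon restriction to the function field of the determinant hypersurface — which comes down to the classical fact that the locus of singular matrices is an irreducible hypersurface.
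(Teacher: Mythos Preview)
Your proof is correct and follows the same route as the paper: exhibit the cyclic algebra $(\det_n,K/\QQ)$ as a Brauer class on $U$, check it has order $d$ and that its zero-locus is the norm condition, note the identity matrix lies in it, and invoke Theorem~\ref{thm:PGL}. You supply more detail than the paper (the purity check, the residue computation along $\{\det_n=0\}$ using geometric irreducibility), but the strategy is identical.
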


The determinant of $g \in \PGL_n(\QQ)$ is a well-defined element $\det_n(g) \in \QQ^*/\QQ^{*n}$; in particular,
as $d \mid n$, asking whether $\det_n(g)$ is a norm from $K/\QQ$ is a well-posed question. 
One proves Corollary \ref{cor:cyclic} by applying Theorem \ref{thm:PGL} to the  element $b \in \Br \PGL_n$ given by the cyclic
algebra
$(\det_n, K/\QQ).$
This has order $d$, and the zero-locus consists of exactly
those $g \in \PGL_n(\QQ)$ for which $\det_n(g)$ is a norm from $K/\QQ$. Given that the identity matrix lies in $\PGL_n(\QQ)_b$,
one easily deduces Corollary \ref{cor:cyclic} from Theorem \ref{thm:PGL}. 

In the special case $K=\QQ(i)$, 
Corollary \ref{cor:cyclic} gives an asymptotic formula for the number of matrices whose determinant is a sum of two squares. This may  be viewed as an analogue for the determinant of the classical theorem of Landau \cite{Lan08} concerning the number of integers which can be written as a sum of two squares.

Our results for projective space are proved using more general results for zero-loci of Brauer group
elements on \emph{wonderful compactifications of adjoint semi-simple algebraic groups}. The wonderful compactification of an adjoint semi-simple algebraic group $G$ is a particularly nice bi-equivariant compactification of $G$, first introduced over algebraically closed fields by de Concini and Procesi in \cite{DCP83}. Favourable properties are that the boundary divisor is geometrically a strict normal crossing divisor, and that the effective cone of divisors is freely generated by the boundary divisors. We recall its construction and basic properties in \S\ref{sec:wonderful}. For technical reasons we work with height functions associated with a choice of \emph{smooth} adelic metric on the anticanonical bundle (see \S\ref{sec:heights} for details; the $(n+1)$th power of \eqref{def:height_Pn} on $\PP^n_\QQ$ is such a height function). Our result is as follows.

\begin{theorem} \label{thm:semi-simple}
	Let $G$ be an adjoint semi-simple algebraic group over a number field $k$ with wonderful compactification $G \subset X$.
	Let $H$ be a height function associated with a choice of smooth adelic metric on the anticanonical
	bundle of $X$.
	Let $\br \subset \Br_1 G$ be a finite subgroup of algebraic Brauer group elements. 
	Assume that $G(k)_\br \neq \emptyset$. Then there exists $c_{X,\br,H} > 0$ such that
	$$N(G,H,\br,B) \sim c_{X,\br,H} B \frac{(\log B)^{\rho(X)-1}}{(\log B)^{\Delta_X(\br)}},
	 \quad \text{as } B \to \infty,$$
   	where 
   	$$\Delta_X(\br)=\sum_{D \in X^{(1)}}\left(1 - \frac{1}{|\res_D(\br)|}\right),
   	\qquad \rho(X)=\rank \Pic X.$$
\end{theorem}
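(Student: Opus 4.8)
The plan is to encode the count in the Dirichlet series
\[
	Z_\br(s) \;=\; \sum_{g \in G(k)_\br} H(g)^{-s},
\]
to realise $Z_\br$ as a value of a function on $G(k)\backslash G(\Adele_k)$ amenable to harmonic analysis, and to extract the asymptotic from a Tauberian theorem allowing poles of fractional order. The first step is to linearise the Brauer condition. Fix a finite generating set $b_1,\dots,b_r$ of $\br$. For $g\in G(k)$ one has $g\in G(k)_\br$ if and only if $\inv_v\bigl(b_i(g_v)\bigr)=0$ for every place $v$ of $k$ and every $i$, by the fundamental exact sequence of global class field theory; applying Fourier inversion on the finite abelian groups in which the local invariants lie, one obtains $\mathbf 1_{G(k)_\br}(g)=\prod_v\Phi_v(g_v)$, where each $\Phi_v\colon G(k_v)\to\CC$ is bounded, is built out of the local evaluation maps $g_v\mapsto\inv_v(b_i(g_v))$, and equals $1$ on $G(\OO_v)$ for all but finitely many $v$. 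Hence $\Phi=\prod_v\Phi_v$ is a bounded continuous function on $G(\Adele_k)$, and the $\Phi$-weighted height zeta function
\[
	Z_\br(s,g)\;=\;\sum_{\gamma\in G(k)}\Phi(\gamma g)\,H(\gamma g)^{-s},\qquad g\in G(\Adele_k),
\]
is left $G(k)$-invariant, satisfies $Z_\br(s,e)=Z_\br(s)$, converges absolutely for $\re s$ large, and in that range lies in $L^2\bigl(G(k)\backslash G(\Adele_k)\bigr)$.

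The second step is to run the spectral decomposition of $L^2\bigl(G(k)\backslash G(\Adele_k)\bigr)$, following the harmonic-analytic treatment of the ordinary height zeta function of a wonderful compactification, and to evaluate the result at $g=e$. The contribution of the trivial (constant) representation is the anticipated main term: unfolding, it is a positive multiple of $\int_{G(\Adele_k)}\Phi(g)H(g)^{-s}\,dg$, which factors as an Euler product $\prod_v\int_{G(k_v)}\Phi_v(g_v)H_v(g_v)^{-s}\,dg_v$ of local height integrals twisted by the local Brauer evaluations. To analyse it one uses the geometry of $X$ recalled in \S\ref{sec:wonderful} --- the boundary $X\setminus G$ is a strict normal crossings divisor $D_1\cup\dots\cup D_\ell$ and $\Eff(X)$ is freely generated by the $D_i$, so $\rho(X)=\ell$, while each $b_i\in\Br_1 G\subset\Br k(X)$ is unramified along the divisors of $G$ and so has residues only along the $D_i$ --- together with the smoothness of the chosen adelic metric, which makes the local integrals well behaved. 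Expanding each local factor about the boundary divisors and using the description of $\inv_v(b_i(g_v))$ for $g_v$ close to $D_i$ in terms of the residue $\res_{D_i}(b_i)$, one finds that the local factors attached to $D_i$ contribute a fractional power $1/|\res_{D_i}(\br)|$, so that the Euler product agrees, up to a factor holomorphic and non-vanishing at $s=1$, with $\prod_{i=1}^{\ell}\zeta_k(s)^{1/|\res_{D_i}(\br)|}$. This has a pole at $s=1$ of order $\sum_{i}1/|\res_{D_i}(\br)|=\rho(X)-\Delta_X(\br)$, fractional in general; and the hypothesis $G(k)_\br\neq\emptyset$ ensures that $G(k_v)_\br\neq\emptyset$ for every $v$, so that the local integrals are not identically zero near $s=1$ and the resulting leading constant is strictly positive.

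The third and most delicate step is to show that the remaining spectral contributions --- the non-trivial automorphic characters (if any), the remaining discrete spectrum, and the continuous Eisenstein spectrum --- are, on the half-plane $\re s\ge1$, holomorphic apart from a pole at $s=1$ of strictly smaller order, so that $Z_\br(s,e)$ continues meromorphically past $\re s=1$ with principal part at $s=1$ governed by the main term. This requires the meromorphic continuation and polynomial growth of the $\Phi$-weighted height zeta function paired against each spectral component, uniformly enough to be resummed, and relies on the standard analytic input of the spectral theory of automorphic forms on $G$: bounds towards the Ramanujan conjecture, the analytic theory of Eisenstein series, and rapid decay in the spectral parameters of the Fourier transforms of the smooth local height integrals. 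The one genuinely new ingredient relative to the untwisted case is that all of these estimates must be made uniform in the finite amount of extra ramification carried by $\Phi$; since $\Phi_v\equiv1$ away from a fixed finite set of places, this is only a finite additional check, but I expect it to be the main obstacle. Granting it, $Z_\br(s,e)$ has the required analytic shape, and a Tauberian theorem for Dirichlet series with a pole of fractional order $\rho(X)-\Delta_X(\br)$ at $s=1$ yields
\[
	N(G,H,\br,B)\;\sim\;c_{X,\br,H}\,B\,(\log B)^{\rho(X)-1-\Delta_X(\br)},\qquad B\to\infty,
\]
with $c_{X,\br,H}>0$, as claimed.
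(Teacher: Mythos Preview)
Your overall strategy matches the paper's: the adelic indicator $\Phi=\prod_v\Phi_v$ is exactly the paper's $\thorn=\prod_v\thorn_v$, the weighted height zeta function $Z_\br(s,g)$ is the same, and both arguments proceed via the spectral decomposition of $L^2(G(k)\backslash G(\Adele_k))$ followed by Delange's Tauberian theorem. There are, however, two places where your outline departs from what actually happens, the first a matter of emphasis and the second a genuine gap.

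First, when you build $\Phi_v$ by Fourier inversion on the group carrying the local invariants, you are implicitly using that $g_v\mapsto \inv_v(b(g_v))$ is a \emph{group homomorphism} $G(k_v)\to\QQ/\ZZ$. This is Sansuc's bilinearity result (Lemma~\ref{lem:bilinear}) and holds precisely because $b$ is \emph{algebraic}; it is what makes $\thorn_v$ a finite sum of characters of $G(k_v)$ and hence bi-invariant under an open compact $\K_v$ (Lemma~\ref{lem:K-invariant}), so that the spectral machinery engages at all. For transcendental elements this fails (Remark~\ref{rem:trans_bad}). You should make this step explicit rather than absorbing it into ``Fourier inversion''.

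Second, and more seriously, your claim that the non-trivial automorphic characters contribute at \emph{strictly smaller} order is wrong. Write $\R\subset(G(\Adele_k)/G(k))^\wedge$ for the group of automorphic characters attached to $\br$ by Corollary~\ref{cor:Br_adjoint}, so that $\thorn=|\R|^{-1}\sum_{\rho\in\R}\rho$. For any automorphic character $\chi$ the height integral $\widehat H(\br,s,\chi)$ is regularised by a product of partial $L$-functions $\prod_\alpha L_{\R_\alpha}(\chi_\alpha,s_\alpha-\kappa_\alpha)$ (Theorem~\ref{thm:characters}), and $L_{\R_\alpha}(\chi_\alpha,s)$ has the \emph{same} branch-point order $-1/|\R_\alpha|$ at $s=1$ whenever $\chi_\alpha\in\R_\alpha$ (Lemma~\ref{lem:partial}). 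In particular every non-trivial $\chi\in\R$, and more generally every $\chi$ in the subordinate group $\mathfrak{X}_{\K_0}(-K_X)=\{\chi:\chi_\alpha\in\R_\alpha\text{ for all }\alpha\}$, has the maximal singularity order $\rho(X)-\Delta_X(\br)$. Thus the leading term is a sum over all of $\mathfrak{X}_{\K_0}(-K_X)$, not just the trivial character, and one must rule out cancellation. The paper does this in the proof of Theorem~\ref{thm:HZF}: by character orthogonality the sum becomes a positive multiple of an integral over a smaller adelic zero-locus, which is then bounded below by the (manifestly positive) trivial-character integral for the enlarged Brauer group associated to $\mathfrak{X}_{\K_0}(-K_X)$. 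Your positivity argument, resting solely on the trivial character, does not address this and would give the wrong leading constant in any case.
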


Here $\Br_1 G = \ker( \Br G \to \Br \overline{G})$ denotes the \emph{algebraic Brauer group} of $G$, where $\overline{G}$ is the base change of $G$ to the algebraic closure.
Theorem~\ref{thm:semi-simple} is proved using a similar strategy to the proof of Manin's conjecture for wonderful compactifications  \cite{JAMS} (this corresponds to the case $\br = 0$ in Theorem~\ref{thm:semi-simple}). We study the analytic properties of the associated height zeta function
$$\sum_{g \in G(k)_\br} \frac{1}{H(g)^s}$$
using harmonic analysis, namely the spectral theory of automorphic forms. This yields a spectral decomposition into parts coming from cuspidal, continuous, and $1$-dimensional automorphic representations (i.e.~automorphic characters of $G$). The leading singularity comes from the automorphic characters. The resulting height integrals are not meromorphic in general, but have \emph{branch point singularities}; this is reflected in the fact that the exponent of $\log B$ in Theorem \ref{thm:semi-simple} is a non-integral rational number, in general. Our proof has many parallels to the case of anisotropic toric varieties \cite{Lou13}.

The harmonic analysis approach is not  suited to transcendental (i.e.~non-algebraic) Brauer group elements
(see Remark \ref{rem:trans_bad}). This hypothesis was also necessary in the case of anisotropic tori
\cite{Lou13} (note however, contrary to \emph{loc.~cit.}, we are able to deal with arbitrary
adjoint groups, rather than just anisotropic groups).

For an absolutely simple adjoint semi-simple algebraic group $G$, the transcendental Brauer group is non-trivial only if $G$
has type $D_{2n}$ (Remark \ref{rem:abs_simple}). In particular, Theorem~\ref{thm:semi-simple}
gives a complete answer to Serre's question for most such $G$ (e.g.~for $\PGL_n$).

Many of the results in the literature on Serre's problem (e.g.~\cite{Hoo93}, \cite{Hoo07}, and \cite{Sof16}) only give lower bounds for the counting problem. On the other hand, in Theorem \ref{thm:semi-simple} we are able to obtain a precise asymptotic formula for the counting problem. We also obtain in Theorem \ref{thm:leading_constant} an explicit expression for the leading constant $c_{X,\br,H}$ in terms of a certain Tamagawa measure.

Note that Theorem \ref{thm:PGL} does not follow immediately from Theorem~\ref{thm:semi-simple},
as the given compactification $\PGL_n \subset \PP^{n^2-1}$ is not wonderful when $n>2$ (the boundary divisor is geometrically irreducible but singular, hence not strict normal crossings). We prove Theorem~\ref{thm:PGL} using the explicit construction of the wonderful compactification of $\PGL_n$ as a blow up $X \to \PP^{n^2-1}$, and then use the functoriality of heights to relate the counting problem to one on the wonderful compactification. To resolve the corresponding counting problem we require a more general version of Theorem \ref{thm:semi-simple} for counting rational points of bounded height with respect to height functions attached to arbitrary big line bundles. This is Theorem \ref{thm:arbitrarybiglinebundles}, which is the main theorem in this paper.

The layout of the paper is as follows. In \S \ref{sec:Brauer} we study Brauer groups of semi-simple algebraic groups. Our main result here is Theorem \ref{thm:Br_e(G)}, which gives a description of the Brauer group of a semi-simple algebraic group of a number field in terms of its automorphic characters. In \S \ref{sec:wonderful} we recall various properties of the wonderful compactification. In \S\ref{sec:main} we prove Theorem \ref{thm:semi-simple}, together with its generalisation to big line bundles (Theorem \ref{thm:arbitrarybiglinebundles}). In \S\ref{sec:PGLn} we prove Theorem \ref{thm:PGL} using Theorem \ref{thm:arbitrarybiglinebundles}.

\subsection{Notation and conventions} \label{sec:notation}
For a topological group $G$, we denote by $G^{\wedge} = \Hom(G, S^1)$ its group of continuous unitary characters and by $G^\sim = \Hom(G, \QQ/\ZZ)$ its group of continuous $\QQ/\ZZ$-characters.  We choose an embedding $\QQ/\ZZ \subset S^1$, so that 
$G^\sim \subset G^\wedge$. We commit the following abuse of notation: For
a closed (not necessarily normal) subgroup $H \subset G$, we denote by $(G/H)^\wedge$
the collection of characters of $G$ which are trivial on $H$. We use the notation $(G/H)^\sim$
analogously.

For a finite group $G$, we denote its order by $|G|$. For an element $g \in G$, we also denote by $|g|$ its order.

Let $D \subset \CC$ be a subset and $f:D \to \CC$. We say that $f$ is \emph{holomorphic} on $D$ if there exists an open subset $D \subset U \subset \CC$ and a holomorphic function $g: U \to \CC$ such that $g|_D = f$.

Let $U \subset \CC$ be a connected open subset and $a \in \CC$ a point on the boundary of $U$. Let $f: U \to \CC$ be holomorphic and let $q \in \{ q \in \RR: q \notin \ZZ_{\geq 0}\}$.
We say that $f$ admits a \emph{branch point singularity of order $q$ at $a$} 
if $\lim_{s \to a}(s-a)^{-q} f(s)$ exists and is non-zero.
Here we interpret $(s-a)^{-q}$ with its usual branch cut, namely as a holomorphic function on $\CC \setminus \{\sigma \in \RR: \sigma \leq a \}$.

All cohomology is taken with respect to the \'{e}tale topology. For a smooth variety $X$ over a field $k$, 
we denote by $\overline{X} = X \otimes_k \overline{k}$ the base-change to a fixed choice of algebraic closure $\overline{k}$ of $k$
and by $\Br X = \HH^2(X,\Gm)$ its  Brauer group. We denote by $ \Br_1 X = \ker( \Br X \to \Br \overline{X})$ the algebraic part of the Brauer group of $X$. We say that an  element of $\Br X$ is \emph{constant} if it lies in the image of the map $\Br k \to \Br X$ (this map need not be injective in general, however it is injective
if $X(k) \neq \emptyset$).
An element of $\Br X$ which does not lie in $\Br_1 X$ is called \emph{transcendental}.

Let $k$ be a number field. We denote by $\Val(k)$ the set of places of $k$ and $k_v$ the completion at a place $v$; if $v$ is non-archimedean $\OO_v$ denotes the ring of integers of $k_v$. Let $G$ be a linear algebraic group over $k$ or a $\Gal(\overline{k}/k)$-module. Then we denote by
$$\Sha(k,G) = \ker\left( \HH^1(k, G) \to \prod_{v \in \Val(k)} \HH^1(k, G)\right)$$
the Tate--Shafarevich set of $G$. (If $G$ is non-abelian we use non-abelian \v{C}ech cohomology \cite[p.~122]{Mil80}).

\subsection*{Acknowledgements}
We are very grateful to Martin Bright for help with the proof of Proposition \ref{prop:residue_restriction}, to Giuliano Gagliardi for useful discussions on wonderful compactifications, and to Alex Gorodnik and Yiannis Sakellaridis for helpful conversations regarding matrix coefficients. We  thank Dylon Chow who read a draft of the paper and made several helpful comments. We also thank the referee for many useful comments and suggested improvements to the exposition of the paper. Ramin Takloo-Bighash is partially funded by a Simons Foundation Collaborative Grant (Award number 245977).  Sho Tanimoto is partially supported by Lars Hesselholt's Niels Bohr Professorship and MEXT Japan, Leading Initiative for Excellent Young Researchers (LEADER).

\section{Brauer groups} \label{sec:Brauer}
In this section we gather results on the Brauer groups of semi-simple algebraic groups.

\subsection{Generalities}

Let $G$ be a semi-simple algebraic group over a field $k$ of characteristic $0$ with identity
element $e \in G(k)$.

\subsubsection{Calculating the Brauer group}
We shall be primarily  interested in the algebraic Brauer group $\Br_1 G$ of $G$. 
By translating by an element of $\Br k$, it suffices to study the subgroup
\begin{equation} \label{def:Br_e}
	\Br_e G = \{ b \in \Br_1 G : b(e) = 0 \in \Br k\}.
\end{equation}

\begin{lemma} \label{lem:Br}
	There is a functorial isomorphism $$\Br_e G \cong \mathrm{H}^1(k, \Pic \overline{G}).$$
	We also have an isomorphism
	$$\Br \overline{G} \cong \mathrm{H}^3(\Pic \overline{G},\ZZ).$$
	In particular $\Br \overline{G} = 0$ if and only if $\Pic \overline{G}$ is cyclic.
\end{lemma}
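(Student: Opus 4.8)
The plan is to deduce all three statements from the structure of $\overline{G}$ as a variety and the Hochschild–Serre spectral sequence. First I would recall the key geometric inputs for a semi-simple group: $\overline{G}$ has no non-constant invertible functions beyond scalars up to the character group, but since $G$ is semi-simple we in fact have $\overline{k}[\overline{G}]^* = \overline{k}^*$ (no non-trivial characters), and $\Pic \overline{G}$ is a finitely generated abelian group — indeed it is finite, isomorphic to the fundamental group of the root datum. With $\overline{k}[\overline{G}]^* / \overline{k}^* = 0$ in hand, the Hochschild–Serre spectral sequence $\HH^p(k, \HH^q(\overline{G},\Gm)) \Rightarrow \HH^{p+q}(G,\Gm)$ has low-degree exact sequence collapsing to give
$$0 \to \HH^1(k,\Pic\overline{G}) \to \ker(\Br G \to \Br\overline{G})/\mathrm{im}(\Br k) \to \HH^1(k,\mathrm{something}),$$
and the point is that the term $\HH^0(k,\Pic\overline{G})$ contributes nothing to the quotient by $\Br k$ because evaluation at $e \in G(k)$ splits off the constant part. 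More precisely, the section $e:\Spec k \to G$ splits the edge map, so $\Br G = \Br k \oplus \Br_e G$, and tracking through the spectral sequence identifies $\Br_e G$ with $\HH^1(k,\Pic\overline{G})$ (the next differential lands in $\HH^3(k,\Gm)$ but the relevant piece vanishes, or is killed, again using the $e$-splitting). Functoriality in $G$ is automatic since the spectral sequence and the splitting by $e$ are functorial.

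For the second isomorphism, I would use that $\overline{G}$ is (geometrically) a rational variety which is, moreover, homotopy-equivalent in the relevant sense to a product involving its maximal torus; concretely, the standard computation is that for a connected linear algebraic group over an algebraically closed field, $\Br\overline{G}$ fits into the picture governed by $\Pic\overline{G}$. The cleanest route is: $\overline{G}$ is obtained from the simply connected cover $\overline{G}^{sc}$ (which has trivial Picard group and trivial Brauer group, being a rational variety with $\HH^3$ of the relevant type vanishing) by quotienting by a finite central subgroup $\mu$ with $\mu^\vee \cong \Pic\overline{G}$; then a descent/Leray argument along $\overline{G}^{sc} \to \overline{G}$, a $\mu$-torsor, yields $\Br\overline{G} \cong \HH^3$ of the classifying object, which one computes as $\HH^3(\Pic\overline{G},\ZZ)$ (group cohomology of the finite abelian group $\Pic\overline{G}$ with trivial coefficients). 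I would cite the relevant literature here (the computation of Brauer groups of semi-simple groups is classical — e.g. work of Iversen, or the treatment in the references the paper already uses) rather than redo it.

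The final claim is then pure algebra: for a finite abelian group $A$, $\HH^3(A,\ZZ) = 0$ if and only if $A$ is cyclic. Indeed $\HH^3(A,\ZZ) \cong \HH^2(A,\QQ/\ZZ) \cong \HH_1(A,\ZZ)^\vee$-type dualities give $\HH^3(A,\ZZ) \cong A \wedge A$ (the exterior square), via the standard identification $\HH^2(A,\QQ/\ZZ) \cong \mathrm{Hom}(\wedge^2 A, \QQ/\ZZ)$ together with $\HH^3(A,\ZZ)\cong\HH^2(A,\QQ/\ZZ)$; and $\wedge^2 A = 0$ exactly when $A$ is cyclic. So $\Br\overline{G} = 0 \iff \Pic\overline{G}$ cyclic. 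The main obstacle I anticipate is getting the second isomorphism clean: one must be careful that the $\mu$-torsor argument genuinely produces $\HH^3(\Pic\overline{G},\ZZ)$ with the correct (trivial) action and no stray contributions from $\Pic$ or units of $\overline{G}^{sc}$ — this is where invoking that $\overline{G}^{sc}$ is a rational variety with $\Pic\overline{G}^{sc}=0$ and $\Br\overline{G}^{sc}=0$ is essential, and I would lean on existing references for that vanishing rather than prove it from scratch.
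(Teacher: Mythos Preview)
Your proposal is correct and takes essentially the same approach as the paper: the paper's proof consists entirely of citations to Sansuc \cite[Lem.~6.9(iii)]{San81} (Hochschild--Serre, exactly as you sketch) for the first isomorphism and to Iversen \cite[Cor.~4.6, Cor.~4.3]{Ive76} for the second isomorphism and the cyclic criterion. Your write-up is in fact more informative than the paper's on the last point, since you spell out the identification $\HH^3(A,\ZZ)\cong\HH^2(A,\QQ/\ZZ)\cong\Hom(\wedge^2 A,\QQ/\ZZ)$ rather than simply citing Iversen.
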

\begin{proof}
	The first part is \cite[Lem.~6.9(iii)]{San81}. This is proved using the Hochschild-Serre spectral sequence; the functoriality
	of the isomorphism therefore follows from the functoriality of this spectral sequence. The second part is a result of Iversen \cite[Cor.~4.6]{Ive76} (see also \cite[Rem.~4.7]{Ive76}). For the last part, see \cite[Cor.~4.3]{Ive76}. 	
\end{proof}

Note that this result implies that the adjoint case is in many respect the most interesting case 
(e.g.~simply connected semi-simple algebraic groups have constant Brauer group).

\begin{remark} \label{rem:abs_simple}
Let $G$ be a split adjoint absolutely simple semi-simple algebraic group. Then we have the following possibilities
for the Picard group of $G$ (this can be obtained from \cite[Tab.~9.2.]{MT11}).
\begin{table}[ht]
\centering
$\begin{array}{|l|l|l|l|l|l|}
	\hline
	\textrm{Type} & A_n \,(n \geq 1) & B_n,C_n, E_7 \,(n \geq 2) & D_{2n+1} \, (n \geq 1) & D_{2n} \, (n \geq 2)  & G_2,F_4,E_8 \\		
	\hline
	\Pic G & \ZZ/(n+1)\ZZ & \ZZ/2\ZZ & \ZZ/4\ZZ & \ZZ/2\ZZ \times \ZZ/2\ZZ & 0 \\ \hline	
\end{array}$
\end{table} 

\noindent Using Lemma \ref{lem:Br}, one finds that only groups of type 
$D_{2n}$ may have non-trivial transcendental Brauer group, 
that groups of type $G_2,F_4,E_8$ have constant Brauer group, and that the remaining types have non-constant Brauer group in general. Of course more possibilities arise for non-simple groups, as 
$\Pic(G_1 \times G_2) = \Pic(G_1) \times \Pic(G_2)$.
\end{remark}

\subsubsection{The Brauer pairing}
Recall that one can evaluate an element $b$ of the Brauer group of a variety $X$ over $k$
at a rational point $x \in X(k)$ to obtain an element $b(x) \in \Br k$.
We focus on algebraic Brauer groups, as in this case the induced pairing is much better behaved on $G$, due 
to the following result of Sansuc \cite[Lem.~6.9(i)]{San81}.

\begin{lemma} \label{lem:bilinear}
	The pairing
	$$\Br_e G \times G(k) \to \Br k, \quad (b,g) \mapsto b(g)$$
	is bilinear.
\end{lemma}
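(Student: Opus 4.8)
The plan is to reduce the assertion to an elementary statement about Picard groups via the isomorphism of Lemma~\ref{lem:Br}. Linearity in the first variable is immediate: for a fixed $g \in G(k)$, the map $b \mapsto b(g)$ is the pullback homomorphism $\Br G \to \Br k$ along the point $\Spec k \to G$ corresponding to $g$, hence a group homomorphism. So the only content is additivity in the second variable, namely the identity $b(g_1 g_2) = b(g_1) + b(g_2)$ for all $b \in \Br_e G$ and $g_1, g_2 \in G(k)$.

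To prove this, introduce the multiplication morphism $m : G \times G \to G$ and the two projections $p_1, p_2 : G \times G \to G$. The key claim is that in $\Br(G \times G)$ one has $m^* b = p_1^* b + p_2^* b$. Granting this, one evaluates both sides at the $k$-point $(g_1, g_2) \in (G\times G)(k)$: since $m(g_1, g_2) = g_1 g_2$ and $p_i(g_1, g_2) = g_i$, functoriality of the evaluation pairing (pullback is compatible with evaluation at rational points) yields $b(g_1 g_2) = b(g_1) + b(g_2)$, as required.

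It remains to establish the claimed identity, and here Lemma~\ref{lem:Br} enters. Since $G$ and $G \times G$ are both semi-simple, applying the first isomorphism of Lemma~\ref{lem:Br} at the identity elements $e$ and $(e,e)$ and using its functoriality reduces the claim to the identity $m^* = p_1^* + p_2^*$ for the induced pullback maps $\mathrm{H}^1(k, \Pic \overline{G}) \to \mathrm{H}^1(k, \Pic(\overline{G} \times \overline{G}))$, which in turn follows from the identity $m^* L \cong p_1^* L \otimes p_2^* L$ in $\Pic(\overline{G} \times \overline{G})$ for every $L \in \Pic \overline{G}$. Now $\overline{G}$ is a connected semi-simple group, so its character group is trivial and it has no non-constant invertible regular functions; a standard seesaw/K\"unneth argument (using that $\overline{G}$ has a rational point) then shows that restriction to the two "axes" $\overline{G} \times \{e\}$ and $\{e\} \times \overline{G}$ induces a Galois-equivariant isomorphism $\Pic(\overline{G} \times \overline{G}) \cong \Pic \overline{G} \oplus \Pic \overline{G}$. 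Restricting $m^* L$ to $\overline{G} \times \{e\}$ recovers $L$, because $m$ restricted to this axis is the identity morphism of $\overline{G}$; the same holds on $\{e\} \times \overline{G}$. Hence $m^* L$ corresponds to $(L, L)$ under the decomposition, i.e. $m^* L \cong p_1^* L \otimes p_2^* L$, which completes the proof.

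The main point requiring care is the compatibility of the functorial isomorphism $\Br_e(-) \cong \mathrm{H}^1(k, \Pic \overline{(-)})$ with the three morphisms $m$, $p_1$, $p_2$ simultaneously — this is precisely the naturality asserted in Lemma~\ref{lem:Br}, stemming from functoriality of the Hochschild--Serre spectral sequence — together with checking that the K\"unneth splitting of $\Pic(\overline{G}\times\overline{G})$ is Galois-equivariant and is realised by restriction to the axes, so that the restriction computation above really does identify $m^*L$ with $(L,L)$. Everything else in the argument is formal.
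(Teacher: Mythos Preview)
Your argument is correct. The paper itself gives no proof at all: it simply cites Sansuc \cite[Lem.~6.9(i)]{San81}. Your route --- reduce to $m^*b = p_1^*b + p_2^*b$ in $\Br_{(e,e)}(G\times G)$, pass to $\HH^1(k,\Pic(\overline{G}\times\overline{G}))$ via the functorial isomorphism of Lemma~\ref{lem:Br}, and then check $m^*L \cong p_1^*L\otimes p_2^*L$ on Picard groups using the axis-restriction splitting $\Pic(\overline{G}\times\overline{G})\cong \Pic\overline{G}\oplus\Pic\overline{G}$ --- is precisely the standard argument, and is essentially what Sansuc does in the cited lemma. One small point worth making explicit: the K\"unneth splitting you invoke needs $\Pic^0\overline{G}=0$ (equivalently, that $\Pic\overline{G}$ is discrete), which holds here because $\Pic\overline{G}$ is finite for a semi-simple group; without this the restriction-to-axes map need not be an isomorphism.
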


\subsubsection{Residues} \label{sec:residues}
Let $G \subset X$ be a smooth projective compactification of $G$ with boundary divisor $D = X \setminus G$. We have the exact sequence \cite[(9.0.2)]{San81}
\begin{equation} \label{eqn:Div}
	0 \to \Div_{\overline{D}}\overline{X} \to \Pic \overline{X} \to \Pic \overline{G} \to 0
\end{equation}
where $\Div_{\overline{D}}\overline{X}$ denotes the group of divisors of $\overline{X}$ which are supported on $D$. Applying Galois cohomology,
we obtain the map
$$
	\HH^1(k, \Pic \overline{G}) \to \HH^2(k, \Div_{\overline{D}}\overline{X}).
$$
However, the group $\Div_{\overline{D}}\overline{X}$ is a permutation module. In particular, applying Shapiro's lemma and using Lemma \ref{lem:Br}
we obtain a map
\begin{equation} \label{eqn:residue}
	\res: \Br_e G \to \hspace{-10pt} \prod_{\alpha \in X^{(1)} \cap D} \hspace{-10pt} \HH^1(k_\alpha, \QQ/\ZZ),
\end{equation}
where $k_\alpha$ denotes the algebraic closure of $k$ in the residue field of $\alpha$. A result of Sansuc \cite[Lem.~9.1]{San81}
states that the maps from \eqref{eqn:residue} agree with the usual residue maps $\Br_e G \to \HH^1(k_\alpha, \QQ/\ZZ)$ \cite[\S 6.8]{Poo17}, up to sign.
We take \eqref{eqn:residue} to be the \emph{definition} of the residue map attached to $G \subset X$. This possible sign will not cause problems, as ultimately in Theorem \ref{thm:semi-simple} we only care about the subgroup generated by the residues of  $\br$, which is unaffected by a sign change.

\subsubsection{Inner forms and transfers} \label{sec:transfer}

Denote by $\Inn G$ the inner automorphism group of $G$; this is naturally an adjoint semi-simple algebraic group over $k$ which acts on $G$ by conjugation. We can twist by (a $1$-cocycle representing) an  element of $\HH^1(k, \Inn G)$ to obtain a so-called \emph{inner form} of $G$. There is a unique (up to isomorphism) quasi-split inner form of $G$; we denote this by $G'$.

To assist with the harmonic analysis, we will need to perform a transfer between the Brauer group of $G$ and $G'$. Recall from Lemma~\ref{lem:Br} that we have a canonical
isomorphism
$$\Br_e G \cong \HH^1(k, \Pic \overline{G}).$$
However $G$ and $G'$ differ by an inner twist, and inner automorphisms act trivially on the Picard group as $\Inn G$ is connected.
It follows that there is a canonical
isomorphism
\begin{equation} \label{eqn:Br_transfer}
	\tr: \Br_e G \to \Br_e G',
\end{equation}
which we call the \emph{transfer map}.
We show that this is compatible with taking residue maps with respect to \emph{bi-equivariant compactifications},
i.e.~those compactifications $G \subset X$ for which both the natural left and right action of $G$ on itself extends to $X$. Note that $\Inn G$ naturally acts on such a compactification, so we can twist by the relevant cohomology class to obtain a natural bi-equivariant compactifiction $G' \subset X'$.
	
\begin{lemma} \label{lem:transfer_residues}
	Let $G \subset X$ be a smooth projective bi-equivariant compactification of $G$ with boundary divisor $D$.
	Let $G'$ be the quasi-split inner form of $G$ with associated bi-equivariant compactification 
	$G' \subset X'$ and boundary divisor $D'$. Then the transfer for the Brauer group respects the residue maps, i.e.~we have a commutative diagram
	$$
	\xymatrix{
		 \Br_e G \ar[d] \ar[r] & \Br_e G' \ar[d] \\
		 \prod_{\alpha \in X^{(1)} \cap D} \HH^1(k_\alpha, \QQ/\ZZ) \ar[r] & \prod_{\alpha' \in X^{'(1)} \cap D'} \HH^1(k_{\alpha'}, \QQ/\ZZ).}
	$$
\end{lemma}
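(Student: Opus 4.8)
\textit{Proof proposal.}
The plan is to exhibit both vertical residue maps and the transfer $\tr$ of \eqref{eqn:Br_transfer} as the output of a single functorial recipe applied to the short exact sequence \eqref{eqn:Div}, and then to check that this sequence --- regarded as a sequence of Galois modules, together with the indexing of its permutation module by boundary components --- is left unchanged by the twist carrying $G \subset X$ to $G' \subset X'$. Fix a cocycle $\sigma \in Z^1(k, \Inn G)$ representing the class that defines $G'$; since $X$ is bi-equivariant, conjugation by $G$ extends to $X$ and factors through $\Inn G$, so $G' \subset X'$ is precisely the twist of $G \subset X$ by $\sigma$ under this action. For any invariant $F$ attached functorially (and compatibly with the Galois action) to smooth projective $\overline{k}$-varieties, or to pairs $(\overline{X},\overline{D})$ --- e.g.\ $\Pic \overline{X}$ or $\Div_{\overline{D}}\overline{X}$ --- twisting by $\sigma$ yields a canonical identification of underlying abelian groups $F(\overline{X'}) = F(\overline{X})$ under which the two Galois actions differ, at $\gamma \in \Gal(\overline{k}/k)$, by the automorphism induced by $\sigma_\gamma$; in particular, if $\Inn G$ acts trivially on $F(\overline{X})$, this is an isomorphism of Galois modules.

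I would then record three triviality statements. First, $\overline{X}$ is a smooth projective rational variety, being a compactification of the rational variety $\overline{G}$, so $\Pic \overline{X}$ is finitely generated with $\Pic^0 \overline{X} = 0$; since $\Inn G$ is connected, the line bundles $\{g^*L\}_{g \in \Inn G}$ form an algebraically trivial family, hence $g^*L \cong L$, i.e.\ $\Inn G$ acts trivially on $\Pic \overline{X}$. Second, $\Inn G$ preserves the boundary $D = X \setminus G$ and, being connected, fixes each geometric irreducible component of $\overline{D}$; thus it acts trivially on the permutation module $\Div_{\overline{D}}\overline{X}$, and the twist induces a $\Gal$-equivariant bijection $X^{(1)} \cap D \leftrightarrow X^{'(1)} \cap D'$ identifying the residue fields $k_\alpha \cong k_{\alpha'}$. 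Third, $\Inn G$ acts trivially on the finite group $\Pic \overline{G}$ --- the observation already used in \S\ref{sec:transfer} to define $\tr$. Together these show that the exact sequence \eqref{eqn:Div} for $X'$ is canonically isomorphic, as a short exact sequence of Galois modules with the stated indexing data, to the one for $X$.

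To conclude, I would unwind the construction of \S\ref{sec:residues}: the residue map \eqref{eqn:residue} for $G \subset X$ is the composite of the isomorphism $\Br_e G \cong \HH^1(k, \Pic \overline{G})$ of Lemma \ref{lem:Br}, the connecting map $\HH^1(k, \Pic \overline{G}) \to \HH^2(k, \Div_{\overline{D}}\overline{X})$ of \eqref{eqn:Div}, Shapiro's lemma, and the isomorphisms $\HH^2(k_\alpha, \ZZ) \cong \HH^1(k_\alpha, \QQ/\ZZ)$ --- each natural in \eqref{eqn:Div} and in its indexing data --- and likewise for $G' \subset X'$. Since $\tr$ is by definition the map on $\HH^1$ induced by $\Pic \overline{G} \cong \Pic \overline{G'}$, sandwiched between the isomorphisms of Lemma \ref{lem:Br}, feeding the identification of the two copies of \eqref{eqn:Div} through this recipe produces the asserted commutative square. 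The main obstacle is the bookkeeping in this last step: one must check that the bijection $X^{(1)} \cap D \leftrightarrow X^{'(1)} \cap D'$ produced by the twist is exactly the one making the bottom horizontal arrow of the diagram the identity on each factor after the canonical identification $k_\alpha \cong k_{\alpha'}$, and that the functor-level principle ``twisting by a cocycle valued in a group acting trivially on an invariant leaves that invariant a fixed Galois module'' is invoked for precisely the functors $\Pic$ and $\Div_{\overline{D}}(-)$ appearing in \eqref{eqn:Div}; granting the naturality of the isomorphism of Lemma \ref{lem:Br}, no further input is required.
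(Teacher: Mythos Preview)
Your proposal is correct and follows essentially the same route as the paper: both argue that, since $\Inn G$ is connected, it acts trivially on the discrete invariants $\Div_{\overline{D}}\overline{X}$, $\Pic \overline{X}$, and $\Pic \overline{G}$, so the two copies of the exact sequence \eqref{eqn:Div} are canonically isomorphic as Galois modules, whence the commutativity follows from Sansuc's description \eqref{eqn:residue} of the residue map. Your write-up is considerably more detailed --- in particular you spell out why $\Inn G$ acts trivially on $\Pic \overline{X}$ via $\Pic^0 \overline{X}=0$, and you track the bijection on boundary components and residue fields explicitly --- but the underlying idea is identical.
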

\begin{proof}
	As $\Inn G$ is connected it 
	acts trivially on the boundary divisors and on $\Pic \overline{G}$. We thus find that
	$$0 \to \Div_{\overline{D}}\overline{X} \to \Pic \overline{X} \to \Pic \overline{G} \to 0 $$
	and
	$$0 \to \Div_{\overline{D}'}\overline{X}' \to \Pic \overline{X}' \to \Pic \overline{G}' \to 0 $$
	are canonically isomorphic as exact sequence of Galois modules. The result then follows from Sansuc's
	description of the residue map given in \eqref{eqn:residue}.
\end{proof}

\subsection{Brauer groups over number fields}
Let now $G$ be a semi-simple algebraic group over a number field $k$. 
We fix an integral model of $G$ over $\mathcal O_k$ where $\mathcal O_k$ is the ring of algebraic integers for $k$.
Taking the sum of the local pairings over each completion $k_v$ of $k$,
combined with the embeddings $\Br k_v \subset \QQ/\ZZ$,
we obtain a global Brauer pairing
\begin{equation} \label{def:global_Brauer_pairing}
	\Br G \times G(\Adele_k) \to \QQ/\ZZ,
\end{equation}
which is right continuous \cite[Cor.~8.2.11]{Poo17}.

By an \emph{automorphic character} of $G$,
we mean a continuous homomorphism $G(\Adele_k) \to S^1$ which is trivial on $G(k)$. For such a character $\chi$, we denote by $\chi_v$ its local component at a place $v$.  Following
our conventions from \S \ref{sec:notation}, we denote the group of automorphic characters of $G$ by
$(G(\Adele_k) / G(k))^\wedge$. Note that there is little difference between this and the group of $\QQ/\ZZ$-characters $(G(\Adele_k) / G(k))^\sim$, as every element of $(G(\Adele_k) / G(k))^\wedge$ has finite order (in fact there exists $n \in \NN$, depending only on $G$, such that $\chi^n = 1$ for all $\chi \in (G(\Adele_k) / G(k))^\wedge$, cf.~\cite[Rem.~2.2]{JAMS}).

\begin{lemma} \label{lem:Br_char}
	Let $b \in \Br_e G$.
	The map
	$$G(\Adele_k) \to \QQ/\ZZ \subset S^1,$$
	induced by pairing with $b$, is an automorphic character of $G$.
\end{lemma}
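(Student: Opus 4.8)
The plan is to unwind the three defining conditions of "automorphic character" — that the map lands in $S^1$, that it is continuous, and that it is trivial on $G(k)$ — and verify each using the structural results already established. First I would recall that the global Brauer pairing \eqref{def:global_Brauer_pairing} is valued in $\QQ/\ZZ$, and that under our fixed embedding $\QQ/\ZZ \subset S^1$ this gives a map to $S^1$; so the only content is continuity and triviality on $G(k)$.

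For triviality on $G(k)$, I would invoke the fundamental exact sequence of class field theory for the Brauer group: for any $b \in \Br G$ and any $x \in G(k)$, the class $b(x) \in \Br k$ has total sum of local invariants equal to zero, i.e.\ $\sum_v \inv_v(b(x)) = 0$ in $\QQ/\ZZ$. Since the value of the pairing \eqref{def:global_Brauer_pairing} at $(b,x)$ for $x \in G(k) \subset G(\Adele_k)$ (embedded diagonally) is precisely $\sum_v \inv_v(b(x))$, it vanishes; hence the induced map is trivial on $G(k)$. Here it is worth noting we do not even need $b \in \Br_e G$ for this — it holds for any $b \in \Br G$.

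For continuity, I would use the already-cited fact \cite[Cor.~8.2.11]{Poo17} that the global Brauer pairing is right continuous, which immediately gives continuity of the map $g \mapsto b(g)$ in the adelic topology; combined with the previous paragraph this shows the map is a continuous homomorphism $G(\Adele_k) \to S^1$ trivial on $G(k)$, provided we know it is a homomorphism. Homomorphy is exactly where the hypothesis $b \in \Br_e G$ enters: by Lemma~\ref{lem:bilinear}, the local pairing $\Br_e G_v \times G(k_v) \to \Br k_v$ is bilinear, so $g \mapsto b(g)$ is a homomorphism on each $G(k_v)$, and summing over $v$ (which makes sense as $b(g_v)$ is trivial for all but finitely many $v$, using the fixed integral model and the fact that $b$ extends to the model away from a finite set of places) shows the global map is a homomorphism $G(\Adele_k) \to \QQ/\ZZ$. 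Putting the three pieces together yields that the map is an automorphic character of $G$.

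The step I expect to require the most care is the bookkeeping behind "homomorphy plus well-definedness of the infinite sum": one must check that $b(g_v) = 0$ for almost all $v$ when $g = (g_v)_v \in G(\Adele_k)$, which follows because $b$ spreads out to an element of $\Br \mathcal{G}$ for some integral model $\mathcal{G}/\mathcal{O}_{k,S}$, and for $v \notin S$ with $g_v \in \mathcal{G}(\mathcal{O}_v)$ one has $b(g_v) \in \Br \mathcal{O}_v = 0$. All other steps are direct appeals to Lemma~\ref{lem:bilinear}, \cite[Cor.~8.2.11]{Poo17}, and the reciprocity law; no genuinely new argument is needed, which is consistent with this being a preparatory lemma.
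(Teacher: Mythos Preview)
Your proposal is correct and follows essentially the same approach as the paper: continuity from the right continuity of the Brauer pairing, the homomorphism property from Lemma~\ref{lem:bilinear}, and triviality on $G(k)$ from the reciprocity law of class field theory. You add a bit more detail on the spreading-out argument ensuring the global sum is finite, but this is already implicit in the well-definedness of the global pairing \eqref{def:global_Brauer_pairing}.
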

\begin{proof}
	That it is continuous follows from the continuity of the Brauer pairing. 
	That the induced map is a homomorphism follows from Lemma \ref{lem:bilinear}. 
	Its triviality on  $G(k)$ follows from the exact sequence 
	\begin{equation} \label{seq:CFT}
		0 \to \Br k \to \, \prod_{\mathclap{v \in \Val(k)}} \, \Br k_v \to \QQ/\ZZ \to 0
	\end{equation}
	from class field theory \cite[Thm.~1.5.36]{Poo17}.
\end{proof}

We use this to obtain the following, which is crucial for the harmonic analysis.
Let $\thorn_{\br,v}: G(k_v) \to \{0,1\}$ be the indicator function of the zero locus $G(k_v)_\br$
of $\br$ in $G(k_v)$.

\begin{lemma} \label{lem:K-invariant}
	Let $\br \subset \Br_e G$ be a finite subgroup and $v$ a place of $k$.
	\begin{enumerate}
		\item For any place $v$ of $k$, the indicator function $\thorn_{\br,v}$ is locally constant.
		\item For any nonarchimedean place $v$ of $k$, the indicator function $\thorn_{\br,v}$ 
		is bi-invariant under some compact open subgroup  $\K_v \subset G(k_v)$.
		Moreover, one may take $\K_v = G(\OO_v)$ for all but finitely many non-archimedean $v$.
	\end{enumerate}
\end{lemma}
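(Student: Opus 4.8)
The plan is to reduce everything to the continuity of the global Brauer pairing \eqref{def:global_Brauer_pairing} and the fact, established in Lemma~\ref{lem:Br_char}, that each $b \in \br$ induces an automorphic character of $G$, hence in particular a continuous function $G(\Adele_k) \to \QQ/\ZZ$ that factors through the local pairing at each place. Since $\thorn_{\br,v}$ is the indicator of the simultaneous vanishing locus of the finitely many local characters $g \mapsto b(g) \in \Br k_v \subset \QQ/\ZZ$ for $b \in \br$, it suffices to show that each such local map $b_v \colon G(k_v) \to \QQ/\ZZ$ is locally constant, and that for almost all $v$ it is trivial on $G(\OO_v)$ and, more generally for the remaining $v$, bi-invariant under a compact open subgroup.

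For part (1): the local evaluation map $g \mapsto b(g)$ on $G(k_v)$ is continuous (this is the local input to \cite[Cor.~8.2.11]{Poo17}, or one may argue directly that Brauer evaluation is continuous in the $v$-adic topology). Its target $\Br k_v$ is discrete — equal to $\QQ/\ZZ$ for $v$ finite, $\frac12\ZZ/\ZZ$ for $v$ real, and $0$ for $v$ complex — so a continuous map to it is locally constant. Taking the (finite) minimum of such functions over $b \in \br$, the set $G(k_v)_\br = \bigcap_{b \in \br} \ker(b_v)$ is open and closed, so its indicator $\thorn_{\br,v}$ is locally constant. For the archimedean places, local constancy combined with the target being $2$-torsion (or zero) makes this immediate; in fact for $v$ complex $\thorn_{\br,v} \equiv 1$.

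For part (2): fix a nonarchimedean $v$. By Lemma~\ref{lem:bilinear} the pairing $\Br_e G \times G(k_v) \to \Br k_v$ is bilinear, so $\ker(b_v)$ is a subgroup of $G(k_v)$; by part (1) it is open, hence (as $\br$ is finite) $G(k_v)_\br$ is an open subgroup of $G(k_v)$. An open subgroup of a locally compact group contains, and is a union of cosets of, any sufficiently small compact open subgroup; choosing a compact open $\K_v$ contained in $G(k_v)_\br$ we get that $\thorn_{\br,v}$ is both left- and right-invariant under $\K_v$ (indeed $\thorn_{\br,v}$ is the indicator of a union of double $\K_v$-cosets). For the "almost all $v$" statement, we use a standard spreading-out argument: fix an integral model of $G$ over $\OO_k$ and, for each of the finitely many $b \in \br$, a Brauer class on the generic fibre; this class extends to a class in $\Br \mathcal{G}$ over $\Spec \OO_k[1/N]$ for a suitable $N$, which by properness/specialisation of unramified classes evaluates trivially on $\mathcal{G}(\OO_v)$ for every $v \nmid N$ (equivalently: the residue of $b$ is unramified at such $v$, and $\mathcal{G}(\OO_v)$ reduces into the smooth locus, forcing $b(g) = 0$ for $g \in \mathcal{G}(\OO_v) = G(\OO_v)$). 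Hence for all but finitely many $v$ we may take $\K_v = G(\OO_v)$.

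The only genuine subtlety is the last point, and it is a soft one: one must ensure that for almost all $v$ the evaluation $b_v$ kills all of $G(\OO_v)$, not merely a proper compact open subgroup. This follows because the reduction map $G(\OO_v) \to G(\FF_v)$ is surjective with kernel a pro-$p$ group, $G(\FF_v)$ is generated by its $\FF_v$-points of the unipotent radicals of opposite Borels (for $v$ unramified in the splitting data of $G$), and $b$, being unramified at $v$ and trivial at $e$, is a homomorphism $G(k_v) \to \QQ/\ZZ$ whose restriction to these generators vanishes after reduction — but the cleanest formulation is simply that an unramified Brauer class on a smooth $\OO_v$-scheme is constant on $\OO_v$-points and takes the value $0$ when it vanishes at one such point, which here is $e$. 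I would cite the integral-model construction already fixed in \S\ref{sec:notation}/\S\ref{sec:Brauer} and invoke this directly.
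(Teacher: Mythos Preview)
Your proof is correct, but it takes a somewhat different route from the paper for part~(2). The paper passes through the automorphic character interpretation: it lets $\R$ be the group of automorphic characters attached to $\br$ via Lemma~\ref{lem:Br_char}, observes that $\thorn_{\br,v}$ is the indicator of $\bigcap_{\rho\in\R}\ker\rho_v$, and then records the character orthogonality identity
\[
\thorn_{\br,v}(g_v)=\frac{1}{|\R|}\sum_{\rho\in\R}\rho_v(g_v).
\]
Bi-invariance under some compact open $\K_v$ and the ``almost all $v$'' claim then follow in one line, since a continuous automorphic character on $G(\Adele_k)$ is trivial on $G(\OO_v)$ for all but finitely many $v$. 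By contrast, you argue directly that $G(k_v)_\br$ is an open subgroup (via bilinearity of the local pairing) and establish the ``almost all $v$'' statement by spreading out the Brauer classes over an integral model. Both arguments are valid; yours is more self-contained and avoids invoking the automorphic picture for this lemma, while the paper's has the advantage of simultaneously producing the identity \eqref{eqn:thorn_rho}, which is reused later in the regularisation of the height integrals (see the proof of Theorem~\ref{thm:characters}). Your final ``genuine subtlety'' paragraph is unnecessary: the spreading-out argument you already gave suffices, since a Brauer class extended over a smooth $\OO_v$-model evaluates on $\OO_v$-points into $\Br\OO_v=0$.
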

\begin{proof}
	Part (1) follows from the right continuity of the Brauer pairing $\Br G_v \times G(k_v) \to \QQ/\ZZ$.
	Let $\R$ be the group of automorphic characters attached to $\br$ by Lemma  \ref{lem:Br_char}. Note that $\thorn_{\br,v}$ is the indicator function of $\cap_{\rho \in \R} \ker \rho_v$. 
	Character orthogonality
	yields
	\begin{equation} \label{eqn:thorn_rho}
		\thorn_{\br,v}(g_v) = \frac{1}{|\R|}\sum_{\rho \in \R} \rho_v(g_v), \quad \mbox{for all } g_v \in G(k_v).
	\end{equation}
	As each $\rho_v$ has finite order, the existence of $\K_v$ easily follows from \eqref{eqn:thorn_rho}.
	Moreover, as each $\rho$ is an automorphic character, for all but finitely many $v$
	we see that $\rho_v$ is trivial	on $G(\OO_v)$, whence the result.
\end{proof}

\begin{remark} \label{rem:trans_bad}
	The conclusion of Lemma \ref{lem:K-invariant} fails for transcendental Brauer group elements in general.
	This is why we focus on the algebraic Brauer group in this paper; the harmonic analysis tools
	are not suited for transcendental Brauer group elements.
	
	Consider $G=\PGL_2 \times \PGL_2$ over $\QQ$ with the quaternion algebra
	$$b=(\det_2, \det_2)$$
	on $G$. A simple Hilbert symbol calculation shows that $\thorn_{b,p}$ is  bi-invariant under the image
	of $\SL_2(\ZZ_p) \times \SL_2(\ZZ_p)$ in $G(\QQ_p)$ for all odd primes $p$,
	but not bi-invariant under $\PGL_2(\ZZ_p) \times \PGL_2(\ZZ_p)$ for any prime $p$.
	A similar phenomenon occurred in the case of algebraic tori (see 
	\cite[Rem.~5.5]{Lou13}).
\end{remark}

\subsubsection{Calculating the Brauer group}
We now calculate the algebraic Brauer group over number fields. To do so, we require the following
theorem due to Chernousov, Harder and Kneser (see \cite[Thms.~6.4, 6.6]{PR94}).

\begin{theorem} \label{thm:Kneser}
	Let $G^{sc}$ be a simply connected semi-simple algebraic group over $k$. Then for non-archimedean $v$ we have
	$\HH^1(k_v, G^{sc}) = 0.$
	Moreover, the natural map
	$$ \HH^1(k, G^{sc}) \to \prod_{v \mid \infty} \HH^1(k_v,G^{sc})$$
	is a bijection.
\end{theorem}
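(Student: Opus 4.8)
The plan is to deduce the theorem from the structure theory of semi-simple algebraic groups together with deep arithmetic input, essentially reproducing the classical arguments of Kneser, Harder and Chernousov. The first reduction is to the case where $G^{sc}$ is absolutely almost simple. Indeed, any simply connected semi-simple $k$-group is a finite product of groups of the form $\Res_{\ell/k} H$ with $\ell/k$ a finite separable extension and $H$ absolutely almost simple and simply connected over $\ell$. Since Galois cohomology of linear algebraic groups commutes with finite products and, via Shapiro's lemma, with Weil restriction (and since the completions of $\ell$ above the archimedean places of $k$ are exactly the archimedean places of $\ell$), both assertions for $G^{sc}$ follow from the corresponding assertions for such $H$ over $\ell$ and over the completions $\ell_w$. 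So assume henceforth that $G^{sc}$ is absolutely almost simple.

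For the local vanishing $\HH^1(k_v,G^{sc})=0$ at a non-archimedean place $v$, I would argue by type. For the classical types one uses explicit models: an inner form of type $A_{n-1}$ is $\SL_1(D)$ for a central division $k_v$-algebra $D$, and $\HH^1(k_v,\SL_1(D)) \cong k_v^{*}/\mathrm{Nrd}(D^{*}) = 0$ since the reduced norm is surjective over a local field; the outer forms of type $A$ and the types $B,C,D$ reduce similarly to the classification of Hermitian, symplectic and quadratic forms over $k_v$, governed by local class field theory. For the exceptional types $G_2,F_4,E_6,E_7,E_8$ (and the trialitarian $D_4$) one invokes Bruhat--Tits theory: $G^{sc}(k_v)$ acts on its Bruhat--Tits building, the special fibres of the associated parahoric group schemes are connected reductive groups over the residue field $\FF_q$, Lang's theorem gives $\HH^1(\FF_q,-)=0$ for connected groups, and a descent argument along the contractible building upgrades this to $\HH^1(k_v,G^{sc})=0$; alternatively one cites the uniform treatment in Bruhat--Tits.

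For the global statement, observe that by the local vanishing just established the non-archimedean factors of $\prod_{v}\HH^1(k_v,G^{sc})$ are trivial, so the kernel of $\HH^1(k,G^{sc}) \to \prod_{v\mid\infty}\HH^1(k_v,G^{sc})$ is precisely $\Sha(k,G^{sc})$, and injectivity is the Hasse principle $\Sha(k,G^{sc})=1$. After the reduction this is proved type by type: for classical types it follows from Kneser's analysis, reducing to Hasse principles for norm-one tori and for quadratic and Hermitian forms (Hasse--Minkowski, the Hasse norm theorem and their generalisations); for $G_2,F_4,E_6,E_7$ and trialitarian $D_4$ it is due to Harder, and for $E_8$ to Chernousov, whose proof proceeds by intricate cohomological reductions (via $R$-equivalence and passage to subgroups of smaller rank). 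For surjectivity, the Borel--Harder real approximation theorem already gives that $\HH^1(k,G^{sc}) \to \prod_{v\text{ real}}\HH^1(k_v,G^{sc})$ is surjective for connected $k$-groups; since complex places contribute trivial cohomology, this is exactly surjectivity onto $\prod_{v\mid\infty}\HH^1(k_v,G^{sc})$.

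The genuinely hard input is the vanishing of $\Sha(k,G^{sc})$, above all for type $E_8$: there is no known argument avoiding the classification, and Chernousov's treatment of $E_8$ (completing Harder's programme) admits no real shortcut. The local vanishing in the exceptional types is also not elementary, resting on the full strength of Bruhat--Tits theory. The reduction steps and the classical cases, by contrast, are routine once the relevant Hasse principles for forms and tori are granted.
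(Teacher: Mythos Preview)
The paper does not prove this theorem at all: it is stated as a black-box input, attributed to Chernousov, Harder and Kneser, with a reference to \cite[Thms.~6.4, 6.6]{PR94}. Your proposal is a correct outline of precisely the classical argument found in that reference (reduction to absolutely almost simple factors via Weil restriction, the Kneser--Bruhat--Tits local vanishing, the Hasse principle of Kneser/Harder/Chernousov, and Borel--Harder for surjectivity), so there is nothing to compare; you have simply unpacked the citation.
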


We combine this with an application of Poitou--Tate duality to obtain the following.
An analogous result for algebraic tori can be found in \cite[Thm 4.5]{Lou13}.
In the statement of the theorem,  we denote by
$$\Be(G) = \ker\left(\Br_e G \to \, \prod_{\mathclap{v \in \Val(k)}} \, \Br_e G_v\right)$$
(here $\Be$ is the Cyrillic Be). It is well known that $\Be(G)$ is finite  (see e.g.~\cite[Prop.~9.8]{San81}).

\begin{theorem} \label{thm:Br_e(G)}
	The Brauer pairing
	$$\Br_e G \times G(\Adele_k) \to \QQ/\ZZ$$
	induces a short exact sequence 
	$$ 0 \to \Be(G) \to \Br_e G \to (G(\Adele_k) / G(k))^\sim \to 0.$$
\end{theorem}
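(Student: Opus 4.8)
The plan is to analyse the Brauer pairing $\Br_e G \times G(\Adele_k) \to \QQ/\ZZ$ in three steps: first identify the kernel on the left as $\Be(G)$; then show the induced map $\Br_e G \to (G(\Adele_k)/G(k))^\sim$ lands in that subgroup and is injective modulo $\Be(G)$; and finally, the crux, show surjectivity onto $(G(\Adele_k)/G(k))^\sim$. The left-triviality on $G(k)$ is already Lemma~\ref{lem:Br_char} (via the class field theory sequence \eqref{seq:CFT}), so every $b \in \Br_e G$ does give an automorphic $\QQ/\ZZ$-character. The kernel statement is essentially a formal consequence of the definition of $\Be(G)$ together with the non-degeneracy of the \emph{local} pairings $\Br_e G_v \times G(k_v) \to \Br k_v$: an element $b$ pairs trivially with all of $G(\Adele_k)$ iff each localisation $b_v$ pairs trivially with all of $G(k_v)$, and by local class field theory / Tate local duality applied to $\HH^1(k_v, \Pic \overline{G})$ (using Lemma~\ref{lem:Br}) this forces $b_v = 0$, i.e.\ $b \in \Be(G)$. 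So the exactness at $\Br_e G$ reduces to a local computation.

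For surjectivity I would pass through the quasi-split (indeed, after an inner twist, split) inner form using the transfer map $\tr : \Br_e G \to \Br_e G'$ of \eqref{eqn:Br_transfer}; since an inner twist does not change $G(\Adele_k)$-characters that are trivial on $G(k)$ in a way that matters for counting the image (one should check the pairing is compatible with the transfer, which follows from the cohomological description in \S\ref{sec:transfer} and a Poitou--Tate naturality argument), it is harmless to assume $G$ is quasi-split, hence $\HH^1(k, G^{sc})$ and $\HH^1(k_v, G^{sc})$ are as described by Theorem~\ref{thm:Kneser}. The main input is then the abelianised exact sequence $1 \to \mu \to G^{sc} \to G \to 1$ with $\mu$ the (finite, central) fundamental group: this gives $G(k)\backslash G(\Adele_k)$ a handle via $\HH^1(k,\mu)$ and $\HH^1(k_v,\mu)$, and the cokernel of $\prod_v G(k_v)$ inside $G(\Adele_k)$ modulo $G(k)$ is controlled by $\HH^1$ of $\mu$. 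Concretely, one expects
$$(G(\Adele_k)/G(k))^\sim \;\cong\; \coker\!\left(\HH^1(k,\mu) \to \prod_v \HH^1(k_v,\mu)\right)^{\!\sim}\!,$$
and the right-hand side is computed by Poitou--Tate duality to be $\HH^1(k, \mu^*)$-ish, which one matches up with $\HH^1(k, \Pic \overline{G}) = \Br_e G$ modulo $\Be(G)$ via the relation between $\Pic \overline{G}$ and the character group of $\mu$ (for $G$ adjoint, $\Pic \overline{G} \cong \widehat{\mu}$). Thus both sides of the desired isomorphism are identified with the same Poitou--Tate cohomology group, and the Brauer pairing is checked to realise this identification.

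The step I expect to be the main obstacle is the surjectivity, specifically making the dictionary between the Brauer-pairing description and the fundamental-group / Poitou--Tate description precise and sign-correct: one must relate the residue-theoretic pairing on $\Br_e G$ to the cup-product pairing $\HH^1(k,\widehat\mu) \times \HH^1(k,\mu) \to \QQ/\ZZ$, verify that the image of $\prod_v \HH^1(k_v, G^{sc})$ (which vanishes non-archimedeanly and surjects onto $\prod_{v\mid\infty}\HH^1(k_v, G^{sc})$ by Theorem~\ref{thm:Kneser}) does not obstruct hitting every character, and then invoke Poitou--Tate to get the nine-term exact sequence whose duality pairing is exactly the one coming from the global Brauer pairing and \eqref{seq:CFT}. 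A clean way to package this is to write the commutative ladder relating $1 \to \mu \to G^{sc} \to G \to 1$, its adelic version, and the Tate--Poitou sequence for $\mu$, and then read off that the image of $\Br_e G$ in $(G(\Adele_k)/G(k))^\sim$ is all of it precisely because $\HH^2(k,\mu) \hookrightarrow \prod_v \HH^2(k_v,\mu)$ by the relevant portion of Poitou--Tate. I would also double-check the finiteness of $\Be(G)$ citation and that the map $\Br_e G \to \prod_v \Br_e G_v$ used to define $\Be(G)$ is the same one appearing in Lemma~\ref{lem:Br_char}, so that exactness on the left is literally the kernel.
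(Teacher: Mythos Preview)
Your overall architecture (local analysis, kernel, then Poitou--Tate for the fundamental group $\mu = F$) matches the paper's, but there is a genuine gap in your kernel step and an unnecessary detour in your surjectivity step.

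\textbf{Kernel.} You claim that if $b$ pairs trivially with every $G(k_v)$ then local Tate duality forces $b_v = 0$. This is correct at \emph{non-archimedean} $v$, because there $\HH^1(k_v, G^{sc}) = 0$ (Theorem~\ref{thm:Kneser}) so $G(k_v)$ surjects onto $\HH^1(k_v, F)$ and the cup-product $\HH^1(k_v, \Pic \overline{G}) \times \HH^1(k_v, F) \to \QQ/\ZZ$ is perfect. But at a \emph{real} place both statements can fail: $\HH^1(\RR, G^{sc})$ need not vanish, so $G(k_v) \to \HH^1(k_v, F)$ need not be surjective, and indeed the paper remarks (just after the proof) that the map $\Br_e G_v \to (G(k_v)/j_v(G^{sc}(k_v)))^\sim$ is not injective in general for $v$ real (e.g.\ $\SO_n$ over $\RR$). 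So from ``$b$ pairs trivially adelically'' you only get $b_v = 0$ at the finite places, i.e.\ $b \in \Sha_\infty(k, \Pic \overline{G})$. To conclude $b \in \Be(G) \cong \Sha(k, \Pic \overline{G})$ you need the extra input that $\Sha_\infty = \Sha$ for this module; the paper gets this from Sansuc's observation that archimedean decomposition groups are cyclic (\cite[Lem.~1.1]{San81}), combined with \cite[Cor.~7.4]{San81}. Without this your kernel identification is incomplete.

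\textbf{Surjectivity.} Your instinct to use the sequence $1 \to \mu \to G^{sc} \to G \to 1$ and Poitou--Tate for $\mu$ is exactly what the paper does. However, the transfer to the quasi-split inner form is a red herring here: the paper works directly with $G$, and your claim that ``an inner twist does not change $G(\Adele_k)$-characters \ldots\ in a way that matters'' would itself require justification (the transfer on characters in \S\ref{sec:transfer_Br_Ch} is developed only for adjoint $G$, and in any case compatibility with the Brauer pairing is precisely what is at stake). The paper instead runs the adelic and global long exact sequences for $1 \to F \to G^{sc} \to G \to 1$ in parallel, uses Theorem~\ref{thm:Kneser} to control the archimedean $\HH^1(k_v, G^{sc})$ contribution via a diagram chase, embeds $G(\Adele_k)/j(G^{sc}(\Adele_k))G(k)$ as a finite-index open subgroup of $\mathrm{P}^1(k,F)/\HH^1(k,F)$, and then applies Poitou--Tate and Pontryagin duality. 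The final step, showing $(G(\Adele_k)/j(G^{sc}(\Adele_k))G(k))^\sim = (G(\Adele_k)/G(k))^\sim$, uses that $G^{sc}$ has no nontrivial automorphic characters (\cite[Prop.~2.1]{JAMS}); this is the one point you did not mention and should include.
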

\begin{proof}
	\textbf{Step $0$: Generalities:}
	Let $G^{sc} \to G$ be the simply connected cover of $G$, with scheme theoretic kernel $F$,
	so that we have the exact sequence of group schemes
	\begin{equation} \label{eqn:ZGG}
		1 \to F \to G^{sc} \to G \to 1.
	\end{equation}
	Note that $F$ is the Cartier dual of the Picard scheme of $G$ (see \cite[Lem.~6.9(iii)]{San81}). 
	
	For any field extension $k \subset L$ we will also require the diagram
	\begin{equation}\label{diag:Br_commute}
	\begin{split}
	\xymatrix@C=0.1pt{
		\Br_e G_L &\times 		\ar[d] 				& G(L)  \ar[rrrrr] &  & & & & \Br L \ar@{=}[d] \\
		\HH^1(L, \Pic \overline{G})  & \times &  \HH^1(L, F)  \ar[rrrrr]^{\,\,\,\, \smile} & & & & & \Br L . }
	\end{split}
	\end{equation}	
	The top arrow is the Brauer pairing, 
	the left hand vertical arrow is the isomorphism from Lemma \ref{lem:Br}, the right hand vertical arrow
	comes from the sequence \eqref{eqn:ZGG}, and the bottom arrow is the cup-product.
	By \cite[Lem.~8.11]{San81}, this diagram is anticommutative.
	
	\textbf{Step $1$: Local fields:}
	Let $v$ be a \emph{non-archimedean} place of $k$. Applying Galois cohomology to \eqref{eqn:ZGG} we obtain
	the exact sequence
	\begin{equation} \label{eqn:ZGG_v}
		1 \to F(k_v) \to G^{sc}(k_v) \stackrel{j_v}{\to} G(k_v) \to \HH^1(k_v, F) \to 1,
	\end{equation}
	of pointed topological spaces, where we have used the vanishing $\HH^1(k_v, G^{sc})=0$ of Theorem \ref{thm:Kneser}.
	Note that $j_v(G^{sc}(k_v)) \subset G(k_v)$ is a closed normal subgroup of finite index and the topological
	quotient $G(k_v)/j_v(G^{sc}(k_v))$ has the discrete topology.
	Local Tate duality \cite[Thm.~7.2.6]{NSW08} implies that the cup-product pairing
	\begin{equation} \label{eqn:Tate}
		\HH^1(k_v, \Pic \overline{G}) \times \HH^1(k_v, F) \to \QQ/\ZZ
	\end{equation}
	is perfect. It follows from this, \eqref{diag:Br_commute}, \eqref{eqn:ZGG_v}, and the above topological considerations,
	that the pairing
	$$\Br_e G_v \times G(k_v) \to \QQ/\ZZ$$
	induces an isomorphism 
	\begin{equation} \label{eqn:Br_v}
		\Br_e G_v \cong (G(k_v)/j_v(G^{sc}(k_v)))^\sim	,
	\end{equation}
	for any non-archimedean place $v$.
	
	\smallskip	
	\textbf{Step $2$: The kernel:}
	Next, consider the homomorphism
	$$\varepsilon: \Br_e G \to G(\Adele_k)^\sim$$
	from Lemmas \ref{lem:bilinear} and \ref{lem:Br_char}.  Clearly $\Be(G) \subset \ker \varepsilon$. To prove the converse, let $b \in \ker \varepsilon$.
	By definition, we see that $b$ induces the trivial character at all places $v$. From \eqref{eqn:Br_v}, we have $b_v = 0 \in \Br_e G_v$
	for all non-archimedean places $v$. Hence from Lemma \ref{lem:Br} we deduce that, with respect to the isomorphism $\Br_e G \cong \HH^1(k, \Pic \overline{G})$,
	we have
	$$\ker \varepsilon \subset \Sha_{\infty}(k,\Pic \overline{G}) := \ker\left( \HH^1(k, \Pic \overline{G}) \to \prod_{v \nmid \infty} \HH^1(k, \Pic \overline{G})\right).$$
	However, as the decomposition group at an archimedean place is cyclic, it follows from \cite[Lem.~1.1]{San81} that we in fact have 
	$\Sha(k,\Pic \overline{G}) = \Sha_{\infty}(k,\Pic \overline{G}).$ Another result of Sansuc \cite[Cor.~7.4]{San81} yields
	$\Be(G) \cong \Sha(k,\Pic \overline{G})$, thus $|\ker \varepsilon| \leq |\Be(G)|$ and so $\ker \varepsilon = \Be(G)$, as claimed.
	
	\smallskip	
	\textbf{Step $3$: Surjectivity:}
	We finish by showing the required surjectivity using the Poitou--Tate exact sequence (see for example $(8.6.10)$ in \cite{NSW08}).
	As usual, for a finite abelian group scheme $M$ over $k$ we let 
	$$\mathrm{P}^1(k, M) = {\prod_v}' \HH^1(k_v, M)$$
	be the restricted direct product with respect to the subgroups $\HH^1(\OO_{v}, M)$ (these subgroups are well-defined for all but finitely many non-archimedean $v$).
	This has the topology of a locally compact abelian group.
	The part of Poitou--Tate relevant to us is
	$$\HH^1(k, M) \to \mathrm{P}^1(k,M) \to \HH^1(k, \widehat{M})^\sim,$$
	which is an exact sequence of topological groups.
	We apply this with $M=F$, and use Lemma \ref{lem:Br} to deduce that
	the natural map
	\begin{equation} \label{eqn:PT}
		\mathrm{P}^1(k,F)/\HH^1(k,F) \to (\Br_e G)^\sim
	\end{equation}
	is injective and that its image has the subspace topology 
	(here we abuse notation slightly, as the map $\HH^1(k, M) \to \mathrm{P}^1(k,M)$ need not be injective in general).
	
	We now apply \'{e}tale cohomology to  \eqref{eqn:ZGG}. Recall that the restricted direct product of exact sequences is again exact (as direct products and direct limits preserve exact sequences). By Theorem \ref{thm:Kneser}
	we therefore obtain the commutative diagram
	\begin{equation}
	\begin{split} \label{eqn:Intesection}
	\xymatrix{
		 G^{sc}(\Adele_k) \ar[r]^j & G(\Adele_k) \ar[r] & \mathrm{P}^1(k,F) \ar[r] & \prod_{v \mid \infty} \HH^1(k_v, G^{sc}) \\
		 G^{sc}(k) \ar[r] \ar[u] & G(k) \ar[r] \ar[u] & \HH^1(k,F) \ar[r] \ar[u] & \HH^1(k, G^{sc}) \ar[u]
	}
	\end{split}
	\end{equation}
	whose rows are exact sequences of pointed topological spaces. By Theorem \ref{thm:Kneser}, the rightmost arrow is a bijection.
	Using this, a chase through \eqref{eqn:Intesection} shows that
	$$\im (G(\Adele_k) \to \mathrm{P}^1(k,F)) \cap \im (\HH^1(k,F) \to \mathrm{P}^1(k,F)) = \im(G(k) \to  \mathrm{P}^1(k,F)).$$
	As $\HH^1(k_v, G^{sc})$ is finite for $v \mid \infty$, we thus see that 
	$$G(\Adele_k)/j(G^{sc}(\Adele_k))G(k) \subset \mathrm{P}^1(k,F)/\HH^1(k,F),$$
	is an open subgroup of finite index.
	Hence, using \eqref{eqn:PT}, the natural map
	$$G(\Adele_k)/j(G^{sc}(\Adele_k))G(k) \to (\Br_e G)^\sim$$
	is an injection whose image is equipped with the subspace topology. Applying Pontryagin duality, we deduce that the map
	$$\Br_e G \to (G(\Adele_k)/j(G^{sc}(\Adele_k))G(k))^\sim$$	
	is surjective. Thus to complete the proof, it suffices to show that the map
	\begin{equation} \label{eqn:top_iso}
		(G(\Adele_k)/j(G^{sc}(\Adele_k))G(k))^\sim \to (G(\Adele_k)/G(k))^\sim
	\end{equation}
	is an isomorphism of topological groups. It is clearly injective. To show surjectivity, let $\chi \in (G(\Adele_k)/G(k))^\sim$.
	As $G^{sc}$	is simply connected, any automorphic character of $G^{sc}$ is trivial \cite[Prop.~2.1]{JAMS}.
	Thus the restriction of $\chi$ to $j(G^{sc}(\Adele_k))$ is also trivial, and so
	\eqref{eqn:top_iso} is an isomorphism, as required.
\end{proof}

\begin{remark}
	The analogue of \eqref{eqn:Br_v} is \emph{false} for real places in general. For a real place $v$, the map
	$$\Br_e G_v \to (G(k_v)/j_v(G^{sc}(k_v)))^\sim	,$$
	is surjective, but need not be injective. For example, consider $\SO_n$ over $\RR$. From Lemma \ref{lem:Br} we have
	$\Br_e \SO_n \cong \mu_2$, but the map
	$ \Spin_n(\RR) \to \SO_n(\RR)$ is surjective. Of course the problem here is that the analogue of Theorem \ref{thm:Kneser}
	does not hold; the set $\HH^1(\RR, \Spin_n)$ is non-trivial.
\end{remark}

\begin{corollary} \label{cor:Br_adjoint}
	Suppose that $G$ is adjoint. Then the map $$\Br_e G \to (G(\Adele_k) / G(k))^\sim$$
	is an isomorphism.
\end{corollary}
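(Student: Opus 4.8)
The plan is to deduce the corollary from Theorem~\ref{thm:Br_e(G)}. That short exact sequence exhibits $\Be(G)$ as the kernel of the map in the corollary, and displays the map as surjective, so the statement is equivalent to the vanishing $\Be(G)=0$ for $G$ adjoint. Recall moreover from the proof of Theorem~\ref{thm:Br_e(G)} (via \cite[Cor.~7.4]{San81} together with Lemma~\ref{lem:Br}) that there is a canonical isomorphism $\Be(G)\cong\Sha(k,\Pic\overline{G})$. Thus it suffices to prove
$$\Sha(k,\Pic\overline{G})=0.$$

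First I would reduce to the case that $G$ is absolutely simple. A semisimple adjoint group over $k$ decomposes as $G\cong\prod_j\mathrm{Res}_{k_j/k}G_j$ with each $G_j$ absolutely simple adjoint over a finite extension $k_j/k$, and correspondingly $\Pic\overline{G}\cong\bigoplus_j\Ind_{k_j}^{k}\bigl(\Pic\overline{G_j}\bigr)$ as $\Gal(\overline{k}/k)$-modules. Since $\Sha$ of a finite direct sum is the direct sum of the $\Sha$'s, and since $\Sha\bigl(k,\Ind_{k_j}^{k}N\bigr)\cong\Sha(k_j,N)$ by Shapiro's lemma (compatibly with localisation at all places), we are reduced to showing $\Sha(k_j,\Pic\overline{G_j})=0$; that is, we may assume that $G$ is absolutely simple adjoint over $k$.

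In that case $\Pic\overline{G}$ is the quotient $P/Q$ of the weight lattice by the root lattice — one of the groups listed in the table of Remark~\ref{rem:abs_simple} — and the $\Gal(\overline{k}/k)$-action on it factors through the image $\Gamma$ of $\Gal(\overline{k}/k)$ in the automorphism group of the Dynkin diagram (inner automorphisms act trivially on $P/Q$), via the usual action of diagram automorphisms (the $*$-action). In particular $\Gamma$ is a subgroup of $\ZZ/2\ZZ$ in every type except $D_4$, where it is a subgroup of $S_3$. By a standard argument using the inflation–restriction sequence and the Chebotarev density theorem, every class in $\Sha(k,\Pic\overline{G})$ is inflated from a class in $\HH^1(\Gamma,P/Q)$ that restricts to $0$ in $\HH^1(C,P/Q)$ for every cyclic subgroup $C\le\Gamma$ (the decomposition groups at the unramified places are cyclic and exhaust, up to conjugacy, all cyclic subgroups of $\Gamma$). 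It remains to observe that no non-zero class of $\HH^1(\Gamma,P/Q)$ has this property. If $\Gamma$ is cyclic this is immediate on taking $C=\Gamma$. The only non-cyclic possibility is $\Gamma=S_3$ in type $D_4$, where $P/Q\cong(\ZZ/2\ZZ)^2$ and $S_3\cong\GL_2(\FF_2)$ acts by permuting the three non-zero elements; here a transposition acts with a single non-trivial Jordan block, so $P/Q$ is free of rank one as a module over $\FF_2[\ZZ/2\ZZ]$, and hence $\HH^1(S_3,P/Q)$ injects into $\HH^1(\ZZ/2\ZZ,\FF_2[\ZZ/2\ZZ])=0$. In all cases $\Sha(k,\Pic\overline{G})=0$, and the corollary follows.

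I do not expect a genuine obstacle. The only step with real content is the vanishing of $\Sha(k,\Pic\overline{G})$ in the absolutely simple case, and within that the one computation that is not immediate is the type-$D_4$ case with full $S_3$-action. The remaining care is bookkeeping: tracking the $\Gal(\overline{k}/k)$-action and the behaviour of places under restriction of scalars in the reduction step, and matching the entries of the table in Remark~\ref{rem:abs_simple} with the standard description of how diagram automorphisms act on $P/Q$ (by inversion when $P/Q$ is cyclic, by the permutation representation in type $D_4$).
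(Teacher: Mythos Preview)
Your argument is correct, but it takes a substantially longer route than the paper's. The paper's proof is a single sentence: by Theorem~\ref{thm:Br_e(G)} it suffices to show $\Be(G)=0$, and this is \cite[Prop.~9.8]{San81}. You instead unpack that vanishing by hand: you pass to $\Sha(k,\Pic\overline{G})$ via \cite[Cor.~7.4]{San81} (already invoked in the proof of Theorem~\ref{thm:Br_e(G)}), reduce to the absolutely simple case by restriction of scalars, and then run a Chebotarev/cyclic-subgroup argument using the classification of Dynkin diagram automorphism groups, with an explicit $\HH^1(S_3,(\ZZ/2\ZZ)^2)=0$ computation in type $D_4$.

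What this buys you is self-containedness: you avoid citing Sansuc's Prop.~9.8 as a black box, at the cost of a case-by-case argument. A couple of small points worth making explicit in a final write-up: (i) the injectivity of restriction $\HH^1(S_3,P/Q)\hookrightarrow \HH^1(\ZZ/2\ZZ,P/Q)$ follows from the transfer argument since $[S_3:\ZZ/2\ZZ]=3$ is prime to $|P/Q|=4$; (ii) the reduction ``every class in $\Sha$ is inflated from $\HH^1(\Gamma,P/Q)$'' uses that $\Sha(K,P/Q)=0$ for the trivial module over the splitting field $K$, which follows from Chebotarev. Both are standard, but your ``hence'' and ``by a standard argument'' gloss over them.
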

\begin{proof}
	It suffices to note that $\Be(G) = 0$ in this case (see \cite[Prop.~9.8]{San81}).
\end{proof}

\subsubsection{Compatibility of transfers} \label{sec:transfer_Br_Ch}

In the case that $G$ is adjoint and $k$ is a number field, a transfer map 
$$\tr: (G(\Adele_k)/G(k))^\sim \to (G'(\Adele_k)/G'(k))^\sim $$
on automorphic characters was constructed in \cite[\S 2]{JAMS}, where $G'$ denotes the quasi-split inner form
of $G$
(we recall this construction in the proof of Lemma~\ref{lem:transfer_characters}).
We next show that this transfer is compatible with our transfer, as defined in \S\ref{sec:transfer}.

\begin{lemma} \label{lem:transfer_characters}
	Assume that $G$ is adjoint with quasi-split inner form $G'$.
	The transfer map on Brauer groups and automorphic characters commute, i.e.~they give rise to a commutative
	diagram
	$$
	\xymatrix{
		 \Br_e G \ar[d]^{\tr} \ar[r] & (G(\Adele_k)/G(k))^\sim \ar[d]^{\tr} \\
		 \Br_e G' \ar[r] & (G'(\Adele_k)/G'(k))^\sim.}
	$$
\end{lemma}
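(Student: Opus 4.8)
The plan is to express both transfer maps, via the exact sequence \eqref{eqn:ZGG} and its analogue for $G'$, in terms of the algebraic fundamental group $F$ of $G$ (the kernel of $G^{sc}\to G$). Since $G'$ is an inner form of $G$ and $\Inn G$ is connected, the fundamental group of $G'$ is \emph{canonically} $F$ again — exactly as the boundary divisors and $\Pic\overline G$ were canonically identified in the proof of Lemma~\ref{lem:transfer_residues} — and the point to be checked is that, under this identification, both transfers become the identity on the relevant cohomology of $F$.

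First I would recall the construction of $\tr$ on automorphic characters from \cite[\S2]{JAMS}. Using that a simply connected group has no non-trivial automorphic characters \cite[Prop.~2.1]{JAMS}, an automorphic character of $G$ is trivial on $j(G^{sc}(\Adele_k))G(k)$, so by \eqref{eqn:top_iso} one has $(G(\Adele_k)/G(k))^\sim = (G(\Adele_k)/j(G^{sc}(\Adele_k))G(k))^\sim$; and by the diagram \eqref{eqn:Intesection} the group $G(\Adele_k)/j(G^{sc}(\Adele_k))G(k)$ sits as an open, finite-index subgroup of $\mathrm{P}^1(k,F)/\HH^1(k,F)$. I would then verify that the transfer of \cite{JAMS} is, through these identifications, induced by the identity on $\mathrm{P}^1(k,F)/\HH^1(k,F)$ (restricting to the subgroup, respectively passing to the dual quotient, as appropriate), the two sides sharing the same $F$.

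On the Brauer side, the transfer \eqref{eqn:Br_transfer} is, via Lemma~\ref{lem:Br}, nothing but the identity on $\HH^1(k,\Pic\overline G)=\HH^1(k,\Pic\overline{G'})$. Dualising the Poitou--Tate argument of Step~3 of the proof of Theorem~\ref{thm:Br_e(G)} — which identifies $\Br_e G\cong\HH^1(k,\widehat F)$ and, via \eqref{eqn:PT}, exhibits $\mathrm{P}^1(k,F)/\HH^1(k,F)$ inside $(\Br_e G)^\sim$ — one sees that the Brauer transfer also corresponds to the identity on $\mathrm{P}^1(k,F)/\HH^1(k,F)$. Putting the last two paragraphs together and chasing the anticommutative square \eqref{diag:Br_commute} (over $k$ and over each $k_v$), the diagram \eqref{eqn:Intesection} for $G$ and for $G'$, and the isomorphism of Corollary~\ref{cor:Br_adjoint}, should yield the claimed commutativity.

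The step I expect to be the main obstacle is reconciling the two constructions at the archimedean places. As the Remark following Theorem~\ref{thm:Br_e(G)} shows, the local isomorphism \eqref{eqn:Br_v} fails at real places, and $G$ and $G'$ need not be isomorphic over $k_v$ for $v\mid\infty$, so one cannot simply invoke a place-by-place isomorphism of groups. Instead one must check directly — by unwinding both \cite[\S2]{JAMS} and the proof of Theorem~\ref{thm:Br_e(G)} — that each transfer factors through data attached to $F$ alone, which is insensitive to the inner twist; here the relevant input is that the subgroups $j(G^{sc}(\Adele_k))G(k)\subset G(\Adele_k)$ and $j'(G'^{sc}(\Adele_k))G'(k)\subset G'(\Adele_k)$ have matching images in $\mathrm{P}^1(k,F)/\HH^1(k,F)$. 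Once this bookkeeping is carried out, the commutative square in the statement follows formally.
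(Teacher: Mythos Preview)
Your overall strategy is sound and, at the non-archimedean places, essentially coincides with the paper's: both arguments reduce to the observation that the local map $\HH^1(k_v,\Pic\overline G)\to (G(k_v)/j_v(G^{sc}(k_v)))^\sim$ is an isomorphism for $v\nmid\infty$ (this is \eqref{eqn:Br_v}), and that $\HH^1(k_v,\Pic\overline G)$ is insensitive to inner twisting. Since the transfer of \cite[\S2]{JAMS} is \emph{defined} place-by-place via exactly this isomorphism, the local commutativity at non-archimedean $v$ is immediate from Lemma~\ref{lem:Br}.

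Where your proposal diverges is in the treatment of the archimedean places, and this is where it is incomplete. You correctly identify the difficulty---\eqref{eqn:Br_v} can fail for $v\mid\infty$---but your proposed fix, namely checking that the images of $j(G^{sc}(\Adele_k))G(k)$ and $j'(G'^{sc}(\Adele_k))G'(k)$ in $\mathrm{P}^1(k,F)/\HH^1(k,F)$ coincide, is not obviously true and you do not indicate how to prove it. In fact the archimedean local quotients $G(k_v)/j_v(G^{sc}(k_v))$ and $G'(k_v)/j'_v(G'^{sc}(k_v))$ can genuinely differ, so this bookkeeping is not a formality.

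The paper sidesteps this entirely with a much simpler observation: an automorphic character of $G$ is determined by its non-archimedean components. Indeed, if $\chi$ is an automorphic character with $\chi_v$ trivial for all non-archimedean $v$, then weak approximation for $G$ forces $\chi$ to be trivial. Hence once the two transfers are shown to agree at every non-archimedean place---which is the easy local calculation above---they agree globally, and the archimedean places never need to be analysed directly. This is the missing idea in your argument.
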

\begin{proof}
	The transfer on automorphic characters is built from a collection of local transfers.
	We first recall the definition of these local transfer for a non-archimedean place $v$,
	as defined in \cite[\S 2.2]{JAMS}. As explained in the proof of Theorem~\ref{thm:Br_e(G)}
	(see \eqref{eqn:Br_v}),
	the natural map
	$$ \HH^1(k_v, \Pic \overline{G}) \to (G(k_v)/j_v(G^{sc}(k_v)))^\sim$$
	is an isomorphism. The left-hand side does not change upon twisting by an inner automorphism, hence
	we obtain an isomorphism
	$$\tr_v: (G(k_v)/j_v(G^{sc}(k_v)))^\sim \to (G'(k_v)/j'_v(G^{'sc}(k_v)))^\sim$$
	which is the definition of the local transfer from \cite[\S 2.2]{JAMS}. In the light of Lemma \ref{lem:Br},
	we see that the local transfer $\tr_v$ and the transfer map $\Br_e G_v \to \Br_e G'_v$ on Brauer
	groups commute for all non-archimedean places $v$.
	
	We have shown that the automorphic characters agree at all non-archimedean places. 
	This in fact implies that they agree at all places.
	Indeed, if $\chi$ is an automorphic character of $G$ whose local components $\chi_v$ are trivial for all
	but finitely many $v$, then a simple application of weak approximation for $G$ 
	implies that $\chi$ is actually trivial.
\end{proof}

\subsection{The case of $\PGL_n$}

We now explain the above theory in the special case where $G = \PGL_n$. We have the following explicit
description of the Brauer group in terms of cyclic algebras 
(see \cite[\S1.5.7]{Poo17} for a general treatment of cyclic algebras).

\begin{proposition} \label{prop:PGL_n}
	Let $n \in \NN$ and let $k$ be a field of characteristic $0$.
	Then
	$$\Br \overline{\PGL_n} = 0, \quad \Br_e \PGL_n \cong \Hom(\Gal(\overline{k}/k), \ZZ/n\ZZ).$$
	Explicitly, every element of $\Br_e \PGL_n$ has a representative by a cyclic
	algebra of the form $(\det_n, \alpha)$ for some
	$\alpha \in \Hom(\Gal(\overline{k}/k), \ZZ/n\ZZ)$, where $\det_n$ denotes the determinant on $\PGL_n \subset \PP_k^{n^2-1}$
	viewed as a homogeneous polynomial of degree $n$. 
\end{proposition}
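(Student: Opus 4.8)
The plan is to deduce everything from Lemma~\ref{lem:Br} together with the structure of $\Pic\overline{\PGL_n}$ and an explicit exhibition of enough cyclic algebras. First I would recall from Remark~\ref{rem:abs_simple} (i.e.~from \cite[Tab.~9.2]{MT11}) that $\PGL_n$ is adjoint of type $A_{n-1}$, so $\Pic\overline{\PGL_n}\cong\ZZ/n\ZZ$; since this group is cyclic, the last part of Lemma~\ref{lem:Br} immediately gives $\Br\overline{\PGL_n}=0$. (Alternatively one can see this from $\Br\overline{\PGL_n}\cong\HH^3(\Pic\overline{\PGL_n},\ZZ)=\HH^3(\ZZ/n\ZZ,\ZZ)=0$, as $\HH^3$ of a cyclic group with trivial coefficients vanishes.) Consequently $\Br\PGL_n=\Br_1\PGL_n$, so every Brauer class is algebraic and it suffices to understand $\Br_e\PGL_n$.

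Next, by the first part of Lemma~\ref{lem:Br} there is a functorial isomorphism
$$\Br_e\PGL_n\cong\HH^1(k,\Pic\overline{\PGL_n})\cong\HH^1(k,\ZZ/n\ZZ)=\Hom(\Gal(\overline{k}/k),\ZZ/n\ZZ),$$
where for the middle isomorphism I must check that the Galois action on $\Pic\overline{\PGL_n}\cong\ZZ/n\ZZ$ is \emph{trivial}: this holds because $\PGL_n$ is split over $k$ (it is the automorphism group of the split group $\SL_n$), so $\Gal(\overline{k}/k)$ acts trivially on the character lattice and hence on $\Pic\overline{\PGL_n}$. This gives the abstract group structure claimed. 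The remaining, and main, point is the \emph{explicit} description: that every class is represented by a cyclic algebra $(\det_n,\alpha)$. For this I would observe that $\det_n\in k(\PGL_n)^*/k(\PGL_n)^{*n}$ is a well-defined class of order exactly $n$ (its divisor on $\PP_k^{n^2-1}$ is $n$ times the boundary hyperplane, and modulo $n$-th powers it generates $\Pic\overline{\PGL_n}$, matching the identification in \eqref{eqn:Div}), so for each $\alpha\in\Hom(\Gal(\overline{k}/k),\ZZ/n\ZZ)$ the cyclic algebra $(\det_n,\alpha)$ defines a class in $\Br k(\PGL_n)$ which is unramified everywhere on $\PGL_n$ (the only possible ramification is along the boundary, which is removed), hence lies in $\Br\PGL_n=\Br_e\PGL_n$ after subtracting a constant algebra to normalise at $e$ (note $\det_n(e)=1$, so $(\det_n,\alpha)(e)=0$ automatically). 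Then I would compute the residue of $(\det_n,\alpha)$ along the boundary divisor using the standard residue formula for cyclic algebras, identify it with $\alpha$ under \eqref{eqn:residue}, and conclude that $\alpha\mapsto(\det_n,\alpha)$ is a section of the residue map; since for $\PGL_n$ the residue map $\Br_e\PGL_n\to\HH^1(k,\ZZ/n\ZZ)$ is an isomorphism (the boundary is a single geometrically irreducible divisor with trivial $k_\alpha=k$, and \eqref{eqn:Div} identifies $\Div_{\overline D}\overline X\cong\ZZ$ so that $\res$ is injective, while it is surjective by the abstract count), this section is in fact a bijection and realises every element of $\Br_e\PGL_n$ as such a cyclic algebra.

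The main obstacle I expect is the bookkeeping in this last step: being careful that $\det_n$ as an element of $\QQ^*/\QQ^{*n}$-valued function on $\PGL_n$ has order precisely $n$ rather than a proper divisor of $n$ (equivalently, that its residue generates the cyclic group of residues), and matching the normalisation/sign conventions between the cyclic-algebra residue formula and the map \eqref{eqn:residue} so that $\alpha\mapsto(\det_n,\alpha)$ is genuinely inverse to $\res$ up to an automorphism of $\ZZ/n\ZZ$. The first point can be checked on a single place: choosing a place $v$ where $\det_n$ has odd valuation image, or more robustly by noting that the generator of $\Pic\overline{\PGL_n}$ pulls back from the generator $\OO(1)$ of $\Pic\PP_k^{n^2-1}$ under \eqref{eqn:Div} and $\det_n$ is a degree-$n$ form cutting out the boundary, so its class modulo $n$-th powers is exactly the image of that generator; the second point is a routine invocation of the functoriality asserted in Lemma~\ref{lem:Br} together with Sansuc's identification \cite[Lem.~9.1]{San81} of \eqref{eqn:residue} with the usual residue, up to the harmless sign already flagged in \S\ref{sec:residues}.
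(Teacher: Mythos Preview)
Your proposal is correct and follows essentially the same route as the paper. Both arguments use cyclicity of $\Pic\overline{\PGL_n}\cong\ZZ/n\ZZ$ together with Lemma~\ref{lem:Br} for the vanishing of $\Br\overline{\PGL_n}$, then analyse the residue map along the boundary of $\PGL_n\subset\PP_k^{n^2-1}$ via the sequence \eqref{eqn:Div} (here $0\to\ZZ\to\ZZ\to\ZZ/n\ZZ\to 0$) to show $\res$ is an isomorphism, and finish by noting that the residue of $(\det_n,\alpha)$ is $\alpha$. The only cosmetic difference is that you first invoke Lemma~\ref{lem:Br} to obtain the abstract isomorphism $\Br_e\PGL_n\cong\HH^1(k,\ZZ/n\ZZ)$ and then use a counting argument for surjectivity of $\res$, whereas the paper reads both injectivity and surjectivity directly off the long exact sequence $0=\HH^1(k,\ZZ)\to\Br_e\PGL_n\to\HH^2(k,\ZZ)[n]\to 0$; the paper in fact explicitly remarks that your route via Lemma~\ref{lem:Br} is a valid alternative.
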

\begin{proof}
	As $\Pic \overline{\PGL}_n \cong \ZZ/n\ZZ$, Lemma \ref{lem:Br} implies that
	the transcendental Brauer group is trivial. One could also use Lemma \ref{lem:Br}
	for the explicit description of the algebraic
	Brauer group, but we give a direct proof instead.
	We use the exact sequence \eqref{eqn:Div} with respect to the compactification $\PGL_n \subset \PP_k^{n^2-1}$.
	This sequence here is just
	$$ 0 \to \ZZ \to \ZZ \to \ZZ/n\ZZ \to 0.$$
	Applying Galois cohomology, we obtain the exact sequence 
	\begin{equation} \label{eqn:res_PGL_n}
		0 = \HH^1(k,\ZZ) \to \Br_e \PGL_n \to \HH^2(k,\ZZ)[n] \to 0.
	\end{equation}
	The residue map $\res: \Br_e \PGL_n \to \HH^1(k,\QQ/\ZZ)$ is given by composing this map
	with the canonical isomorphism $\HH^2(k,\ZZ)[n] \cong \HH^1(k,\QQ/\ZZ)$, thus
	$\res$ is an isomorphism.
	However, the residue of the cyclic algebra $(\det_n, \alpha)$ is simply $\alpha$;
	the result follows.
\end{proof}

When $k$ is a number field, under the isomorphism of Corollary \ref{cor:Br_adjoint} the cyclic algebra
$(\det_n, \alpha)$ corresponds to the automorphic $\QQ/\ZZ$-character
$$ \PGL_n(\Adele_k) \to \ZZ/n\ZZ, \quad  g \mapsto \chi(\det_n g),$$
where $\chi: \Adele_k^* \to \ZZ/n\ZZ$ is the automorphic character attached to $\alpha$ via class field theory.

\section{Wonderful compactifications} \label{sec:wonderful}

In this section we recall the construction of wonderful compactifications for adjoint semi-simple groups
and their basic properties.
The wondeful compactification was introduced by de Concini and Procesi in \cite{DCP83} over algebraically closed fields of characteristic $0$.
We work over an arbitrary field $k$ of characteristic $0$, and explain the construction in this setting.

Let $G$ be an adjoint semi-simple algebraic group over $k$. There are many ways to construct the wonderful
compactification of $G$. We use the following, as it is clear that it works over non-algebraically closed fields and 
avoids the choice of  a maximal torus.

\begin{definition}
	Let $\mathfrak{g}$ be the Lie algebra of $G$ and $\mathbb{L}$ the variety of Lie subalgebras
	of $\mathfrak{g} \oplus \mathfrak{g}$, viewed as a closed subscheme of a Grassmanian.
	
	We define the \emph{wonderful compactification} $X$ of $G$ to be the closure of the $G \times G$-orbit
	of the point of $\mathbb{L}$ corresponding to the diagonally embedded
	$\mathfrak{g} \subset \mathfrak{g} \oplus \mathfrak{g}$.
\end{definition}

Note that $X(k) \neq \emptyset$.
As proved in \cite[\S6.1]{DCP83} over $\overline{k}$, the wonderful compactification is a smooth projective bi-equivariant compactification of $G$. That the same holds over $k$ follows from Galois descent. We denote by $\A$ the set of boundary divisors of $X$, and for $\alpha \in \A$ the corresponding divisor by $D_\alpha$. We let $\overline{\A}$
be the set of boundary divisors over $\overline{k}$. The geometric boundary divisor $\overline{D} = \cup_{\overline{\alpha} \in \overline{\A}} D_{\overline{\alpha}} = \overline{X} \setminus \overline{G}$ is a simple normal crossings divisor over $\overline{k}$ (see \cite[\S3.1]{DCP83} or \cite[\S2.1]{Bri07}).

Now suppose that $G$ is quasi-split, let $B \subset G$ be a Borel subgroup
and  $T \subset B \subset G$ a maximal torus. Let $B_-$ be the  opposite Borel subgroup
(i.e.~$B_- \cap B = T)$ and consider the open orbit $B_- B \subset G$. Its complement in $X$ consists of $(B_-\times B)$-stable divisors. Those $(B_-\times B)$-stable divisors which are not $(G\times G)$-stable are called  {\it colours}. Let $X^\circ$ be the complement of the colours in $X$ and let $Z$ be the Zariski closure of $T$ in $X^\circ$.

\begin{lemma}
\label{lem:spherical}
	Assume that $G$ is quasi-split with Borel subgroup $B$ and let $T \subset B$ be a maximal torus.
	
	\begin{enumerate}
		\item The variety $Z$, as defined above, is a smooth toric variety with respect to $T$.
		\item For each $\alpha \in \A$, 
		the scheme theoretic intersection $E_\alpha = D_\alpha \cap Z$ is an integral divisor in $Z$.
		Moreover $D_\alpha$ and $Z$ intersect transversely at every smooth point of $D_\alpha$.
		\item Let $f$ be a simple root of $\overline{T}$ with respect to $\overline{B}$, viewed as a regular function on $\overline{T}$.
		Then $\mathrm{div} f = E_{\overline{\alpha}} \subset \overline{Z}$ for some $\overline{\alpha} \in \overline{\A}$. 
		\item These constructions give Galois equivariant bijections between the following:
		\begin{itemize}
			\item The simple roots of $\overline{G}$ with respect to $\overline{B}$ and $\overline{T}$.
			\item The boundary divisors of $\overline{G} \subset \overline{X}$.
			\item The boundary divisors of $\overline{T} \subset \overline{Z}$.
		\end{itemize}
	\end{enumerate}

\end{lemma}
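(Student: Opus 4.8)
The plan is to prove all four parts by first reducing to the split case and then importing standard facts about wonderful compactifications from de Concini--Procesi and Brion. By Galois descent it suffices to establish the Galois-equivariant structure over $\overline{k}$, so I would first pass to the split form $\overline{G} \subset \overline{X}$ with split maximal torus $\overline{T} \subset \overline{B}$. For part (1): the key point is that $X^\circ$ is the open $(B_- \times B)$-stable subset obtained by removing the colours, and it is classical (see \cite[\S2.2]{Bri07}) that $X^\circ \cong B_- \times Z \times B / (\text{diagonal } T)$-type structure, more precisely that the $T$-action on $Z$ makes $Z$ a smooth affine toric variety isomorphic to $\mathbb{A}^r$ where $r$ is the rank of $G$, with $T$ acting through the simple roots. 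I would cite this directly; the smoothness of $Z$ follows since $X$ is smooth and $Z$ is cut out transversally by the closures of the colours intersected with a torus slice.

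For part (2), the boundary divisors $D_\alpha$ of $X$ restrict to the toric boundary divisors of $Z$; since $\overline{Z} \cong \mathbb{A}^r$ with coordinate hyperplanes as toric boundary, each $E_{\overline{\alpha}} = D_{\overline{\alpha}} \cap \overline{Z}$ is one coordinate hyperplane, hence integral, and these descend to integral divisors $E_\alpha$ on $Z$ over $k$ by Galois descent (a Galois orbit of geometric boundary divisors corresponds to a single $k$-divisor, whose intersection with $Z$ is the corresponding union, but since $Z$ is defined using the quasi-split structure the individual pieces are permuted compatibly — here one uses that $\mathscr{A}$ indexes $k$-rational boundary divisors and $E_\alpha$ is the scheme-theoretic intersection, which is integral because over $\overline{k}$ it is a single coordinate hyperplane for each geometric component in the orbit; I need to be slightly careful and argue that $D_\alpha \cap Z$ is irreducible over $k$, which follows since $Z$ meets each geometric component of $\overline{D}_\alpha$ in exactly one hyperplane and $\Gal$ acts transitively on those components). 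Transversality of $D_\alpha$ and $Z$ at smooth points of $D_\alpha$ is a local computation in the toric chart $\mathbb{A}^r \times (\text{open cell})$: near a smooth point of $D_\alpha$ the pair $(X, D_\alpha)$ looks like $(\mathbb{A}^N, \{x_1 = 0\})$ and $Z$ is a linear subspace meeting $\{x_1 = 0\}$ transversally.

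For part (3): in the local toric model of $X^\circ$ near the unique closed $(G \times G)$-orbit, the simple roots $f_i$ of $\overline{T}$ extend to regular functions on $\overline{Z} \cong \mathbb{A}^r$ which are precisely the toric coordinate functions up to units — this is the defining property of the wonderful compactification (the $T$-orbit through the base point of $Z$ has $\overline{Z}$ as its closure, and the simple roots are the characters by which $T$ acts on the normal directions), so $\mathrm{div}(f_i) = E_{\overline{\alpha}_i}$ for the corresponding geometric boundary divisor. Part (4) then assembles (3) with the standard indexing: de Concini--Procesi show the $(G \times G)$-boundary divisors of $\overline{X}$ are in bijection with simple roots, and (3) matches these with boundary divisors of $\overline{Z}$; all three bijections are Galois-equivariant because the Galois action on simple roots (via the $*$-action through $\overline{B}, \overline{T}$, which are defined over $k$ since $G$ is quasi-split) is by construction the action induced on boundary divisors. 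I expect the main obstacle to be part (2), specifically verifying that the scheme-theoretic intersection $E_\alpha$ is \emph{integral} (reduced and irreducible) over the non-closed field $k$ rather than merely over $\overline{k}$, and that $D_\alpha$ and $Z$ meet transversally even at non-smooth points of $D_\alpha$ is \emph{not} claimed — only at smooth points — so the statement is calibrated to what the local toric model gives; the care is in the descent and in identifying the right local model along the lower-dimensional strata.
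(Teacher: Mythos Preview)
Your proposal is correct and follows essentially the same route as the paper: reduce to the algebraically closed case, invoke the explicit decomposition $X^\circ \cong U^- \times \mathbb{A}^{|\A|} \times U$ from de Concini--Procesi (which the paper cites directly, together with \cite{EJ08} for part (3), while you cite Brion's reformulation), identify $Z$ with the affine space factor and the $D_\alpha \cap X^\circ$ with coordinate hyperplanes, read off the simple roots as the toric coordinates, and then descend by Galois equivariance. Your extra care about integrality of $E_\alpha$ over $k$ in part (2) is well-placed but ultimately routine, which is why the paper dispatches it with ``a simple Galois descent argument''.
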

\begin{proof}
	We first assume that $k$ is algebraically closed. In this case,
	we use some of the descriptions given in \cite{DCP83}
	(see also \cite[\S2.1]{Bri07}).
	As explained in the proof of \cite[Thm~3.1]{DCP83} (cf.~\cite[Lem.~2.2]{DCP83} and \cite[Prop.~2.3]{DCP83}),
	there is an isomorphism 
	\[
	U^- \times \mathbb{A}^{|\A|} \times U \cong X^{\circ},
	\]
	where $U \subset B$ is the unipotent radical of $B$, and $Z$ is identified with the linear
	subspace $\mathbb{A}^{|\A|}$ as a  toric variety (this in particular shows part (1)).
	Moreover, let $x_\alpha$ be the coordinates on $\mathbb{A}^{|\A|}$ for $\alpha \in \A$.
	Then for a boundary divisor $D_\alpha$, the intersection $D_\alpha \cap X^{\circ}$ 
	is identified with the hyperplane $x_\alpha = 0$ in $X^{\circ}$.
	As the affine subspace $\mathbb{A}^{|\A|} \subset X^{\circ}$ clearly intersects these
	transversely in a smooth divisor, this shows part (2).
	Part (3) follows from the explicit description of the $D_\alpha$ given in the proof of \cite[Lem.~2.7]{EJ08}.

	Part (4) easily follows from the above, and the case of a non-algebraically closed field
	follows from a simple Galois descent argument.
\end{proof}

Returning to the case of general adjoint $G$, we have the following.

\begin{proposition} \label{prop:Picard}
The following  hold.
\begin{enumerate}
\item We have the exact sequence:
\begin{equation} \label{eqn:Picard}
0 \rightarrow \oplus_{\alpha \in \A} \mathbb Z D_\alpha \rightarrow \Pic(X) \rightarrow \Pic(G)\rightarrow 0.
\end{equation}

\item
The closed cone of effective divisors $\Lambda_{\mathrm{eff}}(X)$ is generated by the boundary components of $X$, i.e.,
\[
\Lambda_{\mathrm{eff}}(X) = \oplus_{\alpha \in \A} \mathbb R_{\geq 0} D_\alpha.
\]
\item There exist $\kappa_\alpha  \geq 0$ such that the anticanonical divisor is given by
\begin{equation} \label{def:kappa}
	-K_X = \sum_{\alpha \in \A} (\kappa_\alpha + 1) D_\alpha.
\end{equation}

\end{enumerate}
\end{proposition}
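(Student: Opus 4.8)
The plan is to reduce each statement to the known combinatorial structure of the wonderful compactification. For part (1), I would invoke the general exact sequence \eqref{eqn:Div} applied to the compactification $G \subset X$, which gives $0 \to \Div_{\overline D}\overline X \to \Pic \overline X \to \Pic \overline G \to 0$ as Galois modules. The point is that $\Div_{\overline D}\overline X$ is the permutation module on the set $\overline{\A}$ of geometric boundary divisors, and the $G\times G$-action permutes these divisors through the Galois action only (inner automorphisms act trivially, as noted in \S\ref{sec:transfer}). Taking Galois invariants, the sequence stays exact on the left and in the middle because $\Div_{\overline D}\overline X$ being a permutation module has vanishing $\HH^1$ is not quite what is needed — rather, one uses that $\oplus_{\alpha \in \A}\ZZ D_\alpha = (\Div_{\overline D}\overline X)^{\Gal}$ and that $\Pic X \to (\Pic \overline X)^{\Gal}$ together with the fact that $\Br k$ issues do not obstruct surjectivity onto $\Pic G$ here since $X(k) \neq \emptyset$, so $\Pic X = (\Pic \overline X)^{\Gal}$ and likewise $\Pic G = (\Pic \overline G)^{\Gal}$. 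The surjectivity $\Pic X \to \Pic G$ then follows from the vanishing of $\HH^1(k, \Div_{\overline D}\overline X)$, which holds since it is a permutation module (Shapiro plus $\HH^1(k_\alpha, \ZZ) = 0$). This yields \eqref{eqn:Picard}.

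For part (2), the cone of effective divisors: over $\overline k$, it is a classical fact (de Concini--Procesi \cite{DCP83}, see also \cite{Bri07}) that $\Lambda_{\mathrm{eff}}(\overline X)$ is the simplicial cone generated by the boundary divisors $D_{\overline\alpha}$, essentially because $\overline X$ is spherical and the colours together with the boundary generate the effective cone while the colours are expressible as combinations of boundary divisors plus a torus-invariant part pulled back from the flag variety — but on the adjoint wonderful compactification the relevant Picard computation makes the boundary divisors a basis of a full-dimensional simplicial subcone containing the effective cone. Then $\Lambda_{\mathrm{eff}}(X) = \Lambda_{\mathrm{eff}}(\overline X)^{\Gal}$ (effectivity descends since a Galois-invariant effective divisor class on $\overline X$ is represented by a Galois-invariant, hence $k$-rational, effective divisor using $X(k)\neq\emptyset$), and the Galois-fixed part of $\oplus_{\overline\alpha}\RR_{\geq 0}D_{\overline\alpha}$ is exactly $\oplus_{\alpha\in\A}\RR_{\geq 0}D_\alpha$ since Galois permutes the rays.

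For part (3), I would compute the anticanonical class. Since $\overline D$ is a strict normal crossings divisor and $\overline X \setminus \overline D = \overline G$ has trivial canonical bundle (it is a group, hence the canonical sheaf is trivial as it admits a nowhere-vanishing invariant top form), the adjunction/residue exact sequence forces $-K_{\overline X}$ to be supported on the boundary, i.e.\ $-K_{\overline X} = \sum_{\overline\alpha \in \overline\A} m_{\overline\alpha} D_{\overline\alpha}$ for integers $m_{\overline\alpha}$; writing $m_{\overline\alpha} = \kappa_{\overline\alpha}+1$, one has the known formula (de Concini--Procesi) expressing these coefficients in terms of the sum of positive roots, which are all $\geq 1$, so $\kappa_{\overline\alpha}\geq 0$. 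Galois-equivariance of the whole picture makes the coefficients constant on Galois orbits, and descending gives $-K_X = \sum_{\alpha\in\A}(\kappa_\alpha+1)D_\alpha$ with $\kappa_\alpha \geq 0$.

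The main obstacle I expect is part (2): carefully justifying that the effective cone of the geometric wonderful compactification is precisely the simplicial cone on the boundary divisors (as opposed to a larger polyhedral cone), and that effectivity of divisor classes descends along $\Gal(\overline k/k)$ without subtlety. The Picard-group computation in part (1) and the descent of the effective cone are the places where one must be precise about using $X(k)\neq\emptyset$ and the triviality of the inner action on $\Pic$, rather than the routine root-system bookkeeping in part (3).
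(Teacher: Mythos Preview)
Your proposal is correct and follows essentially the same route as the paper: establish each statement over $\overline{k}$ by citing the known structure theory of the wonderful compactification, then descend via Galois cohomology using that the boundary divisors form a permutation module and that $X(k)\neq\emptyset$ forces $\Pic X=(\Pic\overline X)^{\Gal}$ (and likewise for $G$). The only divergence is in part~(3): the paper invokes Brion's formula $-K_X=\sum_{\alpha}D_\alpha+\sum_{\text{colours }D}m_D D$ with $m_D>0$ and then uses part~(2) to rewrite the colour contribution as a non-negative combination of boundary divisors, whereas you argue directly that $-K_{\overline X}$ lies in $\bigoplus\ZZ D_{\overline\alpha}$ (since $K_{\overline G}$ is trivial) and appeal to an explicit coefficient formula; the paper's detour through colours is slightly more self-contained, while your version needs a precise reference for the inequality $m_{\overline\alpha}\geq 1$.
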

\begin{proof}
We first assume that $k$ is algebraically closed.
Part (1) holds for any smooth projective compactification of a semisimple algebraic group (see \cite[\S9.0]{San81}).  The second part is a well-known property of the wonderful compactification (see \cite[Ex.~2.2.4., Ex.~2.3.5]{Bri07}).
For the last part, a general formula of Brion \cite[Thm.~4.2]{Bri07} states that 
\begin{equation} \label{eqn:-K_X}
-K_X = \sum_{\alpha \in \A} D_\alpha + \sum_{\text{colours }D} m_D D,
\end{equation}
for some $m_D > 0$. The result follows.

Now assume that $k$ is non-algebraically closed. For part (1), we apply Galois cohomology to the sequence \eqref{eqn:Picard} over $\overline{k}$ to obtain
$$0 \rightarrow \HH^0(k,\oplus_{\overline{\alpha} \in \overline{\A}} \mathbb Z D_{\overline{\alpha}})) \rightarrow \HH^0(k,\Pic(\overline{X})) \rightarrow \HH^0(k,\Pic(\overline{G})) \rightarrow \HH^1(k,\oplus_{\overline{\alpha} \in \overline{\A}} \mathbb Z D_{\overline{\alpha}})).$$
By Shapiro's lemma we have $\HH^0(k,\oplus_{\overline{\alpha} \in \overline{\A}} \mathbb Z D_{\overline{\alpha}}) = \oplus_{\alpha \in \A} \mathbb Z D_{\alpha}$ and $\HH^1(k, \oplus_{\overline{\alpha} \in \overline{\A}} \mathbb Z D_{\overline{\alpha}}) = \HH^1(k,\oplus_{\alpha \in \A} \mathbb Z D_{\alpha}) = 0$. Moreover, as $G$ and $X$ both have no non-constant invertible functions and $\emptyset \neq G(k) \subset X(k)$, we have $\HH^0(k,\Pic(\overline{X})) = \Pic(X)$ and $\HH^0(k,\Pic(\overline{G})) = \Pic(G)$ (see e.g.~\cite[Lem.~6.3(iii)]{San81}). Part (1) now easily follows. Part (2) similarly follows by taking Galois invariants. Part (3) follows immediately from \eqref{eqn:-K_X}.
\end{proof}

\section{Zero-loci of Brauer group elements} \label{sec:zero-loci} \label{sec:main}
In this section we prove our main result (Theorem \ref{thm:arbitrarybiglinebundles}),
which is a generalisation of Theorem \ref{thm:semi-simple} to height functions
attached to arbitrary big line bundles.

\subsection{Set-up}  \label{sec:set-up}
Throughout $G$ is an adjoint semi-simple algebraic group over a number field $k$. 
We choose Haar measures $\mathrm{d}g_v$ on each $G(k_v)$ such that 
$G(\OO_v)$ has measure $1$ for all but finitely many $v$. The product of these measures converges
to a well-defined measure $\mathrm{d} g$ on $G(\Adele_k)$.
We choose our measures so that $\vol(G(\Adele_k)/G(k)) = 1$ for the induced quotient measure.
We let $G'$ denote the quasi-split inner form of $G$.

Let $\br \subset \Br_1 G$ be a finite subgroup such that $G(k)_\br \neq \emptyset$. By changing the group law of $G$ if necessary, we may take any element of $G(k)_\br$ to be the identity element. We may therefore assume that $e \in G(k)_\br$, i.e.~that $\br \subset \Br_e G$ (see \eqref{def:Br_e}).
We denote by $\R$ the group of automorphic characters of $G$
attached to $\br$ via Theorem \ref{thm:Br_e(G)}.  For a place $v$ of $k$,
we denote by $\thorn_v: G(k_v) \to \{0,1\}$ the indicator function of $G(k_v)_\br = \cap_{\rho \in \R} \ker \rho_v$. We let
$\thorn = \prod_v \thorn_v: G(\Adele_k) \to \{0,1\}$ be the product of the local indicator functions with zero locus $G(\Adele_k)_\br$. Note that the Hasse principle
for $\Br k$ (see \eqref{seq:CFT}) implies that $G(k)_\br = G(k) \cap G(\Adele_k)_\br$.

We let $G \subset X$ be the wonderful compactification of $G$ with set of boundary
divisors $\A$ (see \S \ref{sec:wonderful}). For $\alpha \in \A$ 
we let the $\kappa_\alpha$ be as in Proposition \ref{prop:Picard} and let $k_\alpha$
be the field of constants of $D_\alpha$ (i.e.~the algebraic closure of $k$ inside the function field
of $D_\alpha$).

We have the following description for a maximal torus of $G'$ via the Weil restriction.

\begin{lemma} \label{lem:WR}
	Let $B' \subset G'$	be a Borel subgroup and $T' \subset B'$ a maximal torus. Then
	\begin{equation} \label{eqn:Weil_restriction}
		T' \cong \prod_{\alpha \in \A} \Res_{k_\alpha/k} \Gm.
	\end{equation}
\end{lemma}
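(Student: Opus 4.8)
The plan is to use the structure theory of the wonderful compactification restricted to the maximal torus, together with the Galois-equivariant dictionary of Lemma \ref{lem:spherical}(4). First I would pass to the quasi-split form $G'$ with Borel $B' \subset G'$ and maximal torus $T' \subset B'$, and consider the toric variety $Z \subset X^{\circ}$ associated to $T'$ as in \S\ref{sec:wonderful}. By Lemma \ref{lem:spherical}(1), $Z$ is a smooth toric variety for $T'$, and by parts (3)--(4) of that lemma the boundary divisors $E_{\overline{\alpha}}$ of $\overline{T'} \subset \overline{Z}$ are in Galois-equivariant bijection with the simple roots of $\overline{G'}$, equivalently with $\overline{\A}$, and each $E_{\overline{\alpha}}$ is the divisor of the corresponding simple root $f_{\overline{\alpha}}$ viewed as a regular function on $\overline{T'}$. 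Since $G'$ is adjoint, the simple roots form a basis of the character lattice $X^*(\overline{T'})$, so $\overline{Z}$ is the smooth toric variety whose fan realizes $\overline{T'} \cong \Gm^{|\overline{\A}|}$ with the $\overline{\alpha}$-th coordinate given by $f_{\overline{\alpha}}$; concretely $\overline{Z} \cong \mathbb{A}^{|\overline{\A}|}$ with $\overline{T'}$ the open torus, matching the description $U^- \times \mathbb{A}^{|\A|} \times U \cong X^{\circ}$ used in the proof of Lemma \ref{lem:spherical}.

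The key point is then to track the Galois action. The character lattice $X^*(\overline{T'})$ has as a $\ZZ$-basis the simple roots $\{f_{\overline{\alpha}}\}_{\overline{\alpha} \in \overline{\A}}$, and by Lemma \ref{lem:spherical}(4) the Galois action on this basis is precisely the permutation action of $\Gal(\overline{k}/k)$ on $\overline{\A}$. Decomposing $\overline{\A}$ into Galois orbits, each orbit corresponds to some $\alpha \in \A$ with stabilizer $\Gal(\overline{k}/k_\alpha)$ (where $k_\alpha$ is the field of constants of $D_\alpha$, by definition of the Galois-equivariant bijection between $\overline{\A}$ and the geometric points above $\A$). Hence as a Galois module
$$X^*(\overline{T'}) \cong \bigoplus_{\alpha \in \A} \ZZ[\Gal(\overline{k}/k)/\Gal(\overline{k}/k_\alpha)] = \bigoplus_{\alpha \in \A} \mathrm{Ind}_{k_\alpha}^{k} \ZZ.$$
Dualizing and applying the anti-equivalence between tori over $k$ and their Galois character lattices, together with the fact that the Weil restriction $\Res_{k_\alpha/k}\Gm$ has character lattice $\mathrm{Ind}_{k_\alpha}^k \ZZ$, yields $T' \cong \prod_{\alpha \in \A} \Res_{k_\alpha/k}\Gm$.

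The main obstacle I anticipate is the bookkeeping identifying the field of constants $k_\alpha$ of the boundary divisor $D_\alpha \subset X$ with the fixed field of the stabilizer of the corresponding Galois orbit in $\overline{\A}$: one must check that the Galois-equivariant bijection of Lemma \ref{lem:spherical}(4) between boundary divisors of $\overline{G} \subset \overline{X}$ and those of $\overline{T'} \subset \overline{Z}$ is compatible with the decomposition of $D_\alpha$ over $\kbar$ into its geometric components, so that $k_\alpha$ really is the residue-field-of-constants attached to the orbit. This is essentially formal once one recalls that $D_\alpha$ is defined over $k$ and its geometric components are permuted transitively by $\Gal(\overline{k}/k)$ — but it requires care since $D_\alpha$ may be singular (as noted in \S\ref{sec:intro} for $\PGL_n$), so one should work with the transverse intersections $E_\alpha = D_\alpha \cap Z$ from Lemma \ref{lem:spherical}(2), which are integral, and use these as the bridge. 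Everything else is a routine application of the standard anti-equivalence of categories between algebraic tori and Galois lattices.
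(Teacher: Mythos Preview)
Your proposal is correct and follows essentially the same approach as the paper: identify $X^*(\overline{T}')$ with the free abelian group on the simple roots (using that $G'$ is adjoint and quasi-split), invoke Lemma~\ref{lem:spherical}(4) to see that the Galois action on this basis is the permutation action on $\overline{\A}$, and then apply the anti-equivalence between tori and Galois lattices. The paper's proof compresses this into three lines (``the result easily follows''), whereas you have spelled out the orbit decomposition, the identification of $k_\alpha$ with the fixed field of the stabiliser, and the Weil-restriction character-lattice computation; your anticipated obstacle about $k_\alpha$ is indeed formal and handled exactly as you suggest.
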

\begin{proof}
	As $G'$ is quasi-split and adjoint, the simple roots form a basis of the character group $X^*(\overline{T}')$. 
	However, the simple
	roots are permuted by the Galois action in the same way as the divisors $D_\alpha$ (see Lemma \ref{lem:spherical});
	the result easily follows.
\end{proof}

\subsection{Heights} \label{sec:heights}
We assume that the reader is familiar with the 
theory of heights attached to adelic metrics on line bundles, as can be found, for example, in \cite[\S2.1, 2.2]{CLT10}.
Denote by $\mathcal{T} = \CC^\A$ the complex vector space with basis given by the elements of $\A$.
We define an adelic complexified height pairing
$$H(\sbf, g): \quad \CC^{\A} \times G(\Adele_k) \to \CC$$
as well as local height pairings, as follows.
Choose smooth adelic metrics $|| \cdot ||_\alpha$ on the line bundles $\OO_X(D_\alpha)$ and let $d_\alpha$
be a choice of global section of $\OO_X(D_\alpha)$ which vanishes on $D_\alpha$. 
We define
\begin{equation} \label{def:height}
H_v(\sbf, g_v) = \prod_{\alpha \in \A}||d_\alpha(g_v)||_v^{-s_\alpha}, \quad
H(\sbf,(g_v)_{v})= \prod_{v}H_v(\sbf, g_v).
\end{equation}
Note that $H_v$ depends on the choice of the $d_\alpha$, but
the adelic height $H$ is independent of the $d_\alpha$ by the product formula.
From Proposition \ref{prop:Picard}, the natural map $\ZZ^\A \to \Pic(X)$ has finite cokernel,
hence induces an isomorphism $\CC^\A \cong \Pic(X)_\CC:=\Pic(X)\otimes_\ZZ\CC$. This therefore yields an adelic
complexified height pairing $\Pic(X)_\CC \times G(\Adele_k) \to \CC$.

For example, define $\underline{\kappa} = (\kappa_\alpha + 1)_{\alpha \in \A}$ (see Proposition \ref{prop:Picard}).
Then $H(\underline{\kappa},\cdot)$
is a choice of anticanonical height function on $X$.
For $c \in \RR$, we let 
\begin{equation} \label{def:T_c}
	\mathcal{T}_c = \{ (s_\alpha) \in \CC^\A: \re s_\alpha > \kappa_\alpha + 1 + c, \forall \alpha \in \A\}.
\end{equation}
We use this notation since, as we shall prove, the height zeta
function is absolutely convergent on $\mathcal{T}_0$ and has singularities
along the hyperplanes  $\re s_\alpha = \kappa_\alpha + 1$. 

\begin{remark} \label{rem:heights}
Note that given a line bundle $L$, every smooth adelic metric on $L$ arises via \eqref{def:height}. Explicitly, choose a basis of $\Pic(X)_\QQ$ which contains $[L]$ together with adelic
metrics on each basis element. Expressing each $D_\alpha$ in terms of this new basis yields, via the above construction,
the original metric on $L$.
\end{remark}


We next consider the bi-invariance of the height and $\thorn$.
\begin{lemma} \label{lem:K_v} 
	For any nonarchimedean place $v$ of $k$, there exists a compact open subgroup $\K_v \subset G(k_v)$
	such that both $H_v$ and $\thorn_v$
	are bi-$\K_v$-invariant.
	Moreover, one may take $\K_v = G(\OO_v)$ for all but finitely many $v$.
\end{lemma}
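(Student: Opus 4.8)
The plan is to prove this by combining two facts, one for the height $H_v$ and one for the indicator function $\thorn_v$, each of which is essentially already in our hands. For $\thorn_v$, Lemma~\ref{lem:K-invariant}(2) provides a compact open subgroup under which $\thorn_v$ is bi-invariant, equal to $G(\OO_v)$ for all but finitely many $v$. So the real content is the statement for $H_v$, and the strategy is to show that for each nonarchimedean $v$ there is a compact open $\K_v' \subset G(k_v)$ under which $H_v(\sbf,\cdot)$ is bi-invariant, with $\K_v' = G(\OO_v)$ outside a finite set; then one takes $\K_v = \K_v' \cap \K_v''$ where $\K_v''$ is the group from Lemma~\ref{lem:K-invariant}(2), which is again $G(\OO_v)$ for almost all $v$.

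First I would recall that $H_v(\sbf,g_v) = \prod_{\alpha \in \A} \|d_\alpha(g_v)\|_v^{-s_\alpha}$, so it suffices to show that each metric $\|\cdot\|_\alpha$ on $\OO_X(D_\alpha)$, evaluated on the section $d_\alpha$, gives a function on $G(k_v)$ that is locally constant — hence bi-invariant under a compact open subgroup by compactness of $G(\OO_v)$ (or of any compact open neighbourhood of the identity, together with the fact that $G(k_v)$ is compactly generated modulo... actually local constancy alone on all of $G(k_v)$ does not give a single bi-invariance group, so one must argue more carefully). The cleanest route: a smooth adelic metric is by definition locally constant on $X(k_v)$ in the $v$-adic topology (for nonarchimedean $v$), so $g_v \mapsto \|d_\alpha(g_v)\|_v$ is locally constant on $G(k_v)$; since $G(\OO_v)$ is compact and open, the restriction of $H_v(\sbf,\cdot)$ to any coset $g_v G(\OO_v)$ takes finitely many values, but to get genuine bi-invariance under a fixed group one uses that the integral model of $X$ over $\OO_v$ together with the chosen model of the metric makes $H_v$ literally $G(\OO_v)$-bi-invariant for all but finitely many $v$ (standard: the metric, the section, and the $G \times G$-action all spread out over $\OO_v$ for almost all $v$), and for the finitely many bad $v$ one invokes local constancy plus a compactness/continuity argument. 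Concretely, for a bad $v$: the function $\phi_\alpha(g_v) = \|d_\alpha(g_v)\|_v$ is continuous and $G(k_v)$ acts continuously on $X(k_v)$; the set where $\phi_\alpha \neq 0$ is exactly $G(k_v)$ itself since $d_\alpha$ vanishes only on $D_\alpha$, and local constancy of $\phi_\alpha$ combined with the action map gives, for each $\alpha$, an open subgroup of $G(k_v)$ fixing the value $\phi_\alpha(e)$ and more generally acting through a discrete quotient on the values; intersecting over the finitely many $\alpha \in \A$ produces the desired $\K_v'$.

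The main obstacle I anticipate is making the "for all but finitely many $v$, $\K_v = G(\OO_v)$" claim rigorous for the height: this requires picking a suitable integral model $\mathcal{X}/\OO_k$ of $X$ (which exists since $X$ and the $G \times G$-action are defined over $k$), observing that the boundary divisors $D_\alpha$, the sections $d_\alpha$, and the chosen smooth adelic metrics all extend to this model over $\OO_{k,S}$ for a suitable finite set $S$ of places, and then that for $v \notin S$ the metric $\|\cdot\|_v$ is the "model metric" for which $\|d_\alpha(g_v)\|_v$ depends only on the reduction of $g_v$ in $\mathcal{X}(\FF_v)$ — hence is $G(\OO_v)$-bi-invariant because $G(\OO_v)$ acts on $\mathcal{X}(\OO_v)$ compatibly with reduction, and the two actions agree with the generic ones. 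This is routine but needs the definition of smooth adelic metric from \cite[\S2.1, 2.2]{CLT10} to be unwound; I would phrase it as: "by the very definition of a smooth adelic metric, the collection $(\|\cdot\|_\alpha)_v$ arises from an integral model for almost all $v$, and for such $v$ the function $H_v(\sbf,\cdot)$ is $G(\OO_v)$-bi-invariant since the $G\times G$-action on $X$ extends to an action on this model". Combining the height group with the $\thorn_v$ group from Lemma~\ref{lem:K-invariant}(2) and intersecting completes the proof, noting the intersection of two subgroups each equal to $G(\OO_v)$ for almost all $v$ is again $G(\OO_v)$ for almost all $v$.
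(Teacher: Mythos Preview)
Your proposal is correct and follows the same approach as the paper: for $\thorn_v$ you invoke Lemma~\ref{lem:K-invariant}(2), and for $H_v$ you sketch the standard argument via integral models and local constancy of smooth adelic metrics, then intersect the two subgroups. The paper's own proof is simply a two-line citation---it invokes \cite[Prop.~6.3]{JAMS} for the height and Lemma~\ref{lem:K-invariant} for $\thorn_v$---so your sketch of the height argument is essentially what that reference contains; the one place you could tighten is the bad-$v$ argument, where the clean statement is that each $\|d_\alpha(\cdot)\|_v$ extends to a locally constant function on the compact set $X(k_v)$, hence by continuity of the $G(k_v)\times G(k_v)$-action there is an open subgroup acting trivially on its finite set of values.
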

\begin{proof}
	This property of the height is shown in \cite[Prop.~6.3]{JAMS}, 
	whilst the analogue for $\thorn_v$ is Lemma \ref{lem:K-invariant}.
\end{proof}

For every non-archimedean place $v$ of $k$, we now fix the choice of such a $\K_v$ satisfying Lemma \ref{lem:K_v}. 
and let $\K_0 = \prod_{v \nmid \infty} \K_v$. 
\subsection{Review of automorphic forms}  

Here we review some notation and results from the theory of automorphic forms, following Arthur's exposition \cite[\S1-3]{Arthur78}.

\subsubsection{Notation and basic definitions}
Fix a minimal parabolic subgroup $B$ of $G$. We denote standard parabolic subgroups of $G$ by $P$. For such a parabolic subgroup, let 
\begin{itemize}
\item $M_P$, the Levi factor, 
\item $N_P$, the unipotent radical, 
\item $A_P$, the split component of the center of $M_P$, 
\item $\mathfrak X^*(M_P)_k$, the group of characters of $M_P$ defined over $k$, 
\item $\mathfrak a_P = \Hom(\mathfrak X^*(M_P)_k, \RR)$, $\mathfrak a_P^* = \mathfrak X^*(M_P)_k \otimes \RR$, 
\item $(\mathfrak a_P)_\CC = \mathfrak a_P \otimes_\RR \CC$, $(\mathfrak a_P^*)_\CC = \mathfrak a_P^* \otimes_\RR \CC$,
\item $\Delta_P$, the set of the simple roots of $(P, A_P)$, \item
$\mathfrak a_P^+ = \{ \mathsf{H} \in \mathfrak a_P; \alpha(\mathsf{H}) > 0 \text{ for all } \alpha
\in \Delta_P\}$, 
\item $(\mathfrak a_P^*)^+ = \{ \Lambda \in \mathfrak a_P^*; \Lambda(\check{\alpha}) > 0
\text{ for all } \alpha \in \Delta_P\},$ where in this expression $\check\alpha$ is the coroot associated with $\alpha$, 
\item $n(A_P)$, be the number of chambers in $\mathfrak a_P$.
\end{itemize}

If there is no confusion, we drop the subscript $P$. 
Also, recall our choices of the groups $\K_v$ following Lemma \ref{lem:K_v}.  For each archimedean place $v$ of $k$, we choose a maximal compact subgroup $\K_v \subset G(k_v)$ such that 
$$
G(k_v) = \K_v A_B(k_v) \K_v. 
$$
We set $\K = \prod_v \K_v$, $\K_0 = \prod_{v \text{ non-arch.}} \K_v$, $\K_\infty = \prod_{v \text{ arch.}} \K_v$.

\

Let ${\mathsf W}$ be the restricted Weyl group of $(G, A_B)$.  The group ${\mathsf W}$ naturally acts on $\mathfrak a_B$ and $\mathfrak a_B^*$. For any $\mathsf s\in {\mathsf W}$, fix a representative $w_{\mathsf s}$ in the
intersection of $G(k)$ with the normalizer of $A_B$. For parabolic subgroups $P_1, P_2$, let ${\mathsf W}(\mathfrak a_{P_1}, \mathfrak a_{P_2})$ be the collection of distinct isomorphisms $\mathfrak a_{P_1} \to \mathfrak a_{P_2}$ obtained by
restricting elements of ${\mathsf W}$ to $\mathfrak a_{P_1}$. A pair of parabolic subgroups $P_1, P_2$ are called {\em associated} if ${\mathsf W}(\mathfrak a_{P_1}, \mathfrak a_{P_2})$ is not empty.  This is an equivalence relation. 

\

Let $P$ be a parabolic subgroup with Levi factor $M$ and unipotent radical $N$. For $m = (m_v)_v \in M(\Adele_k)$, we define an element $\mathsf H_M(m) \in \mathfrak a_P$
by
\begin{equation}
e^{\langle \mathsf H_M(m) , \chi \rangle} = |\chi(m)| = \prod_v
|\chi(m_v)|_v
\end{equation}
for all $\chi \in \mathfrak X^*(M)_k$. This is a homomorphism
\begin{equation}
M(\Adele_k) \longrightarrow \mathfrak a_P.
\end{equation}
We let $M(\Adele_k)^1$ be the kernel. By Iwasawa decomposition, any $x \in G(\Adele_k)$ can be written as
$nma\kappa$ with $n \in N(\Adele_k), m \in M(\Adele_k)^1, a \in \mathsf \prod_{v \text{ arch.}}A_P(k_v)^0, \kappa \in
\K$. Set $\mathsf H_P(x) := \mathsf H_M(a) \in \mathfrak a_P$. There is a vector $\rho_P \in(\mathfrak a_P^*)^+$ such that
\begin{equation}
\delta_P(p) = |\det (Ad\, p|_{\mathfrak n_P(\Adele_k)})| = e^{2\rho_P(\mathsf H_P(p))}
\end{equation}
for all $p\in P(\Adele_k)$. 

\

We now explain the normalization of measures. Recall that we chose a Haar measure on $G(\Adele_k)$ in \S \ref{sec:set-up}. For any subgroup $V$ of $N_B$, the unipotent radical of the minimal parabolic subgroup $B$, we choose a Haar measure on $V(\Adele_k)$ so that $V(k)\backslash V(\Adele_k)$ has volume  one. We equip $\K$ with the Haar measure with total volume one. Fix Haar measures on each of the vector spaces $\mathfrak a_P$ so that the lattice $\Hom(\mathfrak X^*(M_P)_k, \ZZ) \subset \mathfrak a_P$ has covolume $1$, and use the dual measures on $\mathfrak a_P^*$.   For any standard parabolic $P$, there is a unique measure on $M_P(\Adele_k)$ such that 
$$
\int_{G(\Adele_k)} f(x) \, \mathrm{d}x = \int_{N_P(\Adele_k)}\int_{M_P(\Adele_k)}\int_\K f(nm\kappa) e^{-2 \rho_P(\mathsf H_P(m))} \, \mathrm{d}n \, \mathrm{d}m \, \mathrm{d}\kappa. 
$$
The map $\mathsf H_M: M_P(\Adele_k) \to \mathfrak a_P$ determines an isomorphism $M_P(\Adele_k)/M_P(\Adele_k)^1 \to \mathfrak a_P$.  Since we already have a Haar measure on the group $M_P(\Adele_k)$, there is a unique Haar measure on the group $M_P(\Adele_k)^1$ such that the quotient measure on $M_P(\Adele_k)/M_P(\Adele_k)^1$ is the one obtained by transport of structures from $\mathfrak a_P$. 

\subsubsection{Eisenstein series} 
We now recall the definition of Eisenstein series in general. We will do this in several stages. 

\

1. Let $M$ be the Levi factor of some standard parabolic subgroup $P$ of $G$.  
Let $L^2_{cusp}(M(k) \backslash M(\Adele_k)^1)$ be the space of
functions $\phi$ in $L^2(M(k) \backslash M(\Adele_k)^1)$ such that
for any parabolic $Q \subsetneqq P$ we have
\begin{equation}
\int_{N_Q(k) \cap M(k) \backslash N_Q(\Adele_k) \cap M(\Adele_k)}
\phi(nm) \, \mathrm{d}n = 0
\end{equation}
for almost all $m$. It is known (\cite[Lemma 5 and its corollary]{Arthur79}) that 
\begin{equation}
L^2_{cusp}(M(k) \backslash M(\Adele_k)^1) = \bigoplus_\varrho V_\varrho
\end{equation}
where $\varrho$ ranges over all irreducible unitary representations of $M(\Adele_k)^1$, and $V_\varrho$ is the $\varrho$-isotypic component of $\varrho$ consisting
of finitely many copies of $\varrho$ (possibly zero).

\

2. We define an equivalence relation on the set of pairs $(M, \varrho)$ with $M$ a Levi factor of some standard parabolic subgroup of $G$ and $\varrho$ an irreducible unitary representation of $M(\Adele_k)^1$ occurring in $L^2_{cusp}(M(k) \backslash M(\Adele_k)^1)$. For two such pairs $(M, \varrho)$ and $(M', \varrho')$, set  $(M, \varrho) \sim (M', \varrho')$  if there are parabolic subgroups $P, P'$ with Levi factors $M, M'$, respectively, such that there is an $\mathsf s
\in {\mathsf W}(\mathfrak a_P, \mathfrak a_{P'})$ with the property that the representation
\begin{equation}
(\mathsf s\varrho)(m') = \varrho(w_{\mathsf s}^{-1} m' w_{\mathsf s}) \,\,\,\,\,(m' \in M'(\Adele_k)^1)
\end{equation}
is unitarily equivalent to $\varrho'$. Let $\mathfrak X$ be the set of
equivalence classes. For any $\chi \in \mathfrak X$ we have a
class $\mathcal{P}_\chi$ of associated parabolic subgroups consisting of those parabolic subgroups $P$ with a Levi subgroup $M$ and a representation $\rho$ such that $(M, \rho) \in \chi$. 

\

3. If $M$ is the Levi factor of some parabolic subgroup and $\chi \in \mathfrak X$, set
\begin{equation}
L^2_{cusp}(M(k) \backslash M(\Adele_k)^1)_\chi = \bigoplus_{(\varrho: (M,
\varrho) \in \chi)}V_\varrho.
\end{equation}
This is a closed subspace of $L^2_{cusp}(M(k) \backslash
M(\Adele_k)^1)$, and zero if $P \notin \mathcal{P}_\chi$ for every parabolic subgroup $P$ that has $M$ as a Levi factor. Then we have 
\begin{equation}
L^2_{cusp}(M(k) \backslash M(\Adele_k)^1)= \bigoplus_{\chi \in \mathfrak X}L^2_{cusp}(M(k) \backslash M(\Adele_k)^1)_\chi.
\end{equation}

4. Let $M$ be the Levi factor of some parabolic subgroup $P$, and fix an equivalence class $\chi \in \mathfrak X$. Suppose there is a $P_1 \in \mathcal{P}_\chi$ such that $P_1 \subset P$. Let $\psi$ be a smooth
function on $N_{P_1}(\Adele_k)M_{P_1}(k) \backslash G(\Adele_k)$ such that
\begin{equation}
\Psi_a(m, \kappa) = \psi(am\kappa), \quad \kappa \in \K, m \in
M_{P_1}(k)\backslash M_{P_1}(\Adele_k), a \in A_{P_1}(k)\backslash A_{P_1}(\Adele_k)
\end{equation}
vanishes outside a compact subset of $A_{P_1}(k)\backslash
A_{P_1}(\Adele_k)$, transforms under $\K_\infty$ according to an irreducible
representation, and as a function of $m$ belongs to the space
$L^2_{cusp}(M_{P_1}(k) \backslash M_{P_1}(\Adele_k)^1)$. Then the function
\begin{equation}
\hat{\psi}^M(m) = \sum_{\gamma \in P_1(k) \cap M(k) \backslash
M(k)} \hspace{-20pt} \psi(\gamma m), \quad \quad m \in M(k) \backslash
M(\Adele_k)^1
\end{equation}
is square-integrable on $M(k) \backslash M(\Adele_k)^1$. Define
$L^2(M(k) \backslash M(\Adele_k)^1)_\chi$ to be the span of all such
$\hat{\psi}^M$. If no such $P_1$ exist, the latter space is defined to be the zero space. By \cite[Lemma 2]{Langlands65} for $\QQ$, and 
	\cite[II.2.4.Proposition]{MW} for the general case, we have
\begin{equation}
L^2(M(k) \backslash M(\Adele_k)^1) = \bigoplus_\chi L^2(M(k)
\backslash M(\Adele_k)^1)_\chi.
\end{equation}

\

5. If $M$ is the Levi factor of some standard parabolic subgroup $P$, let $\Pi(M)$ be the set of equivalence classes of
irreducible unitary representations of $M(\Adele_k)$. For $\zeta \in
\mathfrak (a_P^*)_\CC$ and $\pi \in \Pi(M)$ let $\pi_\zeta$ be the product of
$\pi$ with the quasi-character
\begin{equation}\label{eq:pizeta}
x \mapsto e^{\zeta \mathsf H_P(x)} \,\,\,\,\,\,(x \in G(\Adele_k)).
\end{equation}
If $\zeta \in i\mathfrak a_P^*$ then $\pi_\zeta$ is again unitary. This gives
$\Pi(M)$ the structure of a differentiable manifold, with infinitely many connected components, which carries an action of
$i\mathfrak a_P^*$. Since the connected components are identified with $i \mathfrak a_P^*$, we also obtain a measure $\mathrm{d}\pi$ on
$\Pi(M)$ .

\

6. Let $P$ be a standard parabolic subgroup. For $\pi \in \Pi(M_P)$ we let $\mathcal{V}^0_P(\pi)$ be the space of
smooth functions
\begin{equation}
\phi: N_P(\Adele_k) M_P(k) \backslash G(\Adele_k) \to \CC
\end{equation}
satisfying
\begin{enumerate}
\item $\phi$ is right $\K$-finite;
\item for every $x \in G(\Adele_k)$ the function $$m \mapsto
\phi(mx),\quad m \in M_P(\Adele_k)$$ is a matrix coefficient of $\pi$;
\item $\| \phi \|^2 = \int_\K \int_{M_P(k)\backslash M_P(\Adele_k)^1}
|\phi(mk)|^2 \, dm \, d\kappa < +\infty$.
\end{enumerate}
Let $\mathcal{V}_P(\pi)$ be the completion of $\mathcal{V}^0_P(\pi)$ with respect to $\|\cdot \|$. For $\phi \in
\mathcal V_P(\pi)$ and $\zeta \in (\mathfrak a_P^*)_\CC$ set
\begin{equation}
\phi_\zeta(x)  = \phi(x) e^{\zeta(\mathsf H_P(x))}, \quad x \in G(\Adele_k)
\end{equation}
and
\begin{equation}
(I_P(\pi_\zeta, y)\phi_\zeta)(x) = \phi_\zeta(xy)
\delta_P(xy)^{\frac{1}{2}} \delta_P(x)^{-\frac{1}{2}}.
\end{equation}
Then $I_P(\pi_\zeta)$ is a unitary representation if $\zeta \in i\mathfrak a_P^*$. Note that $I_P(\pi_\zeta)$ is the global parabolically induced representation of $\pi_\zeta$ from $P$ to $G$. 

\

7. Given $\chi \in \mathfrak X$, let $\mathcal V_P(\pi)_\chi$ be the closed subspace of
$\mathcal V_P(\pi)$ consisting of those $\phi$ such that for all $x$ the
function $m \mapsto \phi(mx)$ belongs to $L^2(M_P(k) \backslash
M_P(\Adele_k)^1)_\chi$. Then
\begin{equation}\label{eq:Hp}
\mathcal V_P(\pi) = \bigoplus_{\chi} \mathcal V_P(\pi)_\chi.
\end{equation}
Let $\mathcal V_P(\pi)_{\chi, \K_0}$ be the subspace of functions in
$\mathcal V_P(\pi)_\chi$ which are invariant under $\K_0$. Let $W$ be an equivalence class of irreducible representations of $\K_\infty$, and define
$\mathcal V_P(\pi)_{\chi, \K_0, W}$ to be the space of those functions in $\mathcal V_P(\pi)_{\chi,
\K_0}$ which transform under $\K_\infty$ according to $W$. By \cite[\S 7]{Langlands76} each of the spaces $\mathcal V_P(\pi)_{\chi, \K_0, W}$ is finite-dimensional. We fix an orthonormal basis
$\mathcal B_P(\pi)_\chi$ for $\mathcal V_P(\pi)_\chi$, for each $\pi$ and each $\chi$, 
  such that for all $\zeta \in i \mathfrak a^*$ we have
\begin{equation}\label{eq:Bpi}
\mathcal B(\pi_\zeta)_\chi = \{ \phi_\zeta : \phi \in \mathcal B_P(\pi)_\chi \}
\end{equation}
and such that every $\phi \in\mathcal B_P(\pi)_{\chi}$ belongs to one of the
spaces $\mathcal V_P(\pi)_{\chi, \K_0, W}$.

\

8. Let $M$ be the Levi factor of some standard parabolic subgroup $P$. Suppose $\pi \in \Pi(M)$, $ \phi \in \mathcal B_P(\pi)_\chi $.  We would like to define an Eisenstein series $E(x, \phi)$. If we formally set
\begin{equation}
E(x, \phi) = \sum_{\gamma \in P(k) \backslash G(k)}
\phi(\gamma x) \delta_P(\gamma x)^{\frac{1}{2}}, 
\end{equation}
then the series does not converge. For this reason we have to use analytic continuation. For  $\phi \in \mathcal V_P^0(\pi)$, and  $\zeta \in
(\mathfrak a_P^*)_\CC$ with $\re(\zeta) \in \rho_P + (\mathfrak a_P^*)^+$ we define  
\begin{equation}
E(x, \phi_\zeta) = \sum_{\gamma \in P(k) \backslash G(k)}
\phi_\zeta(\gamma x) \delta_P(\gamma x)^{\frac{1}{2}}.
\end{equation}
For such $\zeta$  the series is absolutely convergent. The function $E(x, \phi_\zeta)$ 
can be analytically continued to a meromorphic function on $(\mathfrak a_P^*)_\CC$. For $\zeta \in i\mathfrak a_P^*$, the analytically continued $E(x, \phi_\zeta)$ is a smooth function of $x$. The value of this analytically continued function at $\zeta=0$ is what we denote by $E(x, \phi)$.  The map that sends a form $\phi_\zeta$ to the Eisenstein series $E(\cdot , \phi_\zeta)$, when defined, gives an intertwining map from $I_P(\pi_\zeta)$ to $L^2(G(k)\backslash G(\Adele_k))$.

\subsection{Spectral expansion} 

Let $F$ be a function on $G(k) \backslash G(\Adele_k)$. By
the {\em spectral expansion} of $F$ we mean
\begin{equation}\label{spectral-identity-general}
S(F, x) := \sum_{\chi \in \mathfrak X}\sum_{P}  n(A_P)^{-1}  \hspace{-10pt}
 \int\limits_{\Pi(M_P)}\int\limits_{G(k) \backslash G(\Adele_k)}\hspace{-9pt}
 \left(\sum_{\phi \in \mathcal B_P(\pi)_{\chi}} \hspace{-10pt} E(x, \phi) \overline{E(y, \phi)}\right)F(y) \, \mathrm{d}y\, \mathrm{d}\pi.
\end{equation}

For $P = G$, the space $\Pi(G)$ is a discrete space parametrising  automorphic characters, irreducible cuspidal representations and other residual representations; moreover our normalisations imply that the measure $\mathrm{d} \pi$ is simply the counting measure in this case. 
 In general, if $F$ is an arbitrary $L^2$ function, there is no reason that $S(F, x)$ should be equal to $F$; in fact, in a lot of cases, $S(F, x)$ does not even converge. One of the main technical theorems of \cite{JAMS} is concerned with describing a set of conditions for a function $F$ so that $S(F, x) = F(x)$, with $S(F, x)$ interpreted appropriately, for {\em all} $x$.

\

We now describe the relevant result from \cite{JAMS}. Let $f$ be a smooth function on $G(\Adele_k)$, and suppose that $f$ is
right invariant under a compact-open subgroup $\K_0$ of $G(\Adele_f)$.
Define a function on $G(k) \backslash G(\Adele_k)$ by
$$
F(g) := \sum_{\gamma \in G(k)} f(\gamma g).
$$
Suppose that $f$ is such that the function $F$ is convergent for
all $g$, and defines a smooth and bounded function.

\begin{theorem}[Lemma 3.1 of \cite{JAMS}]\label{lem:spectral-expansion-general}
Let $H = H(\underline{\kappa},\cdot)$ be the adelic anticanonical height function on $G(\Adele_k)$ from \S\ref{sec:heights}.
There is a differential operator $\Delta$, chosen as in \cite[Lemma 4.1]{Arthur78} , and natural numbers $u, v$, such that if 
\begin{equation}\label{integral1-spectral}
\int_{G(\Adele_k)} |f(g)|\cdot  H(g)^u \, \mathrm{d}g
\end{equation}
and
\begin{equation}\label{integral2-spectral}
\int_{G(\Adele_k)} |\Delta^v f(g)|\cdot H(g)^u \, \mathrm{d}g
\end{equation}
converge, then 
\begin{equation}\label{integral3-spectral}
F(x) = S(F, x) \quad \quad \text{for all } x \in G(k)\backslash G(\Adele_k).
\end{equation}

\end{theorem}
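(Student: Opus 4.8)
The plan is to deduce the pointwise identity \eqref{integral3-spectral} from Langlands' $L^2$ spectral decomposition together with quantitative estimates for Eisenstein series. First I would recall that, since $F$ is smooth and bounded, it lies in $L^2(G(k)\backslash G(\Adele_k))$, hence equals the right-hand side $S(F,x)$ of \eqref{spectral-identity-general} \emph{in the $L^2$-sense}; the task is therefore to show that, under hypotheses \eqref{integral1-spectral} and \eqref{integral2-spectral}, the sum and integrals in \eqref{spectral-identity-general} converge absolutely and locally uniformly in $x$. Once that is done, $S(F,\cdot)$ is a continuous function agreeing almost everywhere with the continuous function $F$, hence everywhere, which is exactly \eqref{integral3-spectral}.

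The core step would be to estimate the spectral coefficient attached to each $\phi\in\mathcal B_P(\pi)_\chi$. Unfolding the Poincar\'e series against the ($G(k)$-invariant) Eisenstein series gives
\[
\int_{G(k)\backslash G(\Adele_k)} F(y)\,\overline{E(y,\phi)}\,\mathrm{d}y \;=\; \int_{G(\Adele_k)} f(y)\,\overline{E(y,\phi)}\,\mathrm{d}y .
\]
Since automorphic forms are of moderate growth and, on the wonderful compactification, $|E(y,\phi)|\ll H(y)^{u}$ with implied constant polynomial in $1+\|\nu_\pi\|$ (where $\nu_\pi$ denotes the norm of the infinitesimal character of $\pi$), the right-hand side is $\ll\int_{G(\Adele_k)}|f(g)|\,H(g)^{u}\,\mathrm{d}g$; this is where hypothesis \eqref{integral1-spectral} enters, and it already secures convergence over any finite set of spectral data. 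To gain decay in $\nu_\pi$, I would take $\Delta$ to be the operator of \cite[Lemma~4.1]{Arthur78}: a formally self-adjoint, right-invariant elliptic operator built from the archimedean Casimir elements, arranged so that on every induced representation $I_P(\pi)$, $\pi\in\Pi(M_P)$, it acts by a real scalar $\lambda_P(\pi)$ that is bounded below and grows like a fixed positive power of $1+\|\nu_\pi\|$. Right-invariance gives $\Delta^v F(g)=\sum_{\gamma\in G(k)}(\Delta^v f)(\gamma g)$, and self-adjointness together with $\Delta E(\cdot,\phi)=\lambda_P(\pi)E(\cdot,\phi)$ gives
\[
\int_{G(k)\backslash G(\Adele_k)} F(y)\,\overline{E(y,\phi)}\,\mathrm{d}y \;=\; \lambda_P(\pi)^{-v}\!\!\int_{G(\Adele_k)}(\Delta^v f)(y)\,\overline{E(y,\phi)}\,\mathrm{d}y ,
\]
so this coefficient is $\ll\lambda_P(\pi)^{-v}\int_{G(\Adele_k)}|\Delta^v f(g)|\,H(g)^{u}\,\mathrm{d}g$, which is where hypothesis \eqref{integral2-spectral} enters.

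The final step would assemble absolute convergence of \eqref{spectral-identity-general} by summing this bound over all data, combining: the decay factor $\lambda_P(\pi)^{-v}$; polynomial-in-$(1+\|\nu_\pi\|)$ bounds for $\sup_{x\in\Omega}|E(x,\phi)|$ over a fixed compactum $\Omega$; finite-dimensionality of the level-$\K_0$, $\K_\infty$-type spaces $\mathcal V_P(\pi)_{\chi,\K_0,W}$, with polynomial control of their dimensions and of the number of contributing $W$; the fact that $\Pi(M_P)$ is a countable union of copies of $i\mathfrak a_P^*$ on which $\mathrm d\pi$ is Lebesgue measure, so that the decay $\ll(1+\|\nu_\pi\|)^{-cv}$ makes each component-integral converge for $v$ large, while the Weyl-law bound that the number of discrete data with $\|\nu_\pi\|\le R$ is polynomial in $R$ (at fixed level) controls the sum over components; and finiteness of the set of standard parabolics together with summability over $\chi$. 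Taking $v$ (hence $u$) large enough then makes $S(F,x)$ absolutely and locally uniformly convergent, giving \eqref{integral3-spectral}.

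The hard part will be the uniform control of the Eisenstein series: one needs moderate-growth and sup-norm bounds with \emph{explicit} polynomial dependence on the spectral parameters, holding simultaneously across all cuspidal data $\chi$ and all standard parabolics $P$, together with the polynomial Weyl-type counting of the discrete spectrum at fixed level. These rest on the fine analytic theory of Eisenstein series of Langlands and Arthur, which is precisely why one would quote them rather than reprove them here; granted those inputs and the operator $\Delta$, the remaining estimates are routine bookkeeping.
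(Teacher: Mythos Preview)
The paper does not give its own proof of this statement: it is quoted as Lemma~3.1 of \cite{JAMS}, followed only by a remark clarifying the construction of the operator $\Delta$ over a general number field. So there is no in-paper proof to compare against.

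Your outline is the correct strategy and matches the argument in \cite{JAMS}: unfold the Poincar\'e series against the Eisenstein series, integrate by parts with $\Delta$ to pull out the factor $\Lambda(\phi)^{-v}$, and then sum using uniform moderate-growth bounds on Eisenstein series together with a Weyl-type count of the discrete data. Two small remarks. First, the hard convergence estimate you assemble from pieces at the end is precisely what the present paper later quotes as Proposition~\ref{important-proposition} (Proposition~3.5 of \cite{JAMS}): the finiteness of $\sum_\chi\sum_P n(A_P)^{-1}\int_{\Pi(M_P)}\sum_\phi \Lambda(\phi)^{-\ell}|E(e,\phi)|^2\,\mathrm d\pi$ for large $\ell$. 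In practice one cites this rather than rebuilding it from a Weyl law and dimension bounds; it already packages the Eisenstein-series sup bounds, the finite-dimensionality of the $(\K_0,W)$-isotypic pieces, and the summability over $\chi$ and $\pi$. Second, your phrase ``right-invariant'' for $\Delta$ is slightly off: $\Delta=\Omega_G-2\Omega_\K$ acts through the \emph{right regular representation} of $G_\infty\times\K_\infty$ and hence commutes with left translation by $G(k)$, which is what you use to pass $\Delta^v$ inside the Poincar\'e series. One must also treat separately the finitely many $\phi$ with $\Lambda(\phi)=0$ (handled directly by hypothesis~\eqref{integral1-spectral}), as Proposition~\ref{important-proposition} explicitly excludes them.
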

\begin{remark}
Arthur uses the notation $Z$ for the operator $\Delta$.  Since Arthur works only over $\QQ$ we explain the construction for an arbitrary number field.  Let $G_\infty = \prod_{v \text{ arch.}} G(F_v)$ and $\K_\infty = \prod_{v \text{ arch.}} \K_v$. Let $\Omega_G$ and $\Omega_\K$ be the Casimir operators of the Lie groups $G_\infty$ and $\K_\infty$, respectively. We define $\Delta$, sometimes called the Laplacian, to be the operator $\Omega_G -2 \Omega_\K$. It is not hard to see that $\Delta$ is elliptic, and so all of its eigenvalues are non-negative real numbers. Much of the theory of automorphic forms is concerned with understanding the discrete part of the spectrum of $\Delta$. Once the discrete part is understood, the theory of Eisenstein series provides the tools to understand the continuous part of the spectrum. 
\end{remark}

We will also need the following proposition: 

\begin{proposition}[Proposition 3.5 of \cite{JAMS}] \label{important-proposition}
For $\phi \in \mathcal B_P(\pi)_{\chi}$ define $\Lambda(\phi)$
by $\Delta\cdot \phi = \Lambda(\phi)\cdot \phi$. Then there is
an $\ell>0$ such that 
\begin{equation}
\sum_{\chi \in \mathfrak X}\sum_{P} n(\mathsf A_P)^{-1} \int_{\Pi(M_P)}\left( \sum_{\phi
\in \mathcal B_P(\pi)_{\chi}} \Lambda(\phi)^{-\ell}|E(e, \phi)|^2 \right)\, \mathrm{d}\pi
\end{equation}
is convergent.
The outermost summation is only over those classes $\chi$ for which there is a $\K_0$-fixed $\phi$ with $\Lambda(\phi) \ne 0$ and the inner most summation is over such $\phi$. Note $\Lambda(\phi) > 0$.  
\end{proposition}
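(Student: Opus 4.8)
We sketch a proof, following the strategy used for the analogous estimates in \cite{JAMS}. The plan is to feed a carefully chosen test function into the spectral expansion identity of Theorem~\ref{lem:spectral-expansion-general} and evaluate both sides at the identity. Fix $\ell \in \NN$ and a constant $c_0 > 0$, and let $g_{c_0,\ell}$ be the Bessel--potential kernel of $\Delta$ on $G_\infty = \prod_{v \mid \infty} G(k_v)$, i.e.\ the real symmetric distribution on $G_\infty$ for which right convolution realises the operator $(c_0 + \Delta)^{-\ell}$; for $\ell$ large it is a $C^m$ function (and, if one replaces the underlying point mass by a narrow smooth bump, a genuinely smooth one -- this is harmless), and it decays like $e^{-c\, d(e,\cdot)}$ on $G_\infty$ with a decay rate $c = c(c_0,\ell)$ tending to $\infty$ as $c_0 \to \infty$. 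Set
$$ f = g_{c_0,\ell} \otimes \tfrac{1}{\vol(\K_0)} \mathbf 1_{\K_0}, \qquad F(g) = \sum_{\gamma \in G(k)} f(\gamma g). $$

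Because $g_{c_0,\ell}$ and all of its $\Delta$-derivatives decay faster than the polynomial growth of the anticanonical height $H$, the integrability conditions \eqref{integral1-spectral}--\eqref{integral2-spectral} are satisfied; and, by reduction theory, if $c_0$ is taken large enough the exponential decay of $g_{c_0,\ell}$ dominates the modular growth along every rational cusp, so that $F$ converges for all $g$ and is smooth and bounded. Theorem~\ref{lem:spectral-expansion-general} then gives $F = S(F,\cdot)$. Right convolution by $\tfrac{1}{\vol(\K_0)}\mathbf 1_{\K_0}$ acts as the identity on the $\K_0$-fixed vectors $\phi \in \mathcal B_P(\pi)_\chi$, while right convolution by $g_{c_0,\ell}$ realises $(c_0+\Delta)^{-\ell}$, which acts on $E(\cdot,\phi)$ by the scalar $(c_0 + \Lambda(\phi))^{-\ell}$ since $\Delta E(\cdot,\phi) = \Lambda(\phi)E(\cdot,\phi)$. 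Unfolding $\langle F, E(\cdot,\phi)\rangle = \int_{G(\Adele_k)} f(g)\overline{E(g,\phi)}\,\mathrm{d}g$ therefore yields $(c_0+\Lambda(\phi))^{-\ell}\overline{E(e,\phi)}$, and evaluating $F(e) = S(F,e)$ gives
$$ \sum_{\chi \in \mathfrak X}\sum_P n(A_P)^{-1} \int_{\Pi(M_P)} \Big( \sum_\phi (c_0+\Lambda(\phi))^{-\ell}\,|E(e,\phi)|^2 \Big)\,\mathrm{d}\pi \;=\; F(e) \;=\; \tfrac{1}{\vol(\K_0)}\sum_{\substack{\gamma \in G(k)\\ \gamma \in \K_0}} g_{c_0,\ell}(\gamma). $$
The right-hand side is finite: $\{\gamma \in G(k) : \gamma \in \K_0\}$ is a lattice in $G_\infty$, and the sum converges precisely because, for $c_0$ large, $g_{c_0,\ell}$ decays faster than this lattice grows. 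This establishes convergence of the left-hand sum with the weight $(c_0 + \Lambda(\phi))^{-\ell}$ -- equivalently $(1+\Lambda(\phi))^{-\ell}$, as the two are comparable -- over \emph{all} $\K_0$-fixed $\phi$; discarding the $\phi$ with $\Lambda(\phi) = 0$ and the classes $\chi$ having no $\K_0$-fixed $\phi$ with $\Lambda(\phi)\neq 0$, as in the statement, only removes non-negative terms.

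It remains to replace $(1+\Lambda(\phi))^{-\ell}$ by $\Lambda(\phi)^{-\ell}$; the only issue is small $\Lambda(\phi)$, and here one invokes a spectral gap: there is $c_1 > 0$ with $\Lambda(\phi) \geq c_1$ for every $\K_0$-fixed $\phi$ with $\Lambda(\phi) \neq 0$. Indeed, the explicit formula for the Casimir eigenvalue of an induced representation $I_P(\pi_\zeta)$, together with the positivity of the relevant $\rho_P$-shifts (and trivial bounds towards Ramanujan for the archimedean components of the cuspidal supports), shows that the bottom of the continuous spectrum of $\Delta$ on $L^2(G(k)\backslash G(\Adele_k))$ is strictly positive, while the part of the discrete spectrum with $\Lambda(\phi) < 1$ and a nonzero $\K_0$-fixed vector is finite-dimensional (Gelfand--Piatetski-Shapiro), hence bounded below by some $c_1 > 0$. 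Consequently $\Lambda(\phi)^{-\ell} \ll_\ell (1+\Lambda(\phi))^{-\ell}$ uniformly over the relevant $\phi$ (immediate for $\Lambda(\phi) \geq 1$, and for $c_1 \leq \Lambda(\phi) < 1$ it follows from $\Lambda(\phi)^{-\ell} \leq c_1^{-\ell}$). Taking $\ell$ large enough for the preceding step to apply, we conclude
$$ \sum_{\chi, P} n(A_P)^{-1} \!\int_{\Pi(M_P)}\! \Big( \sum_\phi \Lambda(\phi)^{-\ell}|E(e,\phi)|^2 \Big)\mathrm{d}\pi \;\ll_\ell\; \sum_{\chi, P} n(A_P)^{-1} \!\int_{\Pi(M_P)}\! \Big( \sum_\phi (1+\Lambda(\phi))^{-\ell}|E(e,\phi)|^2 \Big)\mathrm{d}\pi \;<\; \infty, $$
which is the assertion.

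The main obstacle is the verification that $F$ is bounded: this is a genuine point, since the Poincaré series attached to the heat kernel $e^{-t\Delta}$ is \emph{not} bounded (the automorphic heat kernel grows in the cusps), which is why one must work with a Bessel--potential kernel whose exponential decay rate is forced, by enlarging $c_0$, to outpace the modular growth; the estimate is a routine application of reduction theory, but needs the precise decay of $(c_0+\Delta)^{-\ell}$ on $G_\infty$. The second, smaller, input is the spectral gap $\Lambda(\phi) \geq c_1 > 0$: positivity of the bottom of the continuous spectrum is elementary from the Casimir formula, whereas the positive lower bound on the non-trivial discrete spectrum at level $\K_0$ rests on known (Selberg-type) estimates. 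An alternative that sidesteps the boundedness question is to invoke Arthur's convergence estimates for the spectral side of the trace formula \cite{Arthur78} to bound $\sum_{\chi,P} n(A_P)^{-1}\int_{\Pi(M_P)} \sum_\phi (1+\Lambda(\phi))^{-N} \int_{\mathfrak S} |E(x,\phi)|^2\,\mathrm{d}x\,\mathrm{d}\pi$ over a Siegel domain $\mathfrak S$, then use elliptic regularity near the interior point $e$ to replace the local $L^2$-norm by $|E(e,\phi)|^2$ at the cost of a further power of $(1+\Lambda(\phi))$, and finally apply the same spectral gap.
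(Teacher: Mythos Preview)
The paper does not actually prove this proposition: it is quoted verbatim as Proposition~3.5 of \cite{JAMS}, with no argument supplied here. So there is no in-paper proof to compare against. What I can do is assess your argument on its own terms and against what the proof in \cite{JAMS} presumably does.

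Your primary strategy has a circularity problem. You invoke Theorem~\ref{lem:spectral-expansion-general} (which is Lemma~3.1 of \cite{JAMS}) to feed in a test function and read off the desired convergence from $F(e)=S(F,e)$. But the pointwise identity $F(x)=S(F,x)$ in that theorem is precisely the kind of statement whose proof \emph{requires} an a~priori convergence estimate for the spectral side --- and Proposition~3.5 is exactly that estimate. In \cite{JAMS} the logical order is the reverse of what you are proposing: one first establishes the convergence bound (via Arthur's multiplier estimates and M\"uller's trace-class theorem, independently of any spectral identity), and only then uses it to justify the pointwise spectral expansion. Your ``alternative'' in the final paragraph --- Arthur's spectral-side estimates over a Siegel domain, then elliptic regularity to pass from local $L^2$ to the point value at $e$, at the cost of extra powers of $(1+\Lambda(\phi))$ --- is in fact the non-circular route, and is much closer to how the result is actually obtained.

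Even bracketing the circularity, two of the technical steps are underspecified. First, the Bessel--potential kernel $g_{c_0,\ell}$ is only $C^m$ for $m$ depending on $\ell$ and $\dim G_\infty$; your parenthetical fix (mollify by a bump) destroys the exact eigenvalue action $(c_0+\Lambda(\phi))^{-\ell}$ and replaces it by something that also depends on the $\K_\infty$-type of $\phi$, so the bookkeeping is not as ``harmless'' as claimed. Second, the spectral-gap step is not as clean as you suggest: for $P\neq G$ the Casimir eigenvalue of $I_P(\pi_\zeta)$ as $\zeta$ ranges over $i\mathfrak a_P^*$ and $\pi$ over the relevant cuspidal data need not be uniformly bounded away from $0$ without further input, and the statement of the proposition deliberately excludes $\Lambda(\phi)=0$ rather than asserting a gap. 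None of this is fatal, but it means the sketch as written does not stand on its own.
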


\subsection{Bounds for matrix coefficients}\label{bounds:matrix}

One of the key ingredients of the proof of the main theorem of \cite{JAMS} was certain bounds for matrix coefficients of admissible representations of the group of rational points of a semi-simple group of adjoint type over a local field. If the local rank of the group is $1$, these bounds follow from estimates towards the Ramanujan conjecture for automorphic representations; whereas, for rank larger than $1$, the estimates are purely local, \cite{Oh}. The way these estimates make an appearance in \cite{JAMS} is via the connection of integrals of the form 
$$
\int_{\K}\int_\K \phi( \kappa g \kappa') \, \mathrm{d} \kappa \, \mathrm{d} \kappa', 
$$
for $\phi$ an automorphic form on the group $G$, to products of local spherical functions. For simplicity this connection was made explicit only for a special case in \cite[Corollary 4.1]{JAMS}. In the present work, however, we cannot make the simplifying assumption made in \cite{JAMS}. Here we explain the necessary modifications to the argument of \cite[\S 4.1]{JAMS}. 

\

Let $\mathcal H(G(\Adele_k))$ be the global Hecke algebra of the group $G(\Adele_k)$, and let $\xi \in \mathcal H(G(\Adele_k))$ be a non-trivial idempotent. (For basic material on Hecke algebras see \cite[Ch. III]{Jacquet-Langlands} or \cite{Flath}). We assume that $\xi= \otimes_v' \xi_v$ where for each $v$ the local idempotent $\xi_v$ is chosen as follows: 
\begin{itemize} 
\item For $v$ non-archimedean, $\xi_v = \vol(\K_v)^{-1} ch_{\K_v}$ with $\K_v$ the compact open subgroup of Lemma \ref{lem:K_v}, where here $ch_{\K_v}$ is the characteristic function of the set $\K_v$.  
\item For each archimedean place $v$, let $W_v$ be a finite dimensional smooth representation of $\K_v$, and let $\tr_{W_v}$ be its character extended as a function on $G(k_v)$ by defining it to be zero outside $\K_v$. We let $\xi_v = (\dim W_v) \cdot \tr_{W_v}$.  Let $W = \otimes_{v \text{ arch.}} W_v$.  
\end{itemize}
Let $\phi \in \mathcal{V}^0_P(\pi)$ be a vector of norm $1$, and suppose that $\phi * \xi = \phi$, with $*$ the convolution action of $\mathcal H(G(\Adele_k))$ on $\mathcal V^0_P(\pi)$.   

\

We wish to find a bound for 
$$
M_\xi(g, \phi) : = (\xi* E)(g, \phi).
$$
 We will explain the details for the case where the semi-simple rank of $G$ is larger than or equal to $2$ over every localization of the ground field. The extension to the general case works along the lines of \cite{JAMS}, especially Theorem 4.5.

We need a piece of notation from \cite{Oh}. Suppose $H$ is a reductive group defined over a local field $F$, $A$ a maximal $F$-split torus in $H$, and $K$ a good maximal compact subgroup such that we have the Cartan decomposition $H(F) = K A^+ \Omega K$, with $A^+$ a closed positive Weyl chamber and $\Omega$ a finite subset of $H(F)$. Let $\Phi$ be the set of 
non-multipliable roots of $A$ and $\Phi^+$ the set of positive roots. We call a set $\mathcal S \subset \Phi^+$ a {\em strongly orthogonal system} if for every two distinct $\alpha, \beta \in \mathcal S$, neither $\alpha \pm \beta$ is an element of $\Phi$.  Let $\Xi_{\PGL_2(F)}$ be the Harish-Chandra function of $\PGL_2(F)$ defined as follows:
\begin{itemize}
\item If $F = \RR$ and $x \geq 1$, 
$$
\Xi_{\PGL_2(\RR)}(x) = \frac{2}{\pi \sqrt{x}} \int_0^{\pi/2} \left( \frac{\cos^2 t}{x^2} + \sin^2 t\right)^{-1/2} \, dt. 
$$
For other values of $x$,  $\Xi_{\PGL_2(\RR)} ( x) = \Xi_{\PGL_2(\RR)}(\max\{|x|, |x|^{-1}\})$; 
\item If $F = \CC$ and $x \geq 1$, 
$$
\Xi_{\PGL_2(\CC)}(x) = \frac{2}{\pi x} \int_0^{\pi/2} \left( \frac{\cos^2 t}{x^2} + \sin^2 t\right)^{-1/2} \sin(2t) \, dt. 
$$
Again, for an arbitrary $x$, $\Xi_{\PGL_2(\CC)} ( x) = \Xi_{\PGL_2(\CC)}(\max\{|x|, |x|^{-1}\})$; 
\item If $F$ is non-archimedean with uniformizer $\varpi$ and $|\varpi| = q^{-1}$ then for $n \in \NN$, 
$$
\Xi_{\PGL_2(F)}(\varpi^n) = \frac{1}{ q^{n/2}} \left(\frac{n (q-1) + (q+1)}{q+1} \right). 
$$
Furthermore, $\Xi_{\PGL_2(F)} (x^{-1}) = \Xi_{\PGL_2(F)} (x)$ for each $x$, and for every unit $\epsilon$,  $\Xi_{\PGL_2(F)} (\epsilon x) = \Xi_{\PGL_2(F)} (x)$.
\end{itemize}
By \cite[\S 2.2]{Oh}, $\Xi_{\PGL_2(F)}$ takes values in $(0, 1]$, and for any $\varepsilon > 0$, there are constants $c_1, c_2(\varepsilon)$ such that for all $x \in F^\times$, 
\begin{equation}\label{Xi}
c_1 \max \{|x|, |x|^{-1} \}^{-1/2} \leq \Xi_{\PGL_2(F)}(x)  \leq c_2(\varepsilon) \max\{|x|, |x|^{-1} \}^{-1/2+ \varepsilon}. 
\end{equation}

 For $a \in A^+$, $k_1, k_2 \in K$, and $\omega \in \Omega$, we set 
$$
\xi_S(k_1 a \omega k_2) = \prod_{\alpha \in \mathcal S} \Xi_{\PGL_2(F)} (\alpha(a)). 
$$

\begin{lemma}\label{lem:4.6} Let the notations be as above. For each place $v$, pick a strongly orthogonal system in $G(k_v)$.  There is a constant $C_\xi$, depending only on the idempotent $\xi$, such that 
$$
|M_\xi( g, \phi)| \leq C_\xi  \sqrt{\dim \mathcal V_P(\pi)_{\chi, \K_0, W}} 
\max_{\phi \in \mathcal B_P(\pi)_\chi \cap \mathcal V_P(\pi)_{\chi, \K_0, W}}
 \{ |E(e, \phi)|\} \cdot \prod_v \xi_{{\mathcal S}_v} (g_v). 
$$
\end{lemma}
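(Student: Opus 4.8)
The plan is to adapt the argument of \cite[\S4.1]{JAMS}, replacing the simplifying hypothesis made there (namely the special choice of spherical idempotent) with the general idempotent $\xi = \otimes_v' \xi_v$ introduced above. The starting point is the definition $M_\xi(g,\phi) = (\xi * E)(g,\phi)$ together with the factorisation $\xi = \otimes_v' \xi_v$, which lets us write $M_\xi(g,\phi)$ as an iterated convolution, place by place, of the Eisenstein series against the local idempotents $\xi_v$. First I would expand $E(g,\phi)$ in the orthonormal basis $\mathcal B_P(\pi)_\chi$ and use the fact (from point 7 of the review of automorphic forms) that each basis vector lies in some finite-dimensional space $\mathcal V_P(\pi)_{\chi,\K_0,W}$; since $\phi * \xi = \phi$, only the basis vectors in $\mathcal V_P(\pi)_{\chi,\K_0,W}$ for the \emph{fixed} representation $W = \otimes_v W_v$ contribute. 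This is what produces the factor $\sqrt{\dim \mathcal V_P(\pi)_{\chi,\K_0,W}}$ (from Cauchy--Schwarz over that finite orthonormal set) and the factor $\max\{|E(e,\phi)|\}$ (bounding the Eisenstein-series value by its value at the identity after accounting for the $\K$-type).

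The heart of the matter is the product $\prod_v \xi_{\mathcal S_v}(g_v)$. Here I would invoke the connection, as in \cite[Cor.~4.1]{JAMS}, between the bi-$\K$-averaged matrix coefficient $\int_\K\int_\K \phi(\kappa g \kappa')\,d\kappa\,d\kappa'$ and products of local spherical functions, but now without assuming the spherical vector: the convolution by $\xi_v$ at a nonarchimedean $v$ amounts to averaging over $\K_v$ (since $\xi_v = \vol(\K_v)^{-1}\mathrm{ch}_{\K_v}$), while at an archimedean $v$ the convolution by $\xi_v = (\dim W_v)\tr_{W_v}$ projects onto the $W_v$-isotypic part and is controlled by a $\K_v$-finite matrix-coefficient bound. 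The matrix-coefficient estimates of Oh \cite{Oh} — valid because we are assuming semi-simple rank $\geq 2$ at every place, so that the bounds are purely local — give, for each place $v$, the bound of the automorphic form's local component by $\xi_{\mathcal S_v}(g_v) = \prod_{\alpha \in \mathcal S_v}\Xi_{\PGL_2(k_v)}(\alpha(a_v))$, using the Cartan decomposition $g_v = k_1 a_v \omega k_2$ and the strongly orthogonal system $\mathcal S_v$. The constant $C_\xi$ absorbs the finitely many archimedean normalising factors $\dim W_v$ and the contribution of the finitely many ``bad'' nonarchimedean places where $\K_v \neq G(\OO_v)$; at the remaining places $\xi_v$ is the standard idempotent and the estimate is exactly the one already used in \cite{JAMS}.

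The main obstacle, I expect, is the bookkeeping at the archimedean places: one must check that convolution against $(\dim W_v)\tr_{W_v}$ really does yield a matrix-coefficient bound of the stated shape uniformly in $\pi$, i.e.\ that the passage from ``$\K_v$-bi-invariant'' to ``$W_v$-isotypic'' costs only the constant $\dim W_v$ and not anything depending on $\pi$. This is where one genuinely departs from \cite{JAMS}, and it requires the $\K$-finite (as opposed to spherical) matrix-coefficient bounds; the rank $\geq 2$ hypothesis is what makes these available locally and unconditionally. Once that is in place, the remaining steps — Cauchy--Schwarz over the finite orthonormal set, the bound $|E(g,\phi)| \ll \max|E(e,\phi)|\cdot\prod_v\xi_{\mathcal S_v}(g_v)$, and collecting constants — are routine, following \cite[\S4.1]{JAMS} and \cite[Thm.~4.5]{JAMS} for the reduction of the general-rank case to the rank-one estimates near the remaining places.
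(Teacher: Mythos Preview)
Your proposal has the right ingredients (Oh's $\K$-finite matrix-coefficient bounds, factorisation over places, the finite-dimensionality of $\mathcal V_P(\pi)_{\chi,\K_0,W}$), but the assembly is off in a way that matters. The paper does \emph{not} begin by expanding $E(g,\phi)$ or $\phi$ in a basis. Instead it introduces the linear functional
\[
\lambda_\xi(\psi) = (\xi * E)(e,\psi)
\]
on $\mathcal V_P^0(\pi)\simeq\bigotimes_v' I_P(\pi_v)$, so that (for pure tensors) $M_\xi(g,\phi)=\prod_v\langle\pi_v(g_v)\phi_v,\lambda_{v,\xi}\rangle_v$ is itself a single global matrix coefficient. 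Oh's theorem then applies once, to each local factor, giving
\[
|M_\xi(g,\phi)|\le C_\xi'\,(\dim\K\phi)^{1/2}\,\|\lambda_\xi\|\,\|\phi\|\,\prod_v\xi_{\mathcal S_v}(g_v).
\]
The factor $\sqrt{\dim\mathcal V_P(\pi)_{\chi,\K_0,W}}\cdot\max|E(e,\cdot)|$ arises \emph{afterwards}, entirely from bounding the operator norm $\|\lambda_\xi\|$: since $\lambda_\xi(\psi)=E(e,\xi*\psi)$ and $\xi*$ is the projection onto the finite-dimensional space $\mathcal V_P(\pi)_{\chi,\K_0,W}$, the elementary estimate $\|\lambda\|\le\sqrt{\dim V}\cdot\max_{v\in\mathcal B}|\lambda(v)|$ gives the claimed bound.

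The gap in your plan is the step ``bound $|E(g,\phi)|\ll\max|E(e,\phi)|\cdot\prod_v\xi_{\mathcal S_v}(g_v)$''. The raw Eisenstein series $E(g,\phi)$ is an automorphic form, not a matrix coefficient of the local induced representations, and no such pointwise bound is available; it is precisely the convolution by $\xi$ --- equivalently, the pairing against the vector $\lambda_\xi$ in the contragredient --- that turns $M_\xi(g,\phi)$ into a bona fide matrix coefficient to which Oh applies. Consequently the Cauchy--Schwarz/$\max$ step cannot come first (from a basis expansion of $E$) but must come last, from estimating $\|\lambda_\xi\|$. Once you reorganise in this order, the archimedean bookkeeping you worried about is absorbed automatically: the $\K_v$-type dimensions enter through Oh's bound as $(\dim\K_v\phi_v)^{1/2}(\dim\K_v\lambda_{v,\xi})^{1/2}$, which is $1$ at almost all places and depends only on $\xi$ at the remaining ones.
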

\begin{proof}
We set 
$$
\lambda_\xi(\phi) = (\xi* E)(e, \phi).  
$$
Then $\lambda_\xi$ is a smooth functional on the space $\mathcal{V}^0_P(\pi)$. Since $\mathcal{V}^0_P(\pi) \simeq \otimes_v' I_P(\pi_{v})$, we have
$$
\lambda_\xi \in \widetilde{\mathcal{V}^0_P(\pi)} := \otimes_v' \widetilde{I_P(\pi_{v})}, 
$$
with $\widetilde{I_P(\pi_{v})}$ the contragradient of the local representation $I_P(\pi_{v})$. 

Without loss of generality we may assume that $\lambda_\xi = \otimes_v \lambda_{v, \xi}$ where for almost all $v$, $\lambda_{v, \xi}$ is the $\K_v$-fixed vector in $\widetilde{I_P(\pi_{v})}$. Next, if via the identification $\mathcal{V}^0_P(\pi) \simeq \otimes_v' I_P(\pi_{v})$, $\phi$ is a pure tensor $\otimes_v \phi_{v}$, then, if $g = (g_v)_v$, we have  
$$
M_\xi(g, \phi) = \prod_v \langle \pi_{v}(g_v) \phi_{v}, \lambda_{v, \xi}\rangle_v 
$$
with $\langle ., . \rangle_v$ the pairing $I_P(\pi_{v}) \times \widetilde{I_P(\pi_{v})} \to \CC$.  Now we invoke the main theorem of \cite{Oh}, or \cite[\S 4.2]{JAMS} for a summary, to obtain the bound 
$$
\langle \pi_{v}(g_v) \phi_{v}, \lambda_{v, \xi}\rangle_v \leq (\dim \K_v \phi_{v})^{1/2} (\dim \K_v \lambda_{x, v})^{1/2} || \phi_{v} || \cdot ||\lambda_{x, v} || \cdot \xi_{\mathcal S} (g_v) 
$$
for any strongly orthogonal set $\mathcal S$. We note that for almost all $v$, $$(\dim \K_v \phi_{v})^{1/2} (\dim \K_v \lambda_{x, v})^{1/2}=1,$$ so we may safely multiply all of these inequalities to obtain 
\begin{equation}\label{M-bound}
|M_\xi(g, \phi)| \leq C'_\xi (\dim \K\phi)^{1/2} ||\lambda_\xi||\cdot ||\phi|| \cdot \prod_v \xi_{\mathcal S} (g_v), 
\end{equation}
with $C'_\xi$ a constant depending only on $\xi$. 
\

Next we estimate $||\lambda_\xi||$. The important point to keep in mind is that $||\lambda_\xi||$ depends on the space on which $\lambda_\xi$ is acting. Fix $\chi \in \mathfrak X$ and $\pi$ and consider the restriction of $\lambda_\xi$ to $\mathcal V_P(\pi)_\chi$. We  have $\lambda_\xi(\phi) = \xi * E(e, \phi) = E(e, \xi * \phi)$. Since convolution with $\xi$ is a projection $\mathcal V_P(\pi)_\chi \to \mathcal V_P(\pi)_{\chi, \K_0, W}$, we conclude that 
\begin{equation}\label{lambda-bound}
||\lambda_\xi || \leq 
\sqrt{\dim \mathcal V_P(\pi)_{\chi, \K_0, W}} 
\max_{\phi \in \mathcal B_P(\pi)_\chi \cap \mathcal V_P(\pi)_{\chi, \K_0, W}}
 \{ |E(e, \phi)|\}. 
\end{equation}

Here we have used the following statement: If $V$ is a finite dimensional complex Hilbert space and $\lambda:V \to \CC$ is a linear functional, then for any orthonormal basis $\mathcal B$ of $V$ the linear map norm of $\lambda$ is bounded by $\sqrt{\dim V} \max_{v \in \mathcal B} \{ |\lambda(v)|\}$.  For convenience, we include a proof. Let $w \in V$. Then $w = \sum_{c \in \mathcal B} c_v v$ for complex numbers $c_v$. Then 
$$
|\lambda(w)| = |\sum_v c_v \lambda(v)| \leq \sum_v |c_v| \cdot |\lambda(v)| \leq \max_{v \in \mathcal B} \{ |\lambda(v)|\} \sum_v |c_v|. 
$$
By the Cauchy--Schwarz inequality we get 
$$
\sum_v |c_v| \leq \sqrt{\dim V} \cdot \sqrt{ \sum_v |c_v|^2} =\sqrt{\dim V} \cdot ||w||. 
$$
Consequently, we have showed
$$
|\lambda(w)| \leq \sqrt{\dim V} \cdot  \max_{v \in \mathcal B} \{ |\lambda(v)|\} \cdot ||w||. 
$$
This proves the assertion. 
\end{proof}

\subsection{The spectral decomposition for the height zeta function}
We now return to the proof of Theorem \ref{thm:semi-simple} by applying the above spectral analysis to the height zeta function.
For $g \in G(\Adele_k)$ we set 
\begin{equation} \label{def:HZF}
Z_\br(\sbf, g) = \sum_{ \gamma \in G(k)} \thorn(\gamma g) H(\sbf, \gamma g)^{-1}, \quad Z_\br(\sbf):=Z_\br(\sbf, e),
\end{equation}
where $e$ the identity element of $G$.
This is absolutely convergent for $\sbf \in \mathcal{T}_{C}$ for some sufficiently large $C$ (see \eqref{def:T_c}). For any fixed $\sbf$ in the domain of absolute convergence $Z_\br(\sbf, g)$ defines a continuous function which is bounded on $G(\Adele_k)$ and square integrable on $G(\Adele_k)$ (this can be shown using a minor variant of the proof of \cite[Prop.~2.3]{STBT_PGL2}, using the fact that $|\thorn| \leq 1$). As the height function is right invariant under $\K_0$, we therefore have
$$Z_\br(\sbf, g) \in L^2(G(k)\backslash G(\Adele_k))^{\K_0}.$$
\begin{lemma}
There exists $C > 0$ such that for $\sbf \in \mathcal{T}_{C}$ the function $Z_\br(\sbf, \cdot)$ satisfies the conditions of Theorem~\ref{lem:spectral-expansion-general}, and as such has a spectral decomposition.  
\end{lemma}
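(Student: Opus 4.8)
To establish the lemma we verify that the function
$$f(g) := \thorn(g)\, H(\sbf, g)^{-1},$$
for which $Z_\br(\sbf,\cdot) = \sum_{\gamma\in G(k)} f(\gamma g)$, satisfies the hypotheses of Theorem~\ref{lem:spectral-expansion-general} when $\sbf\in\mathcal{T}_C$ with $C$ large enough. The plan is to dispatch the regularity hypotheses first and then to check the two convergence conditions \eqref{integral1-spectral} and \eqref{integral2-spectral}.

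\emph{Regularity.} By Lemma~\ref{lem:K_v} both $H(\sbf,\cdot)$ and $\thorn$ are right invariant under the compact open subgroup $\K_0 = \prod_{v\nmid\infty}\K_v$ of $G(\Adele_f)$. At each archimedean place $\thorn$ is locally constant by Lemma~\ref{lem:K-invariant}(1), while $H(\sbf,\cdot)$ is smooth because the adelic metrics chosen on the $\OO_X(D_\alpha)$ in \S\ref{sec:heights} are smooth; hence $f$ is smooth on $G(\Adele_k)$. It has already been observed above that, for $\sbf\in\mathcal{T}_C$ with $C$ sufficiently large, the associated series $Z_\br(\sbf,\cdot)$ converges for every $g$ and defines a continuous, bounded, $\K_0$-invariant function on $G(k)\backslash G(\Adele_k)$; its smoothness follows from the locally uniform convergence of the series together with its derivatives, by the usual comparison with the anticanonical height as in \cite{JAMS}.

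\emph{The convergence conditions.} Write $H = H(\underline{\kappa},\cdot)$ for the anticanonical height and let $u,v$ be the natural numbers supplied by Theorem~\ref{lem:spectral-expansion-general}. Since $0\le\thorn\le 1$ and the complexified height pairing is multiplicative in its first argument,
$$|f(g)|\,H(g)^u \;\le\; H(\sbf,g)^{-1}H(\underline\kappa,g)^{u} \;=\; H(\sbf - u\underline{\kappa},g)^{-1},$$
so \eqref{integral1-spectral} is dominated by the adelic height integral $\int_{G(\Adele_k)} H(\sbf - u\underline{\kappa},g)^{-1}\,\mathrm{d}g$. Unfolding this integral over a finite-volume fundamental domain for $G(k)\backslash G(\Adele_k)$ turns it into the integral of the plain height zeta function $\sum_{\gamma\in G(k)} H(\sbf - u\underline{\kappa},\gamma g)^{-1}$, which is bounded on $G(\Adele_k)$ whenever $\sbf - u\underline{\kappa}$ lies in the region of absolute convergence; hence \eqref{integral1-spectral} converges for $\sbf\in\mathcal{T}_C$ with $C$ depending only on $u$ and the $\kappa_\alpha$ (see also \cite{JAMS} for the convergence of this height integral). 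For \eqref{integral2-spectral}, the local constancy of $\thorn$ at the archimedean places implies that near any point $f$ coincides with a constant multiple of $H(\sbf,\cdot)^{-1}$, so $\Delta^v f = \thorn\cdot\Delta^v\big(H(\sbf,\cdot)^{-1}\big)$ pointwise. The estimates on the smooth height and its derivatives proved in \cite[\S6]{JAMS} give $\big|\Delta^v\big(H(\sbf,\cdot)^{-1}\big)(g)\big| \le P(\sbf)\,H(\sbf',g)^{-1}$ for a polynomial $P$ in $\sbf$ and a shifted vector $\sbf'$ with $\re s'_\alpha = \re s_\alpha - \varepsilon$, the small loss $\varepsilon>0$ coming from the logarithmic derivatives of the metrics along the boundary divisors; combining this with $\thorn\le 1$ reduces \eqref{integral2-spectral} to a convergent height integral of the same type, at the cost of enlarging $C$ once more.

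With both conditions in hand, Theorem~\ref{lem:spectral-expansion-general} gives $Z_\br(\sbf,x) = S(Z_\br(\sbf,\cdot),x)$ for every $x\in G(k)\backslash G(\Adele_k)$, which is the asserted spectral decomposition. The only point that is genuinely new relative to the $\br=0$ case of \cite{JAMS} is the presence of the indicator $\thorn$ in \eqref{integral2-spectral}, and the main obstacle is precisely to see that it does not spoil the archimedean differentiability estimates; this is resolved by the two properties of $\thorn$ established in Lemma~\ref{lem:K-invariant} — that it is locally constant, so $\Delta$ never differentiates it, and that it is bounded by $1$, so it may simply be discarded in all the majorisations.
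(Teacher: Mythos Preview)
Your argument is correct and follows essentially the same route as the paper, which simply observes that since $|\thorn|\le 1$ the result is a consequence of \cite[Prop.~8.2]{JAMS}. You have in effect unpacked that citation: the bound $|\thorn|\le 1$ reduces \eqref{integral1-spectral} to the height integral treated there, and the local constancy of $\thorn$ at the archimedean places (Lemma~\ref{lem:K-invariant}(1)) ensures $\Delta^v f = \thorn\cdot \Delta^v\big(H(\sbf,\cdot)^{-1}\big)$, so \eqref{integral2-spectral} likewise reduces to the JAMS estimate. The paper's one-line proof leaves this second point implicit, so your explicit identification of it is a genuine clarification rather than a different approach.
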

\begin{proof}
Since $|\thorn| \leq 1$, this is a consequence of \cite[Prop. 8.2]{JAMS}.  
\end{proof}
We therefore obtain the spectral expansion
\begin{equation} \label{eqn:spectral_expansion}
Z_\br(\sbf) = \sum_{\chi \in (G(\Adele_k)/G(k))^\wedge} \int_{G(\Adele_k)} H(\sbf,g)^{-1} \chi(g) \thorn(g) \mathrm{d}g + S^\flat(\sbf).
\end{equation}
Here, as in \cite[\S 8.1]{JAMS}, we denote by $S^\flat(\sbf)$ the contribution of the non-one dimensional representations to the spectral decomposition after putting $g = e$.

\subsection{Continuous and cuspidal spectrum}
We now handle the contribution $S^\flat$ from the continuous and cuspidal spectrum.
The analysis here is very similar to that in \cite{JAMS}, so we will be brief.

Let $\xi$ be an idempotent chosen as in \S \ref{bounds:matrix} in the global Hecke algebra such that $\xi * (H\cdot \thorn) = (H \cdot \thorn) * \xi = H$. With the notations of \S\ref{bounds:matrix} set $W = \otimes_{v \text{ arch.}} W_v$, $\xi_0 = \otimes'_{v \text{ non-arch.}} \xi_v$, and $\xi_W = \otimes_{v \text{ arch.}} \xi_v$. Let $P$ be a standard parabolic subgroup. For $\pi \in \Pi(M_P)$, $\chi \in \mathfrak X$, and 
$\phi \in \mathcal B(\pi)_\chi$, with $\phi * \xi = \phi$ we consider the integral 
$$\widehat{H}_\xi(\br,\sbf,E(\phi)) = \int_{G(\Adele_k)}\xi *(H(\sbf,\cdot)^{-1} \thorn(\cdot))(g) E(g, \phi)  \mathrm{d}g, \quad \sbf \in \mathcal{T}_{C},$$ 
for some large $C> 0$.
It is easy to see that for $\sbf \in \mathcal{T}_{C}$,  
$$
\widehat{H}_\xi(\br,\sbf,E(\phi)) = \int_{G(\Adele_k)}H(\sbf,g)^{-1} \thorn(g) (\xi * E)(g, \phi)  \mathrm{d}g
$$
Once we use Lemma \ref{lem:4.6}, the same argument as in the proof of \cite[Cor. 7.4]{JAMS}, which uses the bounds from \eqref{Xi}, gives the following theorem. 

\begin{theorem}\label{coro-7.4}
The function $\widehat{H}_\xi(\br,\sbf,E(\phi))$ has an analytic continuation to a function which is holomorphic on $\mathcal{T}_{-c}$, with $c>0$ as in \cite[Thm.~4.5]{JAMS}. For each integer $r> 0$, all $\varepsilon >0$, and every compact subset $U \subset \mathcal{T}_{-c + \varepsilon}$, there exists a constant $C = C(\varepsilon,r, U, \xi_0)$, independent of $\phi$, such that 
$$
|\widehat{H}_\xi(\br,\sbf,E(\phi))| \leq C \Lambda(\phi)^{-r} |E(e, \phi)|\sqrt{\dim \mathcal V_P(\pi)_{\chi, \K_0, W}} 
\max_{\varphi \in \mathcal B_P(\pi)_\chi \cap \mathcal V_P(\pi)_{\chi, \K_0, W}} \hspace{-25pt} \{ |E(e, \varphi)|\}. 
$$
for all $\sbf \in U$. 
\end{theorem}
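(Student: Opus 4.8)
The plan is to follow the proof of \cite[Cor.~7.4]{JAMS} almost verbatim, the only genuinely new input being that Lemma~\ref{lem:4.6} supplies the required matrix-coefficient bound for an arbitrary adjoint $G$ in place of the more restrictive \cite[Cor.~4.1]{JAMS}. First I would start from the identity recorded just above the statement,
\[
\widehat{H}_\xi(\br,\sbf,E(\phi)) = \int_{G(\Adele_k)} H(\sbf,g)^{-1}\,\thorn(g)\, M_\xi(g,\phi)\,\mathrm{d}g, \qquad \sbf\in\mathcal{T}_{C},
\]
and substitute the pointwise bound of Lemma~\ref{lem:4.6} for $M_\xi(g,\phi)$. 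Since $0\le\thorn\le 1$, this pulls every $\phi$-dependent quantity out of the integrand, in precisely the shape appearing on the right of the statement, and reduces matters to the holomorphy, locally uniformly, of the single adelic integral
\[
I(\sbf) = \int_{G(\Adele_k)} |H(\sbf,g)|^{-1}\,\prod_v \xi_{\mathcal{S}_v}(g_v)\,\mathrm{d}g
\]
on the tube $\mathcal{T}_{-c}$, where $c>0$ is the constant of \cite[Thm.~4.5]{JAMS}. Since $I(\sbf)$ involves neither $\phi$ nor $\br$, any bound obtained for it is automatically uniform in $\phi$.

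Next I would estimate $I(\sbf)$ as an Euler product $\prod_v \int_{G(k_v)} |H_v(\sbf,g_v)|^{-1}\xi_{\mathcal{S}_v}(g_v)\,\mathrm{d}g_v$. For all but finitely many $v$ one has $\xi_{\mathcal{S}_v}\equiv 1$ on $\K_v=G(\OO_v)$ and $H_v$ is the unramified local height, so the product of these factors is controlled on $\mathcal{T}_{-c}$ by the Euler product analysis of \cite[\S7]{JAMS}. At each of the finitely many remaining places I would apply the Cartan decomposition $G(k_v)=\K_v A^+\Omega\K_v$ together with the dictionary of Lemma~\ref{lem:spherical} matching the boundary divisors $D_\alpha$ with the simple roots: this turns the local integral into an integral over the chamber $A^+$ of a product of unramified characters — with exponents governed by $\re s_\alpha$ and by the $\kappa_\alpha$ of \eqref{def:kappa} — against the Jacobian $\delta_B$ and $\prod_{\alpha\in\mathcal{S}_v}\Xi_{\PGL_2(k_v)}(\alpha(a))$. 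Feeding in the upper bound from \eqref{Xi} with $\varepsilon$ small shows that this converges and is holomorphic precisely for $\re s_\alpha>\kappa_\alpha+1-c$, i.e.~on $\mathcal{T}_{-c}$, with a bound locally uniform there.

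To upgrade the estimate to the decay $\Lambda(\phi)^{-r}$ for every $r>0$, I would use that the operator $\Delta$ is elliptic and self-adjoint on $L^2(G(k)\backslash G(\Adele_k))$ and that $E(\cdot,\phi)$ is an eigenfunction, $\Delta E(\cdot,\phi)=\Lambda(\phi)E(\cdot,\phi)$ with $\Lambda(\phi)>0$ on the occurring classes (Proposition~\ref{important-proposition}). Thus $\widehat{H}_\xi(\br,\sbf,E(\phi))=\Lambda(\phi)^{-r}\langle \Delta^r(\xi*(H(\sbf,\cdot)^{-1}\thorn)),\,E(\cdot,\phi)\rangle$, and moving $\Delta^r$ onto the idempotent via $\Delta^r(\xi*(\cdot))=(\Delta^r\xi)*(\cdot)$ puts us back in the situation of the first two paragraphs with $\xi$ replaced by the $\phi$-independent operator $\Delta^r\xi$; alternatively one differentiates $H(\sbf,\cdot)^{-1}$ directly, which is licit because the adelic metric is smooth and only multiplies the integrand by factors bounded locally uniformly in $\sbf$. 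Re-running the first two steps then yields the stated bound with all constants independent of $\phi$.

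The one step that cannot simply be quoted from \cite{JAMS} is the first: the proof of \cite[Cor.~4.1]{JAMS} was carried out under a simplifying hypothesis on the local ranks that is unavailable here, and I expect removing it — i.e.~establishing and then applying Lemma~\ref{lem:4.6} — to be the main obstacle. Once Lemma~\ref{lem:4.6} is in hand, the local integral estimates of \cite[\S7]{JAMS} and the resulting region of holomorphy $\mathcal{T}_{-c}$ (with $c$ as in \cite[Thm.~4.5]{JAMS}, encoding Oh's purely local bounds in rank $\ge 2$ and the known approximations towards Ramanujan at the rank-one places) apply unchanged, and the remainder is the bookkeeping of \cite[Cor.~7.4]{JAMS}.
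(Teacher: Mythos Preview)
Your proposal is correct and takes essentially the same approach as the paper: the paper's proof is literally the one-line remark that, once Lemma~\ref{lem:4.6} replaces \cite[Cor.~4.1]{JAMS}, the argument of \cite[Cor.~7.4]{JAMS} (using the bounds \eqref{Xi}) goes through verbatim. Your write-up simply unpacks that argument---bounding $M_\xi(g,\phi)$ by Lemma~\ref{lem:4.6}, reducing to the $\phi$-independent Euler product of local integrals against $\xi_{\mathcal S_v}$, and extracting the $\Lambda(\phi)^{-r}$ via repeated application of $\Delta$---which is precisely the content of \cite[Cor.~7.4]{JAMS}.
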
 
The following theorem is the analogue of  \cite[Thm.~8.3]{JAMS} in our context: 

\begin{theorem} \label{thm:continous_cuspidal_spectrum}
The function $S^\flat(\sbf)$ has an analytic continuation to a holomorphic function on $\mathcal{T}_{-c}$. 
\end{theorem}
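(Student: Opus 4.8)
The plan is to deduce this from the estimate of Theorem~\ref{coro-7.4} together with the convergence statement of Proposition~\ref{important-proposition}, in exactly the way \cite[Thm.~8.3]{JAMS} is deduced from its analogues \cite[Cor.~7.4]{JAMS} and \cite[Prop.~3.5]{JAMS}; the only new feature is the presence of the indicator function $\thorn$, and this has already been absorbed into both ingredients. First I would make the expression for $S^\flat(\sbf)$ explicit. By definition, $S^\flat(\sbf)$ is the contribution to the spectral expansion \eqref{spectral-identity-general} of $F = Z_\br(\sbf,\cdot)$, evaluated at $x = e$, coming from all pairs $(P,\pi)$ other than the one-dimensional automorphic characters of $G$ (those being collected in the explicit sum of \eqref{eqn:spectral_expansion}). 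Unfolding the inner period integral $\int_{G(k)\backslash G(\Adele_k)}\overline{E(y,\phi)}\,Z_\br(\sbf,y)\,\mathrm{d}y$ to an integral over $G(\Adele_k)$ against $H(\sbf,\cdot)^{-1}\thorn$, and inserting the idempotent $\xi$ of \S\ref{bounds:matrix} (so that only the $\xi$-fixed $\phi$ contribute, and the period integral becomes $\widehat{H}_\xi(\br,\sbf,E(\phi))$, up to a complex conjugation irrelevant for all estimates), one is left with an expression of the shape
\[
	S^\flat(\sbf) \;=\; \sum_{\chi}\;\sum_{P} n(A_P)^{-1}\int_{\Pi(M_P)}\Bigg(\sum_{\substack{\phi \in \mathcal B_P(\pi)_\chi \\ \phi * \xi = \phi}}\widehat{H}_\xi(\br,\sbf,E(\phi))\;\overline{E(e,\phi)}\Bigg)\,\mathrm{d}\pi,
\]
where the outermost sum runs over those $\chi \in \mathfrak X$ admitting a $\K_0$-fixed $\phi$ with $\Lambda(\phi) \neq 0$. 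It remains to prove that this sum--integral converges absolutely and locally uniformly on $\mathcal{T}_{-c}$.

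Next I would fix $\varepsilon > 0$ and a compact set $U \subset \mathcal{T}_{-c+\varepsilon}$, and apply Theorem~\ref{coro-7.4} with a parameter $r > 0$ still to be chosen: for $\sbf \in U$, the modulus of the inner $\phi$-sum is bounded, with a constant $C = C(\varepsilon,r,U,\xi_0)$ \emph{independent of $\phi$} and of $\chi,P,\pi$, by a constant multiple of
\[
	\sqrt{\dim \mathcal V_P(\pi)_{\chi,\K_0,W}}\;\Big(\max_{\varphi \in \mathcal B_P(\pi)_\chi \cap \mathcal V_P(\pi)_{\chi,\K_0,W}}|E(e,\varphi)|\Big)\;\sum_{\phi}\Lambda(\phi)^{-r}\,|E(e,\phi)|^2 .
\]
As in \cite{JAMS}, the factors $\sqrt{\dim \mathcal V_P(\pi)_{\chi,\K_0,W}}$ and $\max_\varphi |E(e,\varphi)|$ are controlled in terms of $\Lambda$ well enough to be absorbed into $\Lambda(\phi)^{-r}$ upon replacing $r$ by a sufficiently large value (depending only on $G$ and $\xi$); this reduces the claimed convergence to that of
\[
	\sum_{\chi}\;\sum_{P} n(A_P)^{-1}\int_{\Pi(M_P)}\Bigg(\sum_{\phi \in \mathcal B_P(\pi)_\chi}\Lambda(\phi)^{-\ell}\,|E(e,\phi)|^2\Bigg)\,\mathrm{d}\pi
\]
for suitable $\ell>0$, which is exactly the content of Proposition~\ref{important-proposition}. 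Since $\varepsilon > 0$ and $U$ were arbitrary, $S^\flat$ converges absolutely and uniformly on compact subsets of $\mathcal{T}_{-c}$; and as each term $\widehat{H}_\xi(\br,\sbf,E(\phi))$ is holomorphic on $\mathcal{T}_{-c}$ by Theorem~\ref{coro-7.4}, the function $S^\flat(\sbf)$ is holomorphic on $\mathcal{T}_{-c}$, as claimed.

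The main obstacle is the uniformity and bookkeeping needed to dominate this infinite spectral sum: on the continuous part it ranges over a continuum $i\mathfrak a_P^*$ inside each $\Pi(M_P)$, over cuspidal data, and over the orthonormal basis $\mathcal B_P(\pi)_\chi$, and one must combine polynomial growth bounds (in the Laplacian eigenvalue $\Lambda$) for the dimensions and the Eisenstein values with the decay $\Lambda(\phi)^{-r}$ from Theorem~\ref{coro-7.4}. The decisive point making this work is that in Theorem~\ref{coro-7.4} the exponent $r$ may be taken arbitrarily large while the implied constant stays independent of $\phi$; one then simply takes $r$ large enough that what remains is the convergent expression of Proposition~\ref{important-proposition}. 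It is also worth noting that on the unitary axis $\zeta \in i\mathfrak a_P^*$ the Eisenstein series $E(e,\phi)$ is regular, so no additional care with their meromorphic continuation is required beyond what is recalled above.
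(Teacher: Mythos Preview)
Your overall strategy is the paper's strategy: reduce via Theorem~\ref{coro-7.4} to a spectral majorant, then invoke Proposition~\ref{important-proposition}. Two points, however, need to be filled in.

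First, the idempotent $\xi$ of \S\ref{bounds:matrix} has a fixed archimedean $\K_\infty$-type $W$, and $H(\sbf,\cdot)^{-1}\thorn$ is not $\K_\infty$-finite. So there is no single $\xi$ with $\xi*(H^{-1}\thorn)=H^{-1}\thorn$; one must decompose at the archimedean places as $\sum_{W\in\hat\K_\infty}\xi_W*(H_\infty^{-1}\thorn_\infty)$ (convergent in $C^\infty(G_\infty)$ by Harish-Chandra) and carry an outer sum over $W$. This is harmless because the constant $C$ in Theorem~\ref{coro-7.4} depends only on $\xi_0$, but it should be made explicit.

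Second, and more substantively, your appeal to ``as in \cite{JAMS}'' for absorbing the factors $\sqrt{\dim\mathcal V_P(\pi)_{\chi,\K_0,W}}$ and $\max_\varphi|E(e,\varphi)|$ is precisely the step that is \emph{not} in \cite{JAMS}: these factors arise here only because of the more general idempotent forced by $\thorn$ (see the beginning of \S\ref{bounds:matrix}). The paper handles them by the elementary chain
\[
\sqrt{\dim}\,\sum_\phi|E(e,\phi)|\,\max_\varphi|E(e,\varphi)|\ \le\ (\dim)^{3/2}\max_\varphi|E(e,\varphi)|^2\ \le\ (\dim)^{3/2}\sum_\phi|E(e,\phi)|^2,
\]
and then invokes \cite[Cor.~0.3]{Muller} to control the resulting $(\dim\mathcal V_P(\pi)_{\chi,\K_0,W})^{3/2}$ jointly with Proposition~\ref{important-proposition} for $r$ large. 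You should spell out this step rather than defer to \cite{JAMS}, since it is exactly the new ingredient.
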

\begin{proof}
Since $\thorn$ and $H$ are invariant on the left and right under the compact open subgroup $\K_v$ for each non-archimdean place $v$, there is an associated idempotent $\xi_0=\otimes_{v \text{ non-arch.}}' \xi_v$ in the Hecke algebra $\otimes_{v \text{ non-arch.}}' \mathcal H(G(k_v))$ such that $\xi_0 * (\thorn\cdot H) = (\thorn\cdot H) * \xi_0 = \thorn \cdot H $. By a theorem of Harish-Chandra \cite[Prop.~4.4.3.2]{Warner} we know that $\sum_{W \in \hat\K_\infty} \xi_W * (\thorn_\infty \cdot H_\infty)$ converges in the topology of $C^\infty(G_\infty)$ to $\thorn_\infty \cdot H_\infty$, c.f.~\cite[Appendix 2]{Warner} for a description of the topology.  Following the proof of \cite[Thm.~8.3]{JAMS} and after using Theorem \ref{coro-7.4} we need to show the convergence of 
$$
\sum_{\chi \in \mathfrak X}\sum_{P} n(\mathsf A_P)^{-1} \sum_{W \in \hat\K_\infty} \int_{\Pi(M_P)}\Big( \sum_{\phi
\in \mathcal B_P(\pi)_{\chi}\cap \mathcal V_P(\pi)_{\chi, \K_0, W}} 
\hspace{-25pt} \Lambda(\phi)^{-r} |E(e, \phi)|\sqrt{\dim \mathcal V_P(\pi)_{\chi, \K_0, W}} 
$$
\begin{equation}\label{equation}
\times \max_{\varphi \in \mathcal B_P(\pi)_\chi \cap \mathcal V_P(\pi)_{\chi, \K_0, W}} \hspace{-25pt}
 \{ |E(e, \varphi)|\}\Big)\, \mathrm{d}\pi
\end{equation}
for $r$ large.  The outermost summation is only over those classes $\chi$ for which there is a $\K_0$-fixed $\phi$ with $\Lambda(\phi) \ne 0$. Note for all $\phi, \phi' \in \mathcal B_P(\pi)_{\chi}\cap \mathcal V_P(\pi)_{\chi, \K_0, W}$, $\Lambda(\phi) = \Lambda(\phi')$. Let us denote this common value with $\Lambda(\pi, \chi, P, \K_0, W)$. We have 
$$
\sqrt{\dim \mathcal V_P(\pi)_{\chi, \K_0, W}}  \sum_{\phi
\in \mathcal B_P(\pi)_{\chi}\cap \mathcal V_P(\pi)_{\chi, \K_0, W}} \hspace{-25pt}  |E(e, \phi)|  \max_{\varphi \in \mathcal B_P(\pi)_\chi \cap \mathcal V_P(\pi)_{\chi, \K_0, W}}
 \hspace{-25pt} \{ |E(e, \varphi)|\}
$$
$$
\leq (\dim \mathcal V_P(\pi)_{\chi, \K_0, W})^{3/2} \max_{\varphi \in \mathcal B_P(\pi)_\chi \cap \mathcal V_P(\pi)_{\chi, \K_0, W}}
 \hspace{-25pt} \{ |E(e, \varphi)|^2\}
$$
$$
\leq (\dim \mathcal V_P(\pi)_{\chi, \K_0, W})^{3/2} \sum_{\phi
\in \mathcal B_P(\pi)_{\chi}\cap \mathcal V_P(\pi)_{\chi, \K_0, W}} \hspace{-25pt} |E(e, \phi)|^2.
$$
Consequently, the expression \eqref{equation} is majorized by 
$$
\sum_{\chi \in \mathfrak X}\sum_{P} n(\mathsf A_P)^{-1} \hspace{-5pt} \sum_{W \in \hat\K_\infty} \hspace{2pt} \int\limits_{\Pi(M_P)}
\hspace{-5pt} \left( \hspace{-5pt} \sum_{\,\,\,\phi
\in \mathcal B_P(\pi)_{\chi}\cap \mathcal V_P(\pi)_{\chi, \K_0, W}}  
\hspace{-30pt} \Lambda(\phi)^{-r}(\dim \mathcal V_P(\pi)_{\chi, \K_0, W})^{3/2} |E(e, \phi)|^2 \right) \, \mathrm{d}\pi
$$
which, by Proposition \ref{important-proposition} and \cite[Corollary 0.3]{Muller}, is convergent for large $r$. 
\end{proof}

\subsection{Automorphic characters}
We next handle the contribution to the spectral decomposition \eqref{eqn:spectral_expansion} coming from the automorphic
characters. We are interested in the following height integrals

\begin{equation} \label{def:character_integral}
	\widehat{H}(\br,\sbf,\chi) = \int_{G(\Adele_k)}H(\sbf,g)^{-1} \thorn(g) \chi(g)  \mathrm{d}g,
\end{equation}
where $\chi$ is an automorphic character of $G$. 
First note that $\thorn_v, \chi_v$, and $H_v$
take the constant value $1$ on $G(\OO_v)$ for all but finitely many $v$ (this follows from Lemma \ref{lem:K_v}).
We have also normalised
our measures so that $G(\OO_v)$ has volume $1$ for all but finitely many $v$, thus there is 
an Euler product decomposition
\begin{equation} \label{def:local_character_integral}
	\widehat{H}(\br,\sbf,\chi) = \prod_v \widehat{H}_v(\br,\sbf,\chi), \,\, 
	\widehat{H}_v(\br,\sbf,\chi_v) = \int_{G(k_v)} \hspace{-10pt}H_v(\sbf,g_v)^{-1} \thorn_v(g_v) \chi(g_v)  \mathrm{d}g_v.
\end{equation}
In the case where $\br$ is trivial, a regularisation
of such integrals was obtained in \cite[Thm.~7.1]{JAMS}, whereas in the toric case a regularisation 
was obtained in \cite[Lem.~5.10]{Lou13}. In this section, we obtain an analogue of these results in our setting.

\subsubsection{Local height integrals}

We let $\K_v$ be as in Lemma \ref{lem:K_v}.

\begin{lemma} \label{lem:local}
	Let $v$ be a place of $k$ and $\chi_v$ a character of $G(k_v)$.
	\begin{enumerate}
		\item The local height integral 
		$\widehat{H}_v(\br,\sbf,\chi_v)$ is absolutely convergent and holomorphic in the domain $\mathcal{T}_{-1}$.
		\label{item:converges}
		\item The local height integral $\widehat{H}_v(\br,\sbf,1)$ for the trivial character is non-zero
		for any real $\sbf$ in the domain $\mathcal{T}_{-1}$. \label{item:non-zero}
		\item If $v$ is non-archimedean and $\chi_v$ is non-trivial on $\K_v$, then $\widehat{H}_v(\br,\sbf,\chi_v) = 0$.
		\label{item:vanishes}

	\end{enumerate}
\end{lemma}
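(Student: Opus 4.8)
The plan is to handle the three parts in turn; only part~(1) requires real work, the other two being formal consequences of it. For part~(1), I would first reduce to a statement about a real integral: since $|\thorn_v|\le 1$ and $|\chi_v|\equiv 1$, we have the pointwise bound $\bigl|H_v(\sbf,g_v)^{-1}\thorn_v(g_v)\chi_v(g_v)\bigr|\le H_v(\sigma,g_v)^{-1}$, where $\sigma=(\re s_\alpha)_{\alpha\in\A}$, so it suffices to prove that $\int_{G(k_v)}H_v(\sigma,g_v)^{-1}\,\mathrm{d}g_v$ converges for $\sigma\in\mathcal{T}_{-1}$, i.e.\ $\re s_\alpha>\kappa_\alpha$ for all $\alpha$. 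This is a local integrability question near the boundary $X\setminus G=\bigcup_{\alpha\in\A}D_\alpha$, which I would settle by a partition of unity on the compact space $X(k_v)$: near a point lying on $\bigcap_{\alpha\in S}D_\alpha$, choosing analytic coordinates $(x_\alpha)$ with $D_\alpha=\{x_\alpha=0\}$ (possible since $\overline D$ is a strict normal crossings divisor), a $G\times G$-invariant top-degree form on $G$ acquires a pole of order exactly $\kappa_\alpha+1$ along $D_\alpha$ --- this uses the anticanonical formula $-K_X=\sum_\alpha(\kappa_\alpha+1)D_\alpha$ and the linear independence of the $D_\alpha$ in $\Pic X$, both from Proposition~\ref{prop:Picard} --- while $\|d_\alpha(g_v)\|_v\asymp|x_\alpha|_v$ for $\alpha\in S$ and $\|d_\alpha(g_v)\|_v$ is bounded away from $0$ and $\infty$ for $\alpha\notin S$. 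Hence the integrand is $\asymp\prod_{\alpha\in S}|x_\alpha|_v^{s_\alpha-\kappa_\alpha-1}$ up to bounded factors, which is locally integrable precisely on $\mathcal{T}_{-1}$; the same computation underlies the convergence analysis in \cite{JAMS}. Holomorphy on $\mathcal{T}_{-1}$ then follows routinely: for $K\subset G(k_v)$ compact, $\sbf\mapsto\int_K H_v(\sbf,g_v)^{-1}\thorn_v(g_v)\chi_v(g_v)\,\mathrm{d}g_v$ is holomorphic on $\CC^\A$ because $\sbf\mapsto\prod_\alpha\|d_\alpha(g_v)\|_v^{-s_\alpha}$ is entire, and as $K$ exhausts $G(k_v)$ the convergence to $\widehat H_v(\br,\sbf,\chi_v)$ is locally uniform on $\mathcal{T}_{-1}$ (dominated by the real-part integral above), so Morera's theorem applies variable by variable.

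For part~(2): when $\sbf$ is real and $\chi_v=1$ the integrand $H_v(\sbf,g_v)^{-1}\thorn_v(g_v)=\bigl(\prod_\alpha\|d_\alpha(g_v)\|_v^{s_\alpha}\bigr)\thorn_v(g_v)$ is everywhere non-negative, since each $d_\alpha$ is nowhere vanishing on $G$ and so $\|d_\alpha(g_v)\|_v>0$. Moreover $\thorn_v$ is the indicator function of $G(k_v)_\br=\bigcap_{\rho\in\R}\ker\rho_v$, a set which contains the identity and is open, because each $\rho_v$ is a continuous homomorphism with finite (hence discrete) image in $S^1$, so $\ker\rho_v$ is open; a non-empty open set has positive Haar measure. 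Thus the integral, which converges by part~(1), is that of a non-negative function strictly positive on a set of positive measure, hence is itself strictly positive. For part~(3): if $v$ is non-archimedean and $\chi_v(\kappa_0)\ne1$ for some $\kappa_0\in\K_v$, then since $H_v$ and $\thorn_v$ are left $\K_v$-invariant by Lemma~\ref{lem:K_v}, the substitution $g_v\mapsto\kappa_0 g_v$ (valid by left-invariance of $\mathrm{d}g_v$ and the absolute convergence from part~(1)) yields $\widehat H_v(\br,\sbf,\chi_v)=\chi_v(\kappa_0)\,\widehat H_v(\br,\sbf,\chi_v)$, forcing $\widehat H_v(\br,\sbf,\chi_v)=0$.

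The only genuine obstacle is the local integrability computation in part~(1) --- pinning down that the domain of convergence is exactly $\mathcal{T}_{-1}$, which rests on the structure of the invariant measure near the strict normal crossings boundary divisor together with the formula \eqref{def:kappa} for $-K_X$; everything else is formal.
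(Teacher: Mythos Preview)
Your proposal is correct and follows essentially the same approach as the paper: for part~(1) you make the same reduction via $|\thorn_v\chi_v|\le 1$ and then sketch the local integrability argument that the paper simply cites from \cite[Thm.~6.7]{JAMS}; parts~(2) and~(3) match the paper's arguments almost verbatim, the only cosmetic difference being that you use left rather than right translation by $\kappa_0$ in part~(3), which is immaterial since $G$ is unimodular and both $H_v$ and $\thorn_v$ are bi-$\K_v$-invariant.
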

\begin{proof}
	As $|\thorn_v\chi_v| \leq 1$, part \eqref{item:converges} follows from the proof of \cite[Thm.~6.7]{JAMS}.
	Part \eqref{item:non-zero} follows simply from the fact that the integral of a continuous function that is positive on the support of a measure is non-zero.
	
	For \eqref{item:vanishes}, choose $\kappa_v \in \K_v$
	such that $\chi_v(\kappa_v) \neq 1$. 	Recall that $\thorn_v$ and $H_v$ are bi-$\K_v$-invariant, by the construction of $\K_v$ (cf.~Lemma \ref{lem:K_v}).
	Using this and the fact that $\mathrm{d}g_v$ is a Haar measure, we find that
	\begin{align*}
	\widehat{H}_v(\br,\sbf,\chi_v) 
	&= \int_{G(k_v)} H_v(\sbf,g_v\kappa_v)^{-1}
	\thorn_v(g_v\kappa_v) \chi_v(g_v\kappa_v)  \mathrm{d}g_v\\
	&=  \int_{G(k_v)} H_v(\sbf,g_v)^{-1}
	\thorn_v(g_v) \chi_v(g_v)\chi_v(\kappa_v)  \mathrm{d}g_v\\
	& = \chi_v(\kappa_v)\widehat{H}_v(\br,\sbf,\chi_v),
	\end{align*}
	whence $\widehat{H}_v(\br,\sbf,\chi_v) = 0$, as claimed.
\end{proof}

\begin{remark}
	Part \eqref{item:vanishes} of Lemma \ref{lem:local} is one of the crucial places in the argument where we use the fact that 
	the Brauer group elements under consideration are \emph{algebraic}. This will
	mean that only finitely many characters have non-zero height integral in the spectral decomposition \eqref{eqn:spectral_expansion}.
	For transcendental elements, many more height integrals appear (a similar phenomenon occurred in the case
	of toric varieties \cite[Rem.~5.5]{Lou13}).
\end{remark}

\subsubsection{Partial Euler products}
We shall regularise the height integrals using the partial Euler products from \cite[\S3.2]{Lou13}.
We briefly recall their definition and basic properties.

Let $\mathscr{X}$ be a finite group of Hecke characters of $k$ and let $\chi: \Adele^*/k^* \to S^1$ be a Hecke character of $k$. The partial Euler product
of interest to us is
\begin{equation}\label{def:partial_zeta_function}
	L_{\X}(\chi,s)=\prod_{\substack{v \in \Val(k) \\ \rho_v(\pi_v) = 1  \\ \forall \rho \in \X}} \left(1-\frac{\chi_{v}(\pi_{v})}{q_v^{s}}\right)^{-1},
	\quad \re s > 1,
\end{equation}
where the Euler product is only taken over those non-archimedean places $v$ for
which $\chi_v$ and $\rho_v$ are unramified for all $\rho \in \X$. Here $\pi_v$ denotes a uniformiser of $k_v$ and $q_v$
the size of the residue field.

\begin{lemma} \label{lem:partial}
	Assume that $\chi$ has finite order.
	If $\chi \not \in \X$, then $L_{\X}(\chi,s)$ admits a holomorphic
	continuation to $\re s = 1$. If $\chi \in \X$, then $(s-1)^{1/|\X|}L_{\X}(\chi,s)$ 
	admits a holomorphic continuation to the line $\re s =1$; in particular
	$L_{\X}(\chi,s)$  has a branch point singularity of order $-1/|\X|$ at $s=1$.
\end{lemma}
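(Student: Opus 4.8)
The plan is to relate the partial Euler product $L_{\X}(\chi,s)$ to a genuine Hecke $L$-function, for which analytic continuation and nonvanishing on $\re s = 1$ are classical, and then understand the discrepancy between the two via a finite set of ``bad'' local factors and the product over the places removed by the condition $\rho_v(\pi_v) = 1$ for all $\rho \in \X$. Write $S$ for the finite set of places at which $\chi$ or some $\rho \in \X$ is ramified; outside $S$ the condition $\rho_v(\pi_v)=1$ for all $\rho$ is well-defined, and by Chebotarev (applied to the finite abelian extension cut out by $\X$) the set of $v \notin S$ with $\rho_v(\pi_v)=1$ for all $\rho$ has density $1/|\X|$. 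So $L_{\X}(\chi,s)$ is an Euler product over a set of places of density $1/|\X|$.

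The key step is to compare $L_{\X}(\chi,s)$ with the full Hecke $L$-function $L(\chi,s) = \prod_v (1 - \chi_v(\pi_v)q_v^{-s})^{-1}$. Taking logarithms, $\log L_{\X}(\chi,s) = \sum_{v \in \text{good set}} \sum_{m \ge 1} \chi_v(\pi_v)^m/(m q_v^{ms})$. Using the orthogonality relations for the finite group $\X$ — concretely, $\tfrac{1}{|\X|}\sum_{\rho \in \X}\rho_v(\pi_v)^m$ is the indicator that $\rho_v(\pi_v)^m = 1$ for all $\rho$ — one rewrites the ``good set'' condition and picks up, after a standard manipulation, a main term $\tfrac{1}{|\X|}\log L(\mathbf{1},s) = \tfrac{1}{|\X|}\log \zeta_k(s)$ when $\chi \in \X$, and otherwise only contributions from $\log L(\chi \rho, s)$ with $\chi\rho \neq \mathbf{1}$, which are holomorphic and nonzero at $\re s = 1$. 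The precise bookkeeping is: for $m=1$ the term $\sum_v \chi_v(\pi_v)\cdot\tfrac{1}{|\X|}\sum_\rho \rho_v(\pi_v)\, q_v^{-s}$ contributes $\tfrac{1}{|\X|}\sum_{\rho}\log L(\chi\rho,s)$, up to finitely many Euler factors from $S$ and up to the $m \ge 2$ tail which converges absolutely for $\re s > 1/2$; the condition $\rho_v(\pi_v)=1$ forces $\rho_v(\pi_v)^m=1$ automatically, so no extra subtlety arises at higher $m$. Since $\chi$ has finite order, all the $\chi\rho$ do too, and each $L(\chi\rho,s)$ with $\chi\rho\neq\mathbf{1}$ extends holomorphically and without zeros to $\re s \ge 1$ (classical, e.g.\ via Hecke/Tate), while $\zeta_k(s)$ has a simple pole at $s=1$ and no zeros on $\re s =1$.

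Putting this together, one gets $L_{\X}(\chi,s) = \zeta_k(s)^{1/|\X|\cdot[\chi\in\X]} \cdot G(s)$ where $G(s)$ is holomorphic and nonvanishing on $\re s \ge 1$ (a finite product of fractional powers of $L(\chi\rho,s)$'s times the correction factors from $S$ and the absolutely convergent tail); here the fractional power $\zeta_k(s)^{1/|\X|}$ is interpreted via its principal branch, legitimate near $s=1$ since $\zeta_k(s)(s-1)$ is holomorphic and nonzero there. If $\chi \notin \X$ the exponent of $\zeta_k$ is $0$, giving holomorphic continuation to $\re s = 1$ directly. If $\chi \in \X$, then $(s-1)^{1/|\X|}L_{\X}(\chi,s) = \bigl((s-1)\zeta_k(s)\bigr)^{1/|\X|} G(s)$, which is holomorphic and nonzero at $s=1$, so $L_{\X}(\chi,s)$ has a branch point singularity of order $-1/|\X|$ at $s=1$ in the sense of \S\ref{sec:notation}.

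The main obstacle I anticipate is the careful justification of the fractional-power manipulation and the holomorphy of $G(s)$ on the full line $\re s = 1$ (not merely near $s=1$): one must ensure that the exponents of the auxiliary $L(\chi\rho, s)$ are handled so that branch cuts are chosen consistently, and that the finitely many omitted Euler factors at $v \in S$ and the higher-prime-power tail (convergent for $\re s > 1/2$) do not introduce spurious zeros on $\re s = 1$. This is exactly the content of the partial Euler product machinery of \cite[\S3.2]{Lou13}, so in practice the cleanest route is to cite that reference for the general statement and only indicate the specialisation to the present $\X$ and $\chi$; the ``hard part'' is purely the classical nonvanishing $L(\chi\rho,1)\neq 0$ for $\chi\rho\neq\mathbf 1$, which is standard.
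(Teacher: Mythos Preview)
Your proposal is correct and follows essentially the same approach as the paper: the paper cites \cite[Lem.~3.2]{Lou13} for the identity $L_{\X}(\chi,s)^{|\X|} = G(\X,\chi,s)\prod_{\rho\in\X}L(\rho\chi,s)$ with $G$ holomorphic and nonzero on $\re s>1/2$ (which is precisely what your character-orthogonality computation yields), and then invokes the standard analytic properties of $L(\rho\chi,s)$ and the $|\X|$th-root argument exactly as you describe. Your discussion of the branch-cut and nonvanishing issues in the final paragraph matches the paper's treatment.
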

See \S\ref{sec:notation} for our conventions regarding holomorphicity and branch point singularities.
\begin{proof}
	In \cite[Lem.~3.2]{Lou13} it is shown that
	$$L_{\X}(\chi,s)^{|\X|} =G(\X,\chi,s) \prod_{\rho \in \X} L(\rho\chi,s), \quad \re s > 1,$$
	where $G(\X,\chi,s)$ is holomorphic and non-zero on $\re s > 1/2$.
	If $\chi\rho$ is non-trivial then the Hecke $L$-function
	$L(\rho\chi,s)$ is holomorphic and non-zero on some open
	neighbourhood of $\re s \geq 1$, so we can take its $|\X|$th-root in this domain.
	Otherwise we obtain the Dedekind zeta function $\zeta_k(s)$;
	in this case $((s-1)\zeta_k(s))^{1/|\X|}$ admits a holomorphic
	continuation to $\re s \geq 1$ and is non-zero at $s=1$
	(this follows from a minor adaptation of the arguments given in \cite[\S II.5.2]{Ten95}).
	The result follows.
\end{proof}

\subsubsection{Regularisation}
\label{subsec: regularisation}

We next recall how to associate a collection of Hecke characters to an automorphic character (see \cite[\S2.8]{JAMS}).
To construct these Hecke characters, one first performs a transfer to reduce to the quasi-split case $G'$ 
(see \S\ref{sec:transfer_Br_Ch}). By Lemma \ref{lem:WR}, the restriction of an automorphic
character $\chi$ of $G'$ to  $T'$ is a collection automorphic characters of the tori $\Res_{k_\alpha/k} \Gm$. But by the definition of the Weil restriction, an automorphic character of $\Res_{k_\alpha/k} \Gm$ is exactly an automorphic character of $\mathbb{G}_{\mathrm{m}, k_\alpha}$, i.e.~a Hecke character $\chi_\alpha$ over $k_\alpha$.
For $\alpha \in \A$, we denote by $\R_\alpha$ the collection
of Hecke characters induced by $\R$ from this construction. We also let $\kappa_\alpha$ be as in \eqref{def:kappa}.

\begin{theorem} \label{thm:characters}
	Let $\chi$ be an automorphic character of $G$ and let
	$\widehat{H}(\br,\sbf,\chi)$ be as in \eqref{def:character_integral}. Then we have 
	$$\widehat{H}(\br,\sbf,\chi) =  f(\sbf,\chi) \prod_{\alpha \in \A} L_{\R_\alpha}(\chi_\alpha,s_\alpha - \kappa_\alpha),
	\quad \text{for } \sbf \in \mathcal{T}_{0},$$
	where $f(\sbf,\chi)$ is holomorphic on $\mathcal{T}_{-\varepsilon}$ for some $\varepsilon > 0$
	and $f(\sbf, 1)$ is non-zero for any real $\sbf$ in this domain.
\end{theorem}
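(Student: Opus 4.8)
The plan is to compute the Euler product $\widehat{H}(\br,\sbf,\chi) = \prod_v \widehat{H}_v(\br,\sbf,\chi_v)$ place by place, isolating the divergent part as a product of the partial Hecke $L$-functions $L_{\R_\alpha}(\chi_\alpha, s_\alpha - \kappa_\alpha)$ and absorbing everything else into a factor $f(\sbf,\chi)$ holomorphic on some $\mathcal{T}_{-\varepsilon}$. First I would reduce to the quasi-split inner form $G'$: using the transfer maps of \S\ref{sec:transfer} and \S\ref{sec:transfer_Br_Ch} (Lemmas \ref{lem:transfer_residues} and \ref{lem:transfer_characters}), the height integral, the function $\thorn$, and the associated Hecke characters $\R_\alpha$ are all compatible with replacing $G$ by $G'$, since $\Inn G$ is connected and acts trivially on the relevant combinatorial data; one must check that the smooth adelic metric transports appropriately, but only the leading singular behaviour matters. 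So I assume $G = G'$ is quasi-split with Borel $B$, maximal torus $T$, and toric slice $Z$ as in Lemma \ref{lem:spherical}.

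Next, for all but finitely many non-archimedean $v$, everything is unramified and $\widehat{H}_v(\br,\sbf,\chi_v)$ can be computed explicitly. This is the technical heart: following \cite[Thm.~6.7]{JAMS} (and its toric analogue \cite[Lem.~5.10]{Lou13}), one uses the Iwasawa/Cartan decomposition together with the description of the height in terms of the coordinates $x_\alpha$ on the toric variety $Z$, integrating over $G(k_v) = \K_v Z(k_v) \K_v$ (up to the standard measure-theoretic normalisations). Because $\chi_v$, $\thorn_v$, and $H_v$ are $\K_v$-bi-invariant, the integral collapses onto $Z(k_v) \cong \prod_\alpha (\Res_{k_\alpha/k}\Gm)(k_v)$ weighted by the relevant modulus character, and $\thorn_v$ restricted to this torus is, by \eqref{eqn:thorn_rho}, the indicator of the common kernel of the $\rho_v$ for $\rho \in \R$ — equivalently, in terms of the Hecke characters $\R_\alpha$, the condition $\rho_{\alpha,v}(\varpi_v) = 1$ for all $\rho \in \R$. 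A geometric-series computation in each torus factor then yields, up to a factor $1 + O(q_v^{-1-\delta})$, exactly the local Euler factor of $L_{\R_\alpha}(\chi_\alpha, s_\alpha - \kappa_\alpha)$ (the shift by $\kappa_\alpha$ coming from \eqref{def:kappa} / the modulus character, and the restriction on which places contribute coming precisely from the definition \eqref{def:partial_zeta_function} of the partial Euler product). The $\kappa_\alpha + 1$ appearing in $-K_X$ is what makes the product converge for $\re s_\alpha > \kappa_\alpha + 1$ and singular on $\re s_\alpha = \kappa_\alpha + 1$, matching $L_{\R_\alpha}(\chi_\alpha, \cdot)$ having its branch singularity at argument $1$.

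For the remaining (finitely many ramified and all archimedean) places, Lemma \ref{lem:local}(1) gives that $\widehat{H}_v(\br,\sbf,\chi_v)$ is holomorphic on $\mathcal{T}_{-1}$, so these contribute only to $f(\sbf,\chi)$; by Lemma \ref{lem:local}(3), if $\chi_v$ is ramified at a non-archimedean $v$ the integral vanishes, which is consistent with the corresponding vanishing in the analysis (the character contributes nothing). Collecting: define $f(\sbf,\chi)$ to be the product of the finitely many "bad" local integrals times the ratio of the good local integrals to their leading Euler factors; the latter ratio is a product $\prod_v (1 + O(q_v^{-1-\delta}))$, hence converges absolutely and defines a holomorphic function on some $\mathcal{T}_{-\varepsilon}$. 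Non-vanishing of $f(\sbf,1)$ for real $\sbf$ follows from Lemma \ref{lem:local}(2) at the bad places, together with positivity (hence non-vanishing) of each good local factor and of the convergent correction product for real $\sbf$, as in \cite[Thm.~7.1]{JAMS}. The main obstacle I anticipate is the explicit unramified local computation: correctly bookkeeping the measure normalisations, the modulus character $\delta_P$ / the $\rho_P$-shift against the $\kappa_\alpha$, and the precise way $\thorn_v$ restricted to $Z(k_v)$ becomes the unramified-place condition defining $L_{\R_\alpha}$ — everything else is a matter of assembling convergent corrections.
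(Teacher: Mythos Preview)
Your approach is essentially correct and would succeed, but it takes a more laborious route than the paper. The paper's proof exploits the character orthogonality identity $\thorn_v = |\R|^{-1}\sum_{\rho \in \R}\rho_v$ \emph{at the level of the height integral itself}, writing
\[
\widehat{H}_v(\br,\sbf,\chi_v) = \frac{1}{|\R|}\sum_{\rho \in \R}\widehat{H}_v(0,\sbf,\rho_v\chi_v),
\]
thereby reducing immediately to the case $\br = 0$ already handled in \cite[Thm.~7.1]{JAMS}. That result supplies the local Euler factor for each $\widehat{H}_v(0,\sbf,\rho_v\chi_v)$ up to $1 + O(q_v^{-1-\varepsilon})$; a second application of character orthogonality to the sum $|\R|^{-1}\sum_{\rho}\rho_{\alpha_v}(\pi_{\alpha_v})$ then collapses the expression to exactly the restricted Euler factor defining $L_{\R_\alpha}$. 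You instead propose to recompute the local integral from scratch with $\thorn_v$ kept inside the integrand, via Cartan decomposition onto the torus slice $Z$; this works but re-derives the content of \cite[Thm.~7.1]{JAMS} rather than quoting it.

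One point to tighten: your global ``reduce to the quasi-split inner form $G'$'' is not quite how the argument runs. The adelic groups $G(\Adele_k)$ and $G'(\Adele_k)$ are genuinely different, so one cannot simply transport the height integral. What is actually used (implicitly, inside the proof of \cite[Thm.~7.1]{JAMS}) is that for almost all places $v$ the local groups $G_{k_v}$ and $G'_{k_v}$ are isomorphic, and it is this \emph{local} identification at good places that links the local integral on $G(k_v)$ to the Hecke characters $\chi_\alpha$ constructed via $T' \subset G'$. The transfer results you cite are needed later (Lemma~\ref{lem:residues}) to match residues with Hecke characters, not here to move the integral itself.
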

\begin{proof}
	It suffices to study the local height integrals $\widehat{H}_v(\br,\sbf,\chi_v)$.
	Character orthogonality yields
	$$\thorn_v(g_v) = \frac{1}{|\R|}\sum_{\rho \in \R} \rho_v(g_v).$$
	Applying this to our local height integrals, we obtain
	\begin{equation}	\label{eqn:Fourier_thorn_rho}
		\widehat{H}_v(\br,\sbf,\chi_v) = \frac{1}{|\R|}\sum_{\rho \in \R} \widehat{H}_v(0,\sbf,\rho_v \chi_v).
	\end{equation}	
	We can handle the $\widehat{H}_v(0,\sbf,\rho_v \chi_v)$ using \cite[Thm.~7.1]{JAMS}
	(note that whilst the proof of \emph{loc.~cit.} is correct there is a typo in the statement;
	the relevant $L$-functions need to be shifted by $-\kappa_\alpha$).
	
	We  now introduce some notation. Let $\A_v$ be the set of boundary divisors of $X_{k_v}$.
	For $\alpha_v \in \A_v$ and $\alpha \in \A$, we say that $\alpha_v \mid \alpha$ if 
	$D_{\alpha_v}$ is an irreducible component of $(D_\alpha)_{k_v}$; the set of such $\alpha_v$
	can be identified with the set of places of $k_{\alpha}$ which divide $v$.
	For $\alpha_v \in \A_v$, we denote by $k_{\alpha_v}$ the algebraic closure of $k_v$ in the residue field
	$\kappa(D_{\alpha_v})$.
	Let $f_{\alpha_v} = [k_{\alpha_v} : k_v]$ and let $\pi_{\alpha_v}$ be a uniformising parameter
	of $k_{\alpha_v}$. Denote by $s_{\alpha_v}$ the complex number $s_\alpha$,
	where $\alpha_v \mid {\alpha}$ (we define $\kappa_{\alpha_v}$ similarly).
	Then in the proof of \cite[Thm.~7.1]{JAMS} 
	(see in particular $(7.7)$ of \emph{ibid.}), it is shown that
	for all but finitely many $v$ we have
	$$\widehat{H}_v(0,\sbf,\chi) = \prod_{\alpha_v \in \A_v} 
	\left(1-\frac{\chi_{\alpha_v}(\pi_{\alpha_v})}{q_v^{f_{\alpha_v}(s_{\alpha_v}-\kappa_{\alpha_v})}}\right)^{-1}
	\left(1 + O_\varepsilon\left(\frac{1}{q_v^{1+\varepsilon}}\right)\right).$$

	Expanding this out, applying \eqref{eqn:Fourier_thorn_rho} and using character orthogonality, we obtain
	\begin{align}
		\widehat{H}_v(\thorn_v,\chi_v;-\sbf)
		&= 1+\frac{1}{|\R|}\sum_{\rho \in \R}\sum_{\alpha_v \in \A_v}
		\frac{\rho_{\alpha_v}(\pi_{\alpha_v})\chi_{\alpha_v}(\pi_{\alpha_v})}{q_v^{f_{\alpha_v}(s_{\alpha_v}-\kappa_{\alpha_v})}}+
		O_\varepsilon\left(\frac{1}{q_v^{1+\varepsilon}}\right) \nonumber \\
		&=\prod_{\substack{\alpha_v \in \A_v \\ \rho_{\alpha_v}(\pi_{\alpha_v}) = 1  \\ \forall \rho \in \R}}  
		\left(1-\frac{\chi_{\alpha_v}(\pi_{\alpha_v})}{q_v^{f_{\alpha_v}(s_{\alpha_v}-\kappa_{\alpha_v})}}\right)^{-1}
		\left(1 + O_\varepsilon\left(\frac{1}{q_v^{1+\varepsilon}}\right)\right). \label{eqn:Euler_factor}
	\end{align}
	Recalling that the set of $\alpha_v \mid \alpha$
	can be identified with the set of places of $k_{\alpha}$ dividing $v$, we can compare
	Euler products to get  the result.
	Finally, the non-vanishing of $f(\sbf, 1)$
	in the stated domain follows from the non-vanishing of the local
	height integrals from Lemma~\ref{lem:local}.
\end{proof}

\subsection{Continuation of the height zeta function}

\subsubsection{Residues and Hecke characters}
By Theorem \ref{thm:characters}, the order of the singularity of the height zeta function 
can be expressed in terms of the Hecke characters attached to $\br$. To deduce a result in terms of 
the original set of Brauer group elements $\br$, we need to relate these Hecke characters to 
the residues of $\br$ (namely obtain an analogue of \cite[Lem.~4.7]{Lou13} in our setting). We achieve
this by reducing to the case of toric varieties, as treated in \emph{loc.~cit.}, using the following.

\begin{proposition} \label{prop:residue_restriction}
	Let $f:V_1 \to V_2$ be a morphism of smooth varieties 
	over a field $F$ of characteristic $0$.
	Let $D_2 \subset V_2$ be a smooth irreducible divisor
	and assume that the scheme-theoretic preimage 
	$D_2 := f^{-1}(D_1)$ is also a smooth irreducible
	divisor. Let $V_i^{\circ}= V_i \setminus D_i$.
	Then the diagram
	$$
	\xymatrix{
		 \Br V_2^{\circ} \ar[d]^{\res_{D_2}} \ar[r] & \Br V_1^{\circ} \ar[d]^{\res_{D_1}} \\
		 \HH^1(D_2,\QQ/\ZZ) \ar[r] & \HH^1(D_1,\QQ/\ZZ),
		}
	$$
	obtained by pulling-back along $f$ and taking residues, is commutative.
\end{proposition}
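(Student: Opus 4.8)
The plan is to realise the residue map as the boundary homomorphism in a localisation sequence followed by a purity isomorphism, and then to deduce the proposition from the functoriality of the localisation sequence together with the compatibility of purity with pullback. Recall that, for a smooth variety $V$ over $F$ with smooth irreducible divisor $D$ and open complement $V^\circ = V\setminus D$, the residue map $\res_D\colon \Br V^\circ \to \HH^1(D,\QQ/\ZZ)$ of \cite[\S6.8]{Poo17} is the composite of the boundary map $\HH^2(V^\circ,\Gm)\to \HH^3_D(V,\Gm)$ in the localisation long exact sequence in \'etale cohomology with the cohomological purity isomorphism $\HH^3_D(V,\Gm)\cong\HH^1(D,\QQ/\ZZ)$; the latter is valid because $(V,D)$ is a smooth pair of codimension one and $F$ has characteristic zero (see \cite{Mil80}), and one only uses it on the torsion, which is where the image of $\Br V^\circ$ lands. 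So it suffices to show that both ingredients are compatible with pullback along $f$.

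The boundary map is formal. Since $D_1=f^{-1}(D_2)$ we have $f^{-1}(V_2^\circ)=V_1^\circ$, so $f$ induces morphisms $V_1^\circ\to V_2^\circ$ and $D_1\to D_2$; pulling back cohomology with supports (using $f^{*}\Gm=\Gm$) yields a morphism of localisation long exact sequences, and in particular a commutative square
$$
\xymatrix{
\HH^2(V_2^\circ,\Gm)\ar[r]^{f^{*}}\ar[d] & \HH^2(V_1^\circ,\Gm)\ar[d]\\
\HH^3_{D_2}(V_2,\Gm)\ar[r]^{f^{*}} & \HH^3_{D_1}(V_1,\Gm)
}
$$
whose vertical maps are the boundary homomorphisms (this also shows that $f^{*}\colon \Br V_2^\circ\to\Br V_1^\circ$ is defined). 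The proposition therefore reduces to the commutativity of the purity square
$$
\xymatrix{
\HH^3_{D_2}(V_2,\Gm)\ar[r]^{f^{*}}\ar[d]^{\cong} & \HH^3_{D_1}(V_1,\Gm)\ar[d]^{\cong}\\
\HH^1(D_2,\QQ/\ZZ)\ar[r]^{(f|_{D_1})^{*}} & \HH^1(D_1,\QQ/\ZZ).
}
$$

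This last square is the only non-formal point, and it is exactly where the hypothesis that $D_1$ is the \emph{scheme-theoretic} preimage of $D_2$ is used: together with the fact that $D_1$ is a divisor on the integral scheme $V_1$, it implies that a local equation for $D_2$ pulls back to a nonzerodivisor on $V_1$ cutting out $D_1$, i.e.\ that $f$ is Tor-independent of the closed immersion $D_2\hookrightarrow V_2$, so that the pullback of the cycle class of $D_2$ is the cycle class of $D_1$; the purity isomorphism, being cap product with that class, is then compatible with $f^{*}$. I expect pinning this down to be the main step, and the cleanest route is to factor $f$ as the closed immersion $\Gamma_f\colon V_1\hookrightarrow V_1\times_F V_2$ followed by the smooth projection $\mathrm{pr}_2\colon V_1\times_F V_2\to V_2$. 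For $\mathrm{pr}_2$ one has $\mathrm{pr}_2^{-1}(D_2)=V_1\times_F D_2$, and compatibility of the residue map with this smooth pullback is the classical smooth-base-change case of purity. For $\Gamma_f$, using $\Gamma_f^{*}\mathrm{pr}_2^{*}=f^{*}$ and the scheme-theoretic hypothesis one checks that the scheme-theoretic intersection of $\Gamma_f(V_1)$ with $V_1\times_F D_2$ is the smooth codimension-one subscheme $\Gamma_f(D_1)$, so that $\Gamma_f$ meets the divisor transversally and the residue compatibility is again the classical case of a regular closed immersion transverse to a smooth divisor. Composing the two cases gives the result. (If $V_1\times_F V_2$ is reducible one runs the argument one component of $V_1\times_F D_2$ at a time, which is harmless since residues and pullbacks are defined component-wise.) This discussion also explains why the hypothesis cannot be weakened: if $f^{-1}(D_2)$ were non-reduced or of the wrong dimension, the cycle class of $D_2$ would not pull back to that of $D_1$, and the purity square would fail to commute.
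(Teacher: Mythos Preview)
Your overall strategy coincides with the paper's: both realise the residue as the boundary map in the localisation sequence followed by a purity isomorphism, and both reduce the proposition to the functoriality of these two ingredients under pullback. The difference lies in how the purity step is handled. The paper works throughout with $\mu_n$ coefficients (passing to the Brauer group only at the end via the Kummer sequence), which lets it simply cite \cite[Thm.~3.4.1]{CT95} for the statement that the purity isomorphism $\HH^{i}_{D}(V,\mu_n)\cong\HH^{i-2}(D,\ZZ/n\ZZ)$ is \emph{functorial}; the commutativity of your ``purity square'' is then immediate. You instead work with $\Gm$ directly and propose to establish this functoriality by factoring $f$ through the graph and the projection, reducing to a smooth morphism and to a regular immersion transverse to the divisor. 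That route is valid, and your observation that the scheme-theoretic hypothesis is exactly what makes the transversality go through is correct, but it is more work than necessary: the paper's choice of finite coefficients sidesteps the factorisation entirely. Your aside that ``one only uses it on the torsion'' is doing the same job as the paper's appeal to the Kummer sequence plus the fact that $\Br$ of a smooth variety is torsion, but the paper's version is cleaner because it never has to say what $\HH^3_D(V,\Gm)$ means beyond its torsion part.
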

\begin{proof}
	For the proof we use an alternative construction \cite[Rem.~3.3.2]{CT95}
	for the residue map coming from the Gysin exact sequence. 
	To do so we recall some facts about the Gysin exact sequence and 
	purity from \cite[\S 3]{CT95} and  \cite[\S1, \S2]{BO75}.
	
	We first work in slightly more generality than the statement of the proposition.
	Let $f:V_1 \to V_2$ be a morphism of $F$-varieties, let $Z_2 \subset V_2$
	be a closed subscheme and $Z_1 := f^{-1}(Z_2)$ its scheme-theoretic preimage.
	Let $n \in \NN$ and $V_i^{\circ}= V_i \setminus Z_i$.
	Then one has a commutative diagram with exact rows 
	\begin{equation} \label{seq:Gysin}
	\begin{split}
	\xymatrix{
\cdots \ar[r] & \HH^{i}(V_2,\mu_n) \ar[d] \ar[r] & \HH^{i}(V_2^\circ,\mu_n) \ar[d]\ar[r] & \HH^{i+1}_{Z_2}(V_2,\mu_n) \ar[d]\ar[r] &\cdots \\
\cdots \ar[r] & \HH^{i}(V_1,\mu_n) \ar[r] & \HH^{i}(V_1^\circ,\mu_n) \ar[r] & \HH^{i+1}_{Z_1}(V_1,\mu_n) \ar[r] & \cdots 
		}
	\end{split}		
	\end{equation}
	for \'etale cohomology with supports (see \cite[Def.~1.1]{BO75},
	\cite[\S2.1]{BO75}, and \cite[(3.3)]{CT95}).
	
	Let now $V$ be a smooth variety over $F$, let $D \subset V$ be a smooth irreducible divisor
	and $V^\circ = V \setminus D$.
	In this case cohomological purity \cite[(3.5) \& Thm.~3.4.1]{CT95} gives a functorial
	isomorphism $\HH^{i}_D(V,\mu_n) \cong \HH^{i-2}(D,\ZZ/n\ZZ)$. Applying
	this to the exact sequence \eqref{seq:Gysin},
	we obtain the  map
	\begin{equation} \label{eqn:Gysin}
		\HH^2( V^{\circ}, \mu_n) \to \HH^1(D,\ZZ/n\ZZ).
	\end{equation}
	To study this, consider the Kummer sequence
	$$ 1 \to \mu_n \to \Gm \to \Gm \to 1.$$
	Applying \'etale cohomology we obtain the exact sequence
	$$0 \to \Pic(V^{\circ})/n \to \HH^2( V^{\circ}, \mu_n) \to \Br( V^{\circ})[n] \to 0.$$
	The map in \eqref{eqn:Gysin} is trivial on $\Pic(V^{\circ})/n$, so we obtain
	 $\Br(V^{\circ})[n] \to \HH^1(D,\ZZ/n\ZZ)$. As $V$ is smooth 
	the Brauer group is torsion \cite[Prop.~6.6.7]{Poo17},
	thus we can combine these maps for all $n$ together to obtain a map
	\begin{equation} \label{eqn:CTresidue}
		\Br V^{\circ} \to \HH^1(D,\QQ/\ZZ).
	\end{equation}
	As explained in \cite[Rem.~3.3.2]{CT95}, this differs from the usual residue map
	by a sign.
	
	Let now $V_1,D_1,V_2,D_2$ be as in the statement of the proposition.
	As $D_1$ and $D_2$ are both smooth irreducible divisors,
	we may use \eqref{seq:Gysin} and \eqref{eqn:Gysin}
	to obtain a commutative diagram
	\begin{equation} \label{eqn:CTcommutes}
	\begin{split}
	\xymatrix{
		 \HH^2( V_2^{\circ}, \mu_n) \ar[d] \ar[r] & \HH^2( V_1^{\circ}, \mu_n) \ar[d] \\
		 \HH^1(D_2,\ZZ/n\ZZ) \ar[r] & \HH^1(D_1,\ZZ/n\ZZ).
		}
	\end{split}		
	\end{equation}
	As the Kummer sequence is functorial, combining 
	the above construction of the residue map \eqref{eqn:CTresidue} 
	with the commutativity of \eqref{eqn:CTcommutes} yields the result.
\end{proof}

Using this we obtain the following.

\begin{lemma} \label{lem:residues}
	The diagram
	$$
	\xymatrix{
		 \Br_e G \ar[d] \ar[r] & (G(\Adele_k)/G(k))^\sim \ar[d] \\
		 \displaystyle{\bigoplus_{\alpha \in \A}} \HH^1(k_\alpha,\QQ/\ZZ) \ar[r] & \displaystyle{\bigoplus_{\alpha \in \A}} (\Gm(\Adele_{k_\alpha})/\Gm(k_\alpha))^\sim}
	$$
	commutes up to sign. Here the top arrow comes from Lemma \ref{lem:Br_char}.
	The left arrow is given by the residues maps attached to the boundary divisors $D_\alpha \subset X$.
	The bottom arrow is an isomorphism, and comes from class field theory and the identification 
	$\HH^1(k,\QQ/\ZZ) = \Hom(\Gal(\overline{k}/k), \QQ/\ZZ)$. 
	The right arrow is the Hecke character construction explained in \S\ref{subsec: regularisation}.
\end{lemma}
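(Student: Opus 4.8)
The plan is to reduce the statement first to the quasi-split case and then to the already-known toric case \cite[Lem.~4.7]{Lou13}, by restricting Brauer classes and automorphic characters to a maximal torus. By definition, the right-hand vertical arrow (the Hecke character construction of \S\ref{subsec: regularisation}) is the composite of the transfer $\tr\colon (G(\Adele_k)/G(k))^\sim \to (G'(\Adele_k)/G'(k))^\sim$ with the restriction-and-decomposition map for $G'$. Since $\Inn G$ is connected it acts trivially on the boundary divisors and on $\Pic\overline{G}$, so under the canonical identifications of $\A$, $\{k_\alpha\}$ and the sequence \eqref{eqn:Div} for $X$ and $X'$, the transfer induces the identity on $\bigoplus_\alpha \HH^1(k_\alpha,\QQ/\ZZ)$. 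Combining this with Lemma~\ref{lem:transfer_residues} (transfer respects residues) and Lemma~\ref{lem:transfer_characters} (transfer commutes with $b \mapsto \chi_b$), a formal diagram chase shows that the square of Lemma~\ref{lem:residues} commutes, up to sign, for $G$ if and only if it does for $G'$. So I may assume $G$ is quasi-split, with Borel $B$, maximal torus $T \subset B$, associated toric subvariety $Z$ and boundary divisors $E_\alpha = D_\alpha \cap Z$ as in \S\ref{sec:wonderful}; by Lemma~\ref{lem:WR}, $T \cong \prod_{\alpha \in \A} \Res_{k_\alpha/k}\Gm$.

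Next I would restrict to $T$. For $b \in \Br_e G$, let $b|_T \in \Br T$ be the pullback along $\iota\colon T \hookrightarrow G$; since $b(e)=0$ we have $b|_T \in \Br_e T$. By functoriality of the global Brauer pairing, $\chi_b(t) = b(\iota(t)) = (b|_T)(t)$ for all $t \in T(\Adele_k)$, so the automorphic character of $T$ attached to $b|_T$ via Lemma~\ref{lem:Br_char} is exactly the restriction of $\chi_b$ to $T(\Adele_k)$. Decomposing via $T \cong \prod_\alpha \Res_{k_\alpha/k}\Gm$ and the identification of an automorphic character of $\Res_{k_\alpha/k}\Gm$ with a Hecke character over $k_\alpha$, it follows that the Hecke characters $(\chi_\alpha)_{\alpha}$ output by the right-hand arrow applied to $\chi_b$ are precisely those attached to $b|_T$ by the toric construction. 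Now I would use Proposition~\ref{prop:residue_restriction} to compare residues. Fix $\alpha$; the generic point $\eta_\alpha$ of $D_\alpha$ is a smooth point since the geometric boundary is strict normal crossings, and $E_\alpha = Z \cap D_\alpha$ is integral (Lemma~\ref{lem:spherical}(2)). Put
$$W = X^\circ \setminus \Big( \operatorname{Sing} D_\alpha \cup \operatorname{Sing} E_\alpha \cup \textstyle\bigcup_{\beta \neq \alpha} D_\beta\Big),$$
where $X^\circ$ is the complement of the colours (so $G,Z \subseteq X^\circ$). Then $W$ is an open subscheme of $X$ containing $\eta_\alpha$, $W \setminus D_\alpha = G$, $(W \cap Z) \setminus E_\alpha = T$, the divisors $W \cap D_\alpha \subset W$ and $W \cap E_\alpha \subset W \cap Z$ are smooth and irreducible over $k$, and $W \cap E_\alpha$ is the scheme-theoretic preimage of $W \cap D_\alpha$ under the inclusion $f\colon W \cap Z \hookrightarrow W$ (using that $E_\alpha = Z \cap D_\alpha$ scheme-theoretically). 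Proposition~\ref{prop:residue_restriction} then yields a commutative square relating $\Br G \to \Br T$, the residue maps into $\HH^1(W \cap D_\alpha,\QQ/\ZZ)$ and $\HH^1(W \cap E_\alpha,\QQ/\ZZ)$, and the pullback $f^*$ between them.

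To conclude, I would pass to generic points: $\HH^1(W\cap D_\alpha,\QQ/\ZZ)$ and $\HH^1(W \cap E_\alpha,\QQ/\ZZ)$ embed into $\HH^1$ of the respective function fields, and by Sansuc's description of the residue map (\S\ref{sec:residues} and \cite[Lem.~9.1]{San81}) the classes $\res_{D_\alpha}(b)$ and $\res_{E_\alpha}(b|_T)$ lie in the subgroup $\HH^1(k_\alpha,\QQ/\ZZ)$ — the field $k_\alpha$ being the field of constants of both $D_\alpha$ and $E_\alpha$ by Lemma~\ref{lem:spherical}(4) — on which $f^*$ restricts to the identity. Hence $\res_{D_\alpha}(b) = \pm \res_{E_\alpha}(b|_T)$ in $\HH^1(k_\alpha,\QQ/\ZZ)$. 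Finally, \cite[Lem.~4.7]{Lou13} applied to the torus $\prod_\alpha \Res_{k_\alpha/k}\Gm$ with compactification $Z$ identifies, via class field theory and up to sign, $\res_{E_\alpha}(b|_T) \in \HH^1(k_\alpha,\QQ/\ZZ)$ with the Hecke character $\chi_\alpha$ over $k_\alpha$ attached to $b|_T$. Combining the three statements proves the lemma up to sign.

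The step I expect to be the main obstacle is the middle one: checking that $W$ has all the listed properties so that Proposition~\ref{prop:residue_restriction} applies and, more delicately, reconciling the purity-theoretic residue maps $\Br(-) \to \HH^1(D,\QQ/\ZZ)$ with the $\HH^1(k_\alpha,\QQ/\ZZ)$-valued residue maps of \S\ref{sec:residues}, in a way compatible with $f^*$. The reduction to the quasi-split case and the restriction of characters to $T$ are formal, and the final identification of residues with Hecke characters is imported wholesale from \cite{Lou13}.
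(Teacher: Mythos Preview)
Your approach is essentially the same as the paper's: reduce to the quasi-split case via transfer, then restrict to the torus $T$ and its toric closure $Z$, using Proposition~\ref{prop:residue_restriction} to compare residues and \cite[Lem.~4.7]{Lou13} for the toric case. Two small points need correction. First, the claim $G \subseteq X^\circ$ is false: the colours meet $G$ (their traces on $G$ are precisely the complement of the big cell $B_-B$), so in fact $W \setminus D_\alpha = X^\circ \cap G = B_-B$, not $G$. This is harmless for the argument, since $B_-B \subset G$ is open and residues along $D_\alpha$ depend only on a neighbourhood of the generic point of $D_\alpha$; you simply restrict $b$ from $G$ to $B_-B$ before applying Proposition~\ref{prop:residue_restriction}. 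Second, $Z$ is not complete (Lemma~\ref{lem:spherical} identifies it with $\mathbb{A}^{|\A|}$), so \cite[Lem.~4.7]{Lou13}, which is stated for complete toric varieties, must be applied to a smooth toric compactification of $Z$ rather than to $Z$ itself; the paper makes exactly this remark.
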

\begin{proof}
By applying a transfer on Brauer groups and automorphic characters, we may assume that $G$ is quasi-split
(see Lemma \ref{lem:transfer_residues} and Lemma \ref{lem:transfer_characters}).

Let $B$ be a Borel subgroup of $G$ and let $x_0$ be a point on $G$ such that $Bx_0B$ is an open orbit on $G$.
Let $T \subset B \subset G$ be a maximal torus; this has the form \eqref{eqn:Weil_restriction}.
We denote the closure of $T$ inside $X^\circ$ by $Z$; this is a smooth toric variety by Lemma \ref{lem:spherical}. Then Lemma~\ref{lem:spherical} tells us that for any $\alpha \in \A$ the scheme theoretic intersection $E_\alpha = D_\alpha \cap Z$ is integral and has the same field of constants as $D_\alpha$ (namely $k_\alpha$).
Consider the following diagram:
$$
	\xymatrix{
      \Br_e G  \ar[r] \ar[d]^{\res_\alpha} &  \Br_e T \ar[r] \ar[d]^{\res_\alpha} & (T(\mathbb A_k)/T(k))^\sim \ar[d] \\
		  \HH^1(k_\alpha,\QQ/\ZZ)    \ar[r]&   \HH^1(k_\alpha,\QQ/\ZZ)  \ar[r]  & (\Gm(\Adele_{k_\alpha})/\Gm(k_\alpha))^\sim
		 }
$$
Applying Proposition \ref{prop:residue_restriction} to the inclusion $Z \to X$
and the open subset of $X$ given by removing the singular locus of $D_\alpha$ and $E_\alpha$, we see that the left hand square commutes.
That the right hand square commutes up to sign is shown in \cite[Lem.~4.7]{Lou13} (this is proved using \cite[Lem.~9.1]{San81}). 
Note that \emph{loc.~cit.} is stated for complete toric varieties, but we can apply this to a compactification of $Z$.
The result now easily follows.
\end{proof}

\subsubsection{The height zeta function}
We now address the main term in the spectral expansion \eqref{eqn:spectral_expansion}. By Theorem \ref{thm:continous_cuspidal_spectrum}, this has the shape
$$\sum_{\chi \in (G(\Adele_k)/G(k))^\wedge} \widehat{H}(\br,\sbf,\chi),$$
where $\widehat{H}(\br,\sbf,\chi)$ is as in \eqref{def:character_integral}.
We first show that this sum is in fact finite.

\begin{lemma} \label{lem:vanish}
	Let $\K_0$ be as at the end of \S \ref{sec:heights}.
	Let
	$$ \mathfrak{X}_{\K_0} = \{ \chi \in (G(\Adele_k)/G(k))^\wedge: \chi(\K_0) = 1\}.$$
	Then $ \mathfrak{X}_{\K_0}$ is finite and for all automorphic characters 
	$\chi \not \in \mathfrak{X}_{\K_0}$ we have
	$\widehat{H}(\br,\sbf,\chi) = 0.$
\end{lemma}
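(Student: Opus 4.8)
The statement has two parts. The vanishing is a quick consequence of the local computations already in place, and I would dispose of it first. Suppose $\chi \notin \mathfrak{X}_{\K_0}$, so that $\chi_v$ is non-trivial on $\K_v$ for some non-archimedean place $v$. Fix $\sbf$ in the region $\mathcal{T}_C$ of absolute convergence and expand $\widehat{H}(\br,\sbf,\chi)$ as the Euler product \eqref{def:local_character_integral}. Each local factor is finite and holomorphic by Lemma \ref{lem:local}(1), and for all but finitely many places $w$ the factor equals $1 + O(q_w^{-1-\varepsilon})$ (as extracted in the proof of Theorem \ref{thm:characters}), so the product converges. By Lemma \ref{lem:local}(3) the factor at $v$ vanishes, hence $\widehat{H}(\br,\sbf,\chi) = 0$. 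This is the easy half.

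For the finiteness of $\mathfrak{X}_{\K_0}$ I would proceed as follows. Under the isomorphism of Corollary \ref{cor:Br_adjoint}, $\mathfrak{X}_{\K_0}$ corresponds to a subgroup $\mathcal{B} \subseteq \Br_e G$, and it suffices to show $\mathcal{B}$ is finite. First, the residue map \eqref{eqn:residue} has finite kernel: by construction this kernel is the image of $\HH^1(k,\Pic \overline{X})$ in $\Br_e G$, and $\Pic\overline{X}$ is a finitely generated torsion-free Galois module whose action factors through a finite quotient (one may reduce to the Galois group of a splitting field), so $\HH^1(k,\Pic\overline{X})$ is finite. It is therefore enough to show that $\res(\mathcal{B}) \subseteq \bigoplus_{\alpha} \HH^1(k_\alpha,\QQ/\ZZ)$ is finite. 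By Lemma \ref{lem:residues}, the residues of $b \in \mathcal{B}$ correspond to the Hecke characters $\chi_\alpha$ over $k_\alpha$ obtained by transferring the associated automorphic character $\chi$ to the quasi-split inner form $G'$ and restricting to the maximal torus $T' \cong \prod_\alpha \Res_{k_\alpha/k}\Gm$ of Lemma \ref{lem:WR}. Since the local transfers are canonical and identify $G(\OO_v)$ with $G'(\OO_v)$ for all but finitely many $v$, the $\K_0$-invariance of $\chi$ forces the transferred character to be trivial on $G'(\OO_v)$ for all $v$ outside a fixed finite set $S$; as $T'(\OO_v)\subseteq G'(\OO_v)$ for almost all $v$, each $\chi_\alpha$ is unramified outside the finite set of places of $k_\alpha$ lying over $S$. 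Combined with the fact that every automorphic character has order dividing a fixed integer $n$ (cf.\ \cite[Rem.~2.2]{JAMS}), it follows that the tuples $(\chi_\alpha)_\alpha$ lie in a fixed finite group of Hecke characters (finiteness of the relevant ray class groups). Hence $\res(\mathcal{B})$ is finite, so $\mathcal{B}$, and therefore $\mathfrak{X}_{\K_0}$, is finite.

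The technical heart is this finiteness half, and within it the point to get right is the passage to $G'$: transferring to the quasi-split form can change the chosen compact subgroups at finitely many places and twists the torus $T'$ by a cocycle, so one must check that neither operation enlarges the ramification set of the $\chi_\alpha$ beyond a finite set. A more hands-on alternative, which avoids $G'$ entirely, is to note that a character trivial on $\K_0$ is trivial on the finite-index subgroup $\K_0 \cap \prod_v G(\OO_v)$ of $\prod_v G(\OO_v)$, so that $\mathfrak{X}_{\K_0}$ is a finite extension of the group of everywhere-unramified automorphic characters of $G$; the finiteness of the latter is the $\br = 0$ situation treated in \cite{JAMS}, and one could simply invoke it.
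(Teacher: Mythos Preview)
Your proof is correct. The vanishing half matches the paper exactly: both simply invoke Lemma~\ref{lem:local}(3) at a place where $\chi_v|_{\K_v}$ is non-trivial.

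For the finiteness half, the paper takes a much shorter route than your main argument. It observes that $\K_0$ has finite index in some maximal compact subgroup of $G(\Adele_{k,f})$ (by Lemma~\ref{lem:K_v}), and then cites \cite[Lem.~4.7]{GMO08} directly for the finiteness of $\mathfrak{X}_{\K_0}$. Your ``hands-on alternative'' at the end is essentially this same reduction, with \cite{JAMS} standing in for \cite{GMO08}. Your primary argument via Corollary~\ref{cor:Br_adjoint}, the residue map, transfer to $G'$, and bounded ramification of the resulting Hecke characters is sound, but it is a genuinely different and more elaborate path: it trades a black-box citation for an explicit unwinding through the Brauer-group machinery already set up in the paper. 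The payoff is that your argument is self-contained within the paper's framework and makes transparent \emph{why} the finiteness holds (bounded order plus bounded ramification of Hecke characters), whereas the paper's proof is a two-line appeal to an external source. The cost is the delicacy you yourself flag: one has to be careful that transfer to $G'$ only disturbs finitely many places, which is true but requires a moment's thought.
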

\begin{proof}
	By Lemma \ref{lem:K_v}, the group $\K_0$ has finite index inside some maximal compact subgroup of 
	the group $G(\Adele_{k,f})$ of finite adeles.
	The finiteness of $\mathfrak{X}_{\K_0}$ thus follows from  \cite[Lem.~4.7]{GMO08}.
	The second part  was shown in Lemma \ref{lem:local}.
\end{proof}

We now restrict our attention to a complex line inside $\CC^\A$ corresponding to a given choice of line bundle. 
Let $L \in (\Pic X) \otimes \RR$ be the class of a big divisor. By Proposition~\ref{prop:Picard} we may write uniquely
$L = \sum_{\alpha \in \A} a_\alpha D_\alpha$ for some $a_\alpha \in \RR_{>0}$. Let
$$a(L) = \max_{\alpha \in \A} \frac{1 + \kappa_\alpha}{a_\alpha}, \quad
\A(L) = \left\{ \alpha \in \A : \frac{1 + \kappa_\alpha}{a_\alpha} = a(L)\right\},
\quad m_\br(L) = \sum_{\alpha \in \A(L)} \frac{1}{|\R_\alpha|}.$$
One easily checks using Proposition \ref{prop:Picard}
that $a(L)$ agrees with the $a$-constant from the Batyrev-Manin conjecture \cite[Def.~2.1]{BM90}.
We also let $\underline{a} = (a_{\alpha})_{\alpha \in \A}$ and consider the finite group
\begin{equation} \label{def:characters_L}
	\mathfrak{X}_{\K_0}(L) = \{ \chi \in \mathfrak{X}_{\K_0} : 
	 \chi_\alpha \in \R_\alpha\,  \forall \alpha \in \A(L)\}
\end{equation}
of automorphic characters (note that $\R \subset \mathfrak{X}_{\K_0}(L)$). The relevant height zeta function is given by 
\begin{equation} \label{eqn:HZF_L}
	Z_{\br,L}(s) := Z_\br(s\underline{a}),
\end{equation}
where $Z_\br$ is as in \eqref{def:HZF}.
Our result on $Z_{\br,L}$ is the following.

\begin{theorem} 	\label{thm:HZF}
	We have 
	$$Z_{\br,L}(s) = (s-a(L))^{-m_\br(L)}g(s) + \sum_{i \in I} (s-a(L))^{-\lambda_i}g_i(s) + h(s)
	, \quad	\re s > a(L),$$
	for some finite index set $I$, some rational numbers $0<\lambda_i < m_\br(L)$,
	and some functions $g,g_i,h$ which are holomorphic 
	on $\re s \geq a(L)$. 	
	Furthermore, we have 
	\begin{equation*}
		\lim_{s \to a(L)}
		(s-a(L))^{m_\br(L)} Z_{\br,L}(s)
		=  \lim_{s \to a(L)}
		(s-a(L))^{m_\br(L)}\sum_{\mathclap{\chi \in \mathfrak X_{\K_0}(L)}}\,\,\,\,\,
		\int_{G(\Adele_k)_\br} \hspace{-15pt} H(s\underline{a}, g)^{-1}\chi(g) \, \mathrm{d} g,
	\end{equation*}
	and this limit is a positive real number. In particular $Z_{\br,L}(s)$ admits a branch point singularity of order
	$-m_\br(L)$ at $s=a(L)$.
\end{theorem}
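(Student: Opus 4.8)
The plan is to feed the complex line $\sbf = s\underline{a}$ into the spectral expansion \eqref{eqn:spectral_expansion} and extract the singularity at $s=a(L)$ from the $1$-dimensional part, the cuspidal and continuous part being harmless. First I would restrict \eqref{eqn:spectral_expansion} to $\sbf=s\underline{a}$ (valid for $\re s$ large) and observe two things. By Theorem~\ref{thm:continous_cuspidal_spectrum} the term $S^\flat(s\underline{a})$ is holomorphic on $\mathcal{T}_{-c}$; since $(\kappa_\alpha+1-c)/a_\alpha<a(L)$ for every $\alpha\in\A$, this region contains a neighbourhood of $\{\re s\ge a(L)\}$, so $S^\flat(s\underline{a})$ contributes only to the holomorphic remainder. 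By Lemma~\ref{lem:vanish} the character sum is finite and equals $\sum_{\chi\in\mathfrak{X}_{\K_0}}\widehat{H}(\br,s\underline{a},\chi)$, so it suffices to study each term near $s=a(L)$.

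The next step is to apply Theorem~\ref{thm:characters}, which gives $\widehat{H}(\br,s\underline{a},\chi)=f(s\underline{a},\chi)\prod_{\alpha\in\A}L_{\R_\alpha}(\chi_\alpha,sa_\alpha-\kappa_\alpha)$ with $f(s\underline{a},\chi)$ holomorphic on a neighbourhood of $\{\re s\ge a(L)\}$. Under $s_\alpha=sa_\alpha$ the argument $sa_\alpha-\kappa_\alpha$ equals $1$ exactly when $s=a(L)$ and $\alpha\in\A(L)$; for $\alpha\notin\A(L)$ it has real part $>1$ throughout a neighbourhood of $\{\re s\ge a(L)\}$, so the corresponding Euler product is holomorphic and non-vanishing there. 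For $\alpha\in\A(L)$, the affine substitution $w-1=a_\alpha(s-a(L))$ carries the branch cut $\{w\le 1\}$ onto $\{s\le a(L)\}$, so Lemma~\ref{lem:partial} shows that $L_{\R_\alpha}(\chi_\alpha,sa_\alpha-\kappa_\alpha)$ is holomorphic at $s=a(L)$ when $\chi_\alpha\notin\R_\alpha$, and otherwise has a branch point singularity of order $-1/|\R_\alpha|$ there, with $(s-a(L))^{1/|\R_\alpha|}L_{\R_\alpha}(\chi_\alpha,sa_\alpha-\kappa_\alpha)$ extending holomorphically and non-vanishingly past $s=a(L)$. Multiplying, the singularity of $\widehat{H}(\br,s\underline{a},\chi)$ at $s=a(L)$ has order $-\sum_{\alpha\in\A(L),\ \chi_\alpha\in\R_\alpha}1/|\R_\alpha|$, which equals $-m_\br(L)$ precisely for $\chi\in\mathfrak{X}_{\K_0}(L)$ and lies in $(-m_\br(L),0]$ otherwise. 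Collecting the terms with $\chi\in\mathfrak{X}_{\K_0}(L)$ yields $(s-a(L))^{-m_\br(L)}g(s)$ with $g$ holomorphic on $\{\re s\ge a(L)\}$, and grouping the remaining terms by the value of their exponent yields $\sum_{i\in I}(s-a(L))^{-\lambda_i}g_i(s)$ with $0<\lambda_i<m_\br(L)$ and $g_i$ holomorphic, together with a holomorphic remainder which with $S^\flat(s\underline{a})$ forms $h(s)$. This is the claimed decomposition. Multiplying it by $(s-a(L))^{m_\br(L)}$ and letting $s\to a(L)$ kills every term except the first and leaves $g(a(L))$; performing the same operation on $\sum_{\chi\in\mathfrak{X}_{\K_0}(L)}\widehat{H}(\br,s\underline{a},\chi)$ alone also gives $g(a(L))$, which is the asserted equality of limits.

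The main obstacle is showing that this common limit is a \emph{positive} real number. Non-negativity is immediate, since $Z_{\br,L}(s)=\sum_{\gamma\in G(k)_\br}H(\underline{a},\gamma)^{-s}$ is a Dirichlet series with non-negative coefficients and $e\in G(k)_\br$, so $Z_{\br,L}(s)>0$ for real $s>a(L)$ and hence $g(a(L))=\lim_{s\to a(L)^+}(s-a(L))^{m_\br(L)}Z_{\br,L}(s)\ge 0$. For strict positivity I would compute the leading coefficient explicitly: by Theorem~\ref{thm:characters} it is a finite sum, over $\chi\in\mathfrak{X}_{\K_0}(L)$, of products of $f(a(L)\underline{a},\chi)$, the finite values $L_{\R_\alpha}(\chi_\alpha,a(L)a_\alpha-\kappa_\alpha)$ for $\alpha\notin\A(L)$, and the non-zero limits $\lim_{s\to a(L)^+}(a_\alpha(s-a(L)))^{1/|\R_\alpha|}L_{\R_\alpha}(\chi_\alpha,sa_\alpha-\kappa_\alpha)$ for $\alpha\in\A(L)$. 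A priori these are complex, so one must rule out cancellation between different characters; this is done exactly as in the toric case \cite[\S5]{Lou13} and the case $\br=0$ \cite[\S7--8]{JAMS}. Concretely, by character orthogonality $\sum_{\chi\in\mathfrak{X}_{\K_0}(L)}\widehat{H}(\br,s\underline{a},\chi)=|\mathfrak{X}_{\K_0}(L)|\int_{G(\Adele_k)}H(s\underline{a},g)^{-1}\thorn(g)\mathbf{1}_{Y}(g)\,\mathrm{d}g$, where $Y=\bigcap_{\chi\in\mathfrak{X}_{\K_0}(L)}\ker\chi$ is a subgroup of positive Haar measure containing $e$; one then identifies the leading coefficient of this single adelic integral as a convergent product of strictly positive local height integrals (positive by Lemma~\ref{lem:local}\eqref{item:non-zero}) times a strictly positive residual factor coming from the Dedekind-type poles of Lemma~\ref{lem:partial}. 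Hence $g(a(L))>0$, and $Z_{\br,L}(s)$ admits a branch point singularity of order $-m_\br(L)$ at $s=a(L)$.
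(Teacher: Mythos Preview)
Your decomposition argument is correct and matches the paper's: restrict the spectral expansion to the line $\sbf=s\underline{a}$, use Theorem~\ref{thm:continous_cuspidal_spectrum} for the holomorphic remainder, Lemma~\ref{lem:vanish} for finiteness, and Theorem~\ref{thm:characters} together with Lemma~\ref{lem:partial} to read off the branch-point exponents character by character. The identification of the limit with $g(a(L))$ is also fine.

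The gap is in your positivity argument. After character orthogonality you correctly arrive at
\[
|\mathfrak{X}_{\K_0}(L)|\int_{G(\Adele_k)_\br\cap Y} H(s\underline{a},g)^{-1}\,\mathrm{d}g,
\qquad Y=\bigcap_{\chi\in\mathfrak{X}_{\K_0}(L)}\ker\chi,
\]
but you then claim this integral is ``a convergent product of strictly positive local height integrals''. It is not: for a global automorphic character $\chi=\prod_v\chi_v$, the kernel $\ker\chi\subset G(\Adele_k)$ is \emph{not} the product $\prod_v\ker\chi_v$ (it only contains it), so $G(\Adele_k)_\br\cap Y$ does not factor over places and Lemma~\ref{lem:local}\eqref{item:non-zero} cannot be applied termwise. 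This is precisely the obstruction to ruling out cancellation.

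The paper's fix, following \cite[Lem.~5.14]{Lou13}, is a minorisation: let $\br(L)\subset\Br_e G$ be the subgroup corresponding to $\mathfrak{X}_{\K_0}(L)$ via Corollary~\ref{cor:Br_adjoint}. Since $\R\subset\mathfrak{X}_{\K_0}(L)$ one has $G(\Adele_k)_{\br(L)}\subset G(\Adele_k)_\br\cap Y$, and $G(\Adele_k)_{\br(L)}=\prod_v\bigcap_{\chi}\ker\chi_v$ \emph{does} factor over places. Thus the limit is bounded below by
\[
\lim_{s\to a(L)}(s-a(L))^{m_\br(L)}\widehat{H}(\br(L),s\underline{a},1).
\]
The crucial (and easy) check is that $m_{\br(L)}(L)=m_\br(L)$, which follows immediately from the definition \eqref{def:characters_L}: for $\alpha\in\A(L)$ the group $\R_\alpha$ is already generated by the $\alpha$-components of $\mathfrak{X}_{\K_0}(L)$. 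Now Theorem~\ref{thm:characters} applied with $\br$ replaced by $\br(L)$ and $\chi=1$ shows this lower bound is strictly positive, since $f(\sbf,1)$ is non-zero for real $\sbf$. Your non-negativity observation via the Dirichlet series is correct but unnecessary once this lower bound is in place.
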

\begin{proof}
	It follows from Theorem \ref{thm:continous_cuspidal_spectrum} that the contribution to the spectral decomposition
	\eqref{eqn:spectral_expansion} from the continuous and cuspidal spectrum is holomorphic on
	$\re s \geq a(L)$. Hence Lemma \ref{lem:partial}, Theorem \ref{thm:characters}, Lemma \ref{lem:residues}
	and Lemma \ref{lem:vanish} 	imply that it is the characters in $\mathfrak{X}_{\K_0}(L)$
	which give rise to the singularity of highest order,
	and that the height integrals have a branch point singularity of order
	at least $-m_\br(L)$ at $s=a(L)$. (Here the lower order terms $(s-a(L))^{-\lambda_i}$
	come from those characters $\chi \in \mathfrak{X}_{\K_0}\setminus \mathfrak{X}_{\K_0}(L)$).
	
	To finish it suffices to show that the leading constant does not vanish, which we do using a 
	variant of the proof of \cite[Lem.~5.14]{Lou13}.
	By character orthogonality, we see that the limit in the statement equals
	\begin{equation} \label{eqn:limit}
		|\mathfrak{X}_{\K_0}(L)|\lim_{s \to a(L)} (s-a(L))^{m_\br(L) }
		\int_{G(\Adele_k)_{\br}^{\mathfrak{X}_{\K_0}(L)}} H(s\underline{a},g)^{-1} \mathrm{d}g.
	\end{equation}
	Let $\br(L) \subset \Br_e G$ be the finite group  attached to 
	the automorphic characters $\mathfrak{X}_{\K_0}(L)$ by Corollary \ref{cor:Br_adjoint}.
	The expression \eqref{eqn:limit} is bounded below by
	$$\lim_{s \to a(L)} (s-a(L))^{m_\br(L) }
	\int_{G(\Adele_k)_{\br(L)}} H(s\underline{a},g)^{-1} \mathrm{d}g.$$
	However this integral is exactly equal to $\widehat{H}(\br(L),s\underline{a},1)$. It thus suffices to show that 
	$$\lim_{s \to a(L)} (s-a(L))^{m_\br(L)} \widehat{H}(\br(L),s\underline{a},1) \ > 0.$$
	As $a(L)$ is independent of $\br$, this will follow from 
	Theorem \ref{thm:characters} provided we show that
	$$
		m_\br(L) = m_{\br(L)}(L).
	$$
	However this is clear from the definition \eqref{def:characters_L} of $\mathfrak{X}_{\K_0}(L)$.
	This completes the proof.
\end{proof}

\subsection{The asymptotic formula}
We now come to our main theorem, of which Theorem \ref{thm:semi-simple} is a special case. 
For completeness we give the full statement.

\begin{theorem}
\label{thm:arbitrarybiglinebundles}
	Let $G$ be an adjoint semi-simple algebraic group over a number field $k$ with wonderful compactification $X$.
	Let $\br \subset \Br_1 G$ be a finite subgroup of algebraic Brauer group elements. 
	Assume that $G(k)_\br \neq \emptyset$. Let $L$ be a big line bundle on $X$.
	Then for all height functions $H_L$ associated with some choice of smooth adelic metric on $L$,
	we have
	$$N(G,H_L,\br,B) \sim c_{X,\br,H_L} B^{a(L)} \frac{(\log B)^{b(L)-1}}{(\log B)^{\Delta_X(L,\br)}},
	 \quad \text{as } B \to \infty,$$
	for some $c_{X,\br,H_L} > 0$. Here 
	\begin{itemize}
	\item 
	$a(L) = \inf\{ a \in \RR :  aL + K_X \in \Lambda_{\mathrm{eff}}(X)\}.$
	\item  $b(L)$ is the codimension of the minimal face of $\Lambda_{\mathrm{eff}}(X)$ containing 
	$a(L)L + K_X.$
	\item  	$$\Delta_X(L,\br)=\sum_{D \subset X}\left(1 - \frac{1}{|\res_D(\br)|}\right),$$
   	where the sum is over those divisors $D$ which do not appear in the support of any effective $\mathbb Q$-divisor $\mathbb Q$-linearly equivalent to $a(L)L + K_X$.
	\end{itemize}

\end{theorem}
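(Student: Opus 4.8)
The plan is to deduce Theorem~\ref{thm:arbitrarybiglinebundles} from Theorem~\ref{thm:HZF} by a Tauberian argument, after reducing to the height zeta function of \eqref{eqn:HZF_L} and comparing the analytic invariant $m_\br(L)$ with the geometric invariants $a(L),b(L),\Delta_X(L,\br)$. First I would reduce: given a height $H_L$ attached to some smooth adelic metric on the big line bundle $L$, Remark~\ref{rem:heights} lets me realise this metric through the construction of \S\ref{sec:heights} for a suitable choice of smooth adelic metrics on the $\OO_X(D_\alpha)$. Writing $L=\sum_{\alpha\in\A}a_\alpha D_\alpha$ with $a_\alpha>0$ (Proposition~\ref{prop:Picard}) and $\underline a=(a_\alpha)_{\alpha}$, one then has $H_L(g)^s=H(s\underline a,g)$; using the Hasse principle \eqref{seq:CFT} to identify $\{\gamma\in G(k):\thorn(\gamma)=1\}$ with $G(k)_\br$, the Dirichlet series $\sum_{g\in G(k)_\br}H_L(g)^{-s}$ is exactly $Z_{\br,L}(s)$, and $N(G,H_L,\br,B)=\#\{g\in G(k)_\br:H_L(g)\le B\}$.

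\emph{Matching the exponents.} Since $\Lambda_{\mathrm{eff}}(X)=\bigoplus_{\alpha}\RR_{\ge0}D_\alpha$ is simplicial of full dimension $\rho(X)=|\A|$ and $-K_X=\sum_{\alpha}(\kappa_\alpha+1)D_\alpha$ (Proposition~\ref{prop:Picard}, \eqref{def:kappa}), one computes $a(L)L+K_X=\sum_{\alpha}(a(L)a_\alpha-\kappa_\alpha-1)D_\alpha$, with non-negative coefficients vanishing precisely on $\A(L)=\{\alpha:(\kappa_\alpha+1)/a_\alpha=a(L)\}$; hence $a(L)=\max_\alpha(\kappa_\alpha+1)/a_\alpha$, and the minimal face of $\Lambda_{\mathrm{eff}}(X)$ containing $a(L)L+K_X$ is $\bigoplus_{\alpha\notin\A(L)}\RR_{\ge0}D_\alpha$, so $b(L)=|\A(L)|$. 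For $\Delta_X(L,\br)$ I would argue only the $D_\alpha$ with $\alpha\in\A(L)$ contribute: a prime divisor of $X$ not among the $D_\alpha$ has $\res(\br)=0$ (as $\br\in\Br G$ and $G$ is regular), hence contributes $0$; each $D_\alpha$ with $\alpha\notin\A(L)$ lies in the support of the effective $\QQ$-divisor $a(L)L+K_X$, hence is excluded; and for $\alpha\in\A(L)$, $D_\alpha$ lies in no effective $\QQ$-divisor $\QQ$-linearly equivalent to $a(L)L+K_X$, since writing such a divisor as $\sum_\beta m_\beta D_\beta+N'$ with $m_\beta\ge0$ and $N'\ge0$ boundary-free, effectivity of $N'$ forces $[N']\in\Lambda_{\mathrm{eff}}(X)$, and comparing coefficients in the basis $\{D_\alpha\}$ (a basis, as \eqref{eqn:Picard} and finiteness of $\Pic G$ make the $D_\alpha$ independent and spanning $\Pic(X)_\QQ$) gives $m_\alpha=0$. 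Combined with $|\res_{D_\alpha}(\br)|=|\R_\alpha|$ from Lemma~\ref{lem:residues}, this yields
\[
\Delta_X(L,\br)=\sum_{\alpha\in\A(L)}\Bigl(1-\tfrac1{|\R_\alpha|}\Bigr)=|\A(L)|-m_\br(L)=b(L)-m_\br(L),
\]
so the exponent of $\log B$ in the statement equals $b(L)-1-\Delta_X(L,\br)=m_\br(L)-1$.

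\emph{Tauberian step.} By Theorem~\ref{thm:HZF}, $Z_{\br,L}(s)$ converges for $\re s>a(L)$, continues holomorphically to an open neighbourhood of $\{\re s\ge a(L)\}$ with the real half-line $s\le a(L)$ deleted (the branch-cut structure coming from Lemma~\ref{lem:partial} and Theorem~\ref{thm:characters}), has at $s=a(L)$ a branch point singularity of order $-m_\br(L)$ together with lower-order ones $-\lambda_i$, and $c_0:=\lim_{s\to a(L)}(s-a(L))^{m_\br(L)}Z_{\br,L}(s)>0$. I would combine this with polynomial growth of $Z_{\br,L}$ in vertical strips — obtained for the non-one-dimensional spectrum from Theorems~\ref{thm:continous_cuspidal_spectrum} and~\ref{coro-7.4} exactly as in \cite{JAMS}, and for the one-dimensional part from the fact that $\sum_{\chi\in\mathfrak{X}_{\K_0}(L)}\widehat H(\br,s\underline a,\chi)$ is a finite sum (Lemma~\ref{lem:vanish}) of products of a bounded holomorphic function with partial Hecke $L$-functions, controlled by convexity. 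A Tauberian theorem for generalised Dirichlet series with branch point singularities of the kind used in \cite{Lou13} then gives
\[
N(G,H_L,\br,B)\sim\frac{c_0}{a(L)\,\Gamma(m_\br(L))}\,B^{a(L)}(\log B)^{m_\br(L)-1},\qquad B\to\infty,
\]
and setting $c_{X,\br,H_L}=c_0/(a(L)\Gamma(m_\br(L)))>0$, together with $m_\br(L)-1=b(L)-1-\Delta_X(L,\br)$, finishes the proof.

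\emph{Main obstacle.} The hard part is the Tauberian step, and within it the polynomial vertical growth of $Z_{\br,L}(s)$ up to and slightly past the critical line: this requires marrying the spectral-theoretic estimates of \cite{JAMS} for the cuspidal and continuous spectrum with classical analytic number theory (convexity bounds and standard zero-free regions) for the finitely many Hecke $L$-functions of \S\ref{subsec: regularisation}. Moreover, since the singularities are of non-integral order and come with the extra lower-order branch points $(s-a(L))^{-\lambda_i}$, one must use a Tauberian theorem robust enough to extract the leading term from a sum of branch point singularities plus a holomorphic remainder, rather than a Wiener--Ikehara-type statement.
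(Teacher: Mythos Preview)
Your reduction via Remark~\ref{rem:heights} and your matching of the exponents $a(L),b(L),\Delta_X(L,\br)$ with $m_\br(L)$ is exactly what the paper does (indeed your argument for which $D_\alpha$ contribute to $\Delta_X(L,\br)$ is slightly more explicit than the paper's ``it easily follows''). The only substantive difference is in the Tauberian step, and here you are making life harder than necessary.

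The paper simply invokes Delange's Tauberian theorem \cite[Thm.~III]{Del54}. Since the coefficients of $Z_{\br,L}(s)=\sum_{g\in G(k)_\br}H_L(g)^{-s}$ are non-negative, Delange's theorem applies directly to the decomposition furnished by Theorem~\ref{thm:HZF}: one needs only the representation
\[
Z_{\br,L}(s)=(s-a(L))^{-m_\br(L)}g(s)+\sum_{i\in I}(s-a(L))^{-\lambda_i}g_i(s)+h(s),\qquad \re s>a(L),
\]
with $g,g_i,h$ holomorphic on $\re s\ge a(L)$, $0<\lambda_i<m_\br(L)$, and $g(a(L))>0$. No contour shifting, no polynomial bounds in vertical strips, no convexity estimates for the partial Hecke $L$-functions are required. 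The ``main obstacle'' you identify is therefore a phantom: Delange's theorem (which is also what \cite{Lou13} uses) is precisely the robust Tauberian statement you are looking for, and it handles finitely many lower-order branch-point terms automatically. With this in hand the proof is complete in one line after Theorem~\ref{thm:HZF}.
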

The factors $a(L), b(L)$ which appear are the usual factors from the 
Batyrev-Manin conjecture \cite[Conj.~C']{BM90}. The factor $\Delta_X(L,\br)$ is new
and takes into account the collection of Brauer group elements $\br$.
\begin{proof}[Proof of Theorem \ref{thm:arbitrarybiglinebundles}]
	By Remark \ref{rem:heights}, the height zeta function attached to $H_L$ may be written in the form
	\eqref{eqn:HZF_L}. Given Theorem \ref{thm:HZF}, we may thus 
	apply Delange's Tauberian theorem \cite[Thm.~III]{Del54}
	to deduce an asymptotic formula of the shape 
	$$N(G,H_L,\br,B) \sim c_{X,\br,H_L} B^{a(L)} (\log B)^{m_\br(L)-1},
	 \quad \text{as } B \to \infty,$$
 	for some $c_{X,\br,H_L} > 0$.
 	To address $m_\br(L)$, by Proposition \ref{prop:Picard} the effective cone is simplicial and generated by the $D_\alpha$,
 	hence $b(L) = \#\A(L)$.
 	Moreover, it follows from Lemma~\ref{lem:residues} that $|\R_\alpha| = |\res_{D_\alpha}(\br)|$.
 	We find that
	$$
		m_\br(L) = \sum_{\alpha \in \A(L)} \frac{1}{|\R_\alpha|} 
		= b(L) - \sum_{\alpha \in \A(L)}\left( 1- \frac{1}{|\res_{D_\alpha}(\br)|}\right).
	$$
	The adjoint divisor here is given by 
	\begin{align*}
		a(L)L + K_X &=  \sum_{\alpha \in \A} (a(L)a_\alpha  - (1 + \kappa_\alpha))D_\alpha 
		=  \sum_{\alpha \notin \A(L)} (a(L)a_\alpha  - (1 + \kappa_\alpha))D_\alpha
	\end{align*}
	where the coefficients are positive. From this, it easily follows that 
	$$\A \setminus \A(L) = \{ \alpha \in \A: \exists D \sim a(L)L+K_X, \, D_\alpha \subset \mathrm{Supp}(D)\},$$
	as required. 
\end{proof}

\subsubsection{The case of the anticanonical divisor}
We now make  the above more explicit in the case of the anticanonical divisor. The answer we obtain is similar
to the case of anisotropic tori \cite[Thm.~5.15]{Lou13}. The calculation of the leading constant requires some of the theory
of subordinate Brauer group elements, as defined in \cite[\S2.6]{Lou13}, together with the 
Tamagawa measure $\tau_{\br}$ introduced in \cite[\S5.7]{Lou13}.

We first describe the Tamagawa measure $\tau_{\br}$. This is defined in a similar manner to Peyre's Tamagawa
measure \cite[\S2]{Pey95}, except that different convergence factors are used. 
Choose Haar measures $\mathrm{d}x_v$ on each $k_v$ such that $\vol(\OO_v)=1$ for all but finitely
many $v$. These thus give rise to a Haar measure $\mathrm{d}x$ on $\Adele_k$; we choose
our Haar measures so that $\vol(\Adele_k/k)=1$ with respect to the induced quotient measure.
Let $\omega$ be a left invariant top degree
differential form on $G$. By a classical construction \cite[\S2.1.7]{CLT10}, for any place $v$ of $k$
we obtain a measure $|\omega|_v$ which depends on the choice of $\mathrm{d}x_v$. Peyre's local Tamagawa measure 
is then given by
$$\tau_v = \frac{|\omega|_v}{\| \omega \|_v}.$$
For the convergence factors, we use the following virtual integral
Artin representation
$$\Pic_\br(\overline{X}) = \Pic(\overline{X}) - \sum_{\alpha \in \A} 
\left(1 - \frac{1}{|\res_{D_\alpha}(\br)|}\right) \Ind_{k_\alpha/k} \ZZ,$$
with associated virtual Artin $L$-function $L(\Pic_\br(\overline{X})_\CC,s)$ (cf.~\cite[\S5.7.1]{Lou13}). 
The relevant measure is then defined to be
$$\tau_\br = L^*(\Pic_\br(\overline{X})_\CC,1)
\prod_v \left( \tau_v \cdot L_v(\Pic_\br(\overline{X})_\CC,1)^{-1}\right).$$
Here
$$L^*(\Pic_\br(\overline{X})_\CC,1) = \lim_{s \to 1}(s-1)^{\rho(X) - \Delta_X(\br)}L(\Pic_\br(\overline{X})_\CC,s)$$ and  $L_v(\Pic_\br(\overline{X})_\CC,s)$ denotes the corresponding local Euler factor when $v$
is non-archimedean, and $L_v(\Pic_\br(\overline{X})_\CC,s) = 1$ otherwise. Note that we do not include
a discriminant factor like Peyre, as we have normalised our measures so that $\vol(\Adele_k/k)=1$.
It follows from Theorem \ref{thm:characters} that these are a family of convergence factors
(see the proof of Theorem \ref{thm:leading_constant} for details). 

\begin{theorem} \label{thm:leading_constant}
	Under the same assumptions as Theorem \ref{thm:semi-simple}, we have
	$$N(G,H,\br,B) \sim c_{X,\br,H} B \frac{(\log B)^{\rho(X)-1}}{(\log B)^{\Delta_X(\br)}},
	 \quad \text{as } B \to \infty.$$
	Here
	$$c_{X,\br,H}=\frac{|\Sub(X,\br)/\Br k| \cdot \tau_{\br}(G(\Adele_k)_{\br}^{\Sub(X,\br)})}
	{|\Pic G| \cdot \Gamma(\rho(X) - \Delta_X(\br)) \cdot
	\prod_{\alpha \in \A} (1+\kappa_\alpha)^{1/|\res_{D_\alpha}(\br)|}},$$
	where $\Gamma$ is the usual Gamma-function, the $\kappa_\alpha$ are as in 
	\eqref{def:kappa}, and 
	$$\Sub(X,\br) = \{b \in \Br G: \res_D(b) \in \langle \res_D(\br) \rangle\, \forall D \in X^{(1)} \},$$
	denotes the associated group of subordinate Brauer group elements.
	We also denote by $G(\Adele_k)_{\br}^{\Sub(X,\br)}$ the subset of $G(\Adele_k)_{\br}$ othogonal to 
	$\Sub(X,\br)$ with respect to the global Brauer pairing \eqref{def:global_Brauer_pairing}.
\end{theorem}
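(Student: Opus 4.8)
The plan is to combine Theorem~\ref{thm:HZF} in the case $L=-K_X$ with a Tamagawa-measure computation modelled on Peyre \cite{Pey95} and on \cite[\S5.7]{Lou13}. First I would specialise Theorem~\ref{thm:HZF}: for $L=-K_X$ one has $\underline{a}=\underline{\kappa}$, $a(L)=1$ and $\A(L)=\A$, so that by Lemma~\ref{lem:residues} and $\rho(X)=|\A|$ (Proposition~\ref{prop:Picard}(1)),
\[
m_\br(-K_X)=\sum_{\alpha\in\A}\tfrac{1}{|\R_\alpha|}=\sum_{\alpha\in\A}\tfrac{1}{|\res_{D_\alpha}(\br)|}=\rho(X)-\Delta_X(\br).
\]
Thus $Z_\br(s\underline{\kappa})$ has a branch point singularity of order $-(\rho(X)-\Delta_X(\br))$ at $s=1$, and Delange's Tauberian theorem \cite[Thm.~III]{Del54} yields the asymptotic formula with
\[
c_{X,\br,H}=\frac{1}{\Gamma(\rho(X)-\Delta_X(\br))}\lim_{s\to1}(s-1)^{\rho(X)-\Delta_X(\br)}Z_\br(s\underline{\kappa}),
\]
which accounts for the $\Gamma$-factor in the denominator; everything else amounts to evaluating this limit.

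Next I would reduce the limit to a single adelic integral. By the closing formula of Theorem~\ref{thm:HZF} and character orthogonality over the finite group $\mathfrak X_{\K_0}(-K_X)$ (as in the proof of \cite[Lem.~5.14]{Lou13}), the limit equals $|\mathfrak X_{\K_0}(-K_X)|$ times $\lim_{s\to1}(s-1)^{\rho(X)-\Delta_X(\br)}\widehat H(\br',s\underline{\kappa},1)$, where $\br'\subset\Br_e G$ is the finite subgroup attached to $\mathfrak X_{\K_0}(-K_X)$ by Corollary~\ref{cor:Br_adjoint} and $\widehat H(\br',s\underline{\kappa},1)=\int_{G(\Adele_k)_{\br'}}H(s\underline{\kappa},g)^{-1}\,\mathrm dg$. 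Since $\br$ is a group, $\res_{D_\alpha}(\br)$ is already a group, so Lemma~\ref{lem:residues} identifies the condition $\chi_\alpha\in\R_\alpha$ defining $\mathfrak X_{\K_0}(-K_X)$ with $\res_{D_\alpha}(b_\chi)\in\langle\res_{D_\alpha}(\br)\rangle$; choosing $\K_0$ small enough (Lemma~\ref{lem:K-invariant} for the finite group $\Sub(X,\br)$, which does not change the intrinsic quantity $Z_\br$) one gets $\mathfrak X_{\K_0}(-K_X)=(\Sub(X,\br)/\Br k)^\wedge$, hence $|\mathfrak X_{\K_0}(-K_X)|=|\Sub(X,\br)/\Br k|$. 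Moreover, since $\br\subset\Sub(X,\br)$ and constant algebras pair trivially with $G(\Adele_k)$ by \eqref{seq:CFT}, the domain $G(\Adele_k)_{\br'}$ coincides with $G(\Adele_k)_{\br}^{\Sub(X,\br)}$; and $\R'_\alpha=\R_\alpha$ because $\Sub(X,\br)$ has the same residues as $\br$ at every $D_\alpha$, so $\br'$ and $\br$ share the Tamagawa measure $\tau_{\br'}=\tau_\br$.

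Finally I would evaluate $\lim_{s\to1}(s-1)^{\rho(X)-\Delta_X(\br)}\widehat H(\br',s\underline{\kappa},1)$. By Theorem~\ref{thm:characters} applied to $\br'$ with the trivial character, $\widehat H(\br',s\underline{\kappa},1)=f(s\underline{\kappa},1)\prod_{\alpha\in\A}L_{\R_\alpha}(1,s_\alpha-\kappa_\alpha)$ with $s_\alpha=s(1+\kappa_\alpha)$ and $f(\cdot,1)$ holomorphic and nonzero at $s=1$. Since $s_\alpha-\kappa_\alpha=1+(s-1)(1+\kappa_\alpha)$, Lemma~\ref{lem:partial} gives $\lim_{s\to1}(s-1)^{1/|\R_\alpha|}L_{\R_\alpha}(1,s_\alpha-\kappa_\alpha)=(1+\kappa_\alpha)^{-1/|\R_\alpha|}\lim_{t\to1}(t-1)^{1/|\R_\alpha|}L_{\R_\alpha}(1,t)$, producing the factor $\prod_\alpha(1+\kappa_\alpha)^{1/|\res_{D_\alpha}(\br)|}$ in the denominator. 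It then remains to match $f(\underline{\kappa},1)\prod_\alpha\lim_{t\to1}(t-1)^{1/|\R_\alpha|}L_{\R_\alpha}(1,t)$ with $\tau_\br(G(\Adele_k)_{\br}^{\Sub(X,\br)})/|\Pic G|$. Here I would unwind the Euler product for $\widehat H(\br',s\underline{\kappa},1)$ place by place, using the explicit local formulae underlying Theorem~\ref{thm:characters} (namely $(7.7)$ of \cite{JAMS}) together with the definitions of $\tau_v$ and of the convergence factors $L_v(\Pic_\br(\overline X)_\CC,1)$, to show that $\prod_v\bigl(\text{local height integral}\cdot L_v(\Pic_\br(\overline X)_\CC,1)^{-1}\bigr)$ converges to $\tau_\br(G(\Adele_k)_{\br}^{\Sub(X,\br)})$ while the singular part is carried by $L(\Pic_\br(\overline X)_\CC,s)=\prod_\alpha\zeta_{k_\alpha}(s)^{1/|\res_{D_\alpha}(\br)|}$, whose leading coefficient at $s=1$ is $L^*(\Pic_\br(\overline X)_\CC,1)$; the factor $1/|\Pic G|$ appears because $\mathrm dg$, normalised by $\vol(G(\Adele_k)/G(k))=1$, differs from the Tamagawa measure by the Tamagawa number $\tau(G)=|\Pic G|$ (Ono's formula, using $\Be(G)=0$ as $G$ is adjoint). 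Assembling the three steps yields the claimed formula. The main obstacle is this last step: the place-by-place comparison between the regularised adelic height integral and $\tau_\br$, in which one must keep exact track of the convergence factors, the finitely many ``bad'' places with $\K_v\neq G(\OO_v)$, the archimedean places, and the Tamagawa number — a Peyre-type computation parallel to \cite[Thm.~5.15]{Lou13}, complicated here by the need to control how the conditions imposed by $\br$ (through $\thorn$) and by subordination interact with the local measures.
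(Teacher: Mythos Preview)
Your proposal is correct and follows essentially the same route as the paper: specialise Theorem~\ref{thm:HZF} to $L=-K_X$, apply Delange, identify $\mathfrak X_{\K_0}(-K_X)$ with $\Sub(X,\br)\cap\Br_e G$ via Corollary~\ref{cor:Br_adjoint} and Lemma~\ref{lem:residues}, extract the factors $(1+\kappa_\alpha)^{1/|\res_{D_\alpha}(\br)|}$ by the change of variable $s_\alpha-\kappa_\alpha=1+(s-1)(1+\kappa_\alpha)$, and compare the regularised integral with $\tau_\br$ using Ono's Tamagawa number formula. One small correction: in the last step you need $\Sha(G)=0$ for adjoint $G$ (\cite[Cor.~5.4]{San81}), not $\Be(G)=0$; Ono's formula reads $\omega(G)=|\Pic G|/|\Sha(G)|$, and it is the vanishing of $\Sha(G)$ that gives $\mathrm d g=|\omega|/|\Pic G|$.
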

\begin{proof}
	The asymptotic formula follows from Theorem \ref{thm:arbitrarybiglinebundles}, as we have
	$a(-K_X)=1$ and $b(-K_X) = \rho(X)$. One also easily sees that $\Delta_X(-K_X, \br) = \Delta_X(\br)$,
	as the adjoint divisor is trivial in this case.
	
	It thus suffices to calculate the leading constant $c_{X,\br,H}$,
	which we do using the expression from Theorem \ref{thm:HZF}. An application of Delange's Tauberian theorem \cite[Thm.~III]{Del54}	shows that
	$$c_{X,\br,H} = 		\frac{|\mathfrak{X}_{\K_0}(-K_X)|}{
	\Gamma(\rho(X) - \Delta_X(\br))}
	\lim_{s \to 1} (s-1)^{\rho(X) -\Delta_X(\br)}
		\int_{G(\Adele_k)_{\br}^{\mathfrak{X}_{\K_0}(-K_X)}} H(g)^{-s} \mathrm{d}g.$$
	
	Note that we have $\A(-K_X) = \A$. It therefore follows easily from 
	Corollary \ref{cor:Br_adjoint} and Lemma \ref{lem:residues} that
	$$\mathfrak{X}_{\K_0}(-K_X) \cong \Sub(X,\br) \cap \Br_e G,$$
	and thus $|\mathfrak{X}_{\K_0}(-K_X)| = |\Sub (X,\br)/\Br k|$ (note that $\Sub(X,\br) \subset \Br_1 X$ by \cite[Lem.~2.14]{Lou13}).

	Next, by Proposition \ref{prop:Picard} 
	we have $\Pic(\overline{X})_\CC \cong \oplus_{\alpha \in \A} \Ind_{k_\alpha/k} \CC$,
	hence the associated virtual Artin $L$-function has the form
	\begin{equation} \label{eqn:Artin_L}
		L(s, \Pic_\br(\overline{X})) = \prod_{\alpha \in \A} \zeta_{k_\alpha}(s)^{1/|\res_{D_\alpha}(\br)|}.
	\end{equation}
	Using this, we find that
	\begin{align*}
		& \lim_{s \to 1} (s-1)^{\rho(X) - \Delta_X(\br) }
		\int_{G(\Adele_k)_{\br}^{\mathfrak{X}_{\K_0}(-K_X)}} H(g)^{-s} \mathrm{d}g \\
		= & \lim_{s \to 1} (s-1)^{\rho(X) - \Delta_X(\br) } 
		\frac{\prod_{\alpha \in \A} \zeta_{k_\alpha}((\kappa_\alpha + 1)s - \kappa_\alpha)^{1/|\res_{D_\alpha}(\br)|}}{\prod_{\alpha \in \A} \zeta_{k_\alpha}((\kappa_\alpha + 1)s - \kappa_\alpha)^{1/|\res_{D_\alpha}(\br)|}}
		\int_{G(\Adele_k)_{\br}^{\Sub(X,\br)}} H(g)^{-s} \mathrm{d}g \\
		= & \frac{ L^*(\Pic_\br(\overline{X})_\CC,1)}
		{\prod_{\alpha \in \A} (1+\kappa_\alpha)^{1/|\res_{D_\alpha}(\br)|}}
		\int_{G(\Adele_k)_{\br}^{\Sub(X,\br)}} \prod_v L_v(\Pic_\br(\overline{X})_\CC,1)^{-1}
		H(g)^{-1} \mathrm{d}g.
	\end{align*}
	Here one justifies taking the limit inside the integral
	using the dominated convergence theorem, which applies due to the explicit expressions for the local
	height integrals given in the proof of Theorem \ref{thm:characters} (cf.~\eqref{eqn:Euler_factor}).
	
	It remains to express the above integral in terms of the Tamagawa measure $\tau_\br$.
	Recall that $\mathrm{d}g$ is a choice of Haar measure on $G(\Adele_k)$ normalised so that $\vol(G(\Adele_k)/G(k))=1$.
	However, as $G$ is semi-simple, the measure $|\omega| = \prod_v|\omega|_v$ is the classical (i.e.~Weil's) Tamagawa
	measure on $G(\Adele_k)$. Denote by $\omega(G)$ the 
	Tamagawa number of $G$, i.e.~the measure of $G(\Adele_k)/G(k)$ with respect to $|\omega|$.
	A theorem of Ono \cite{Ono65} (see also
	\cite[Thm.~10.1]{San81}) states that
	$$\omega(G) = \frac{|\Pic G|}{|\Sha(G)|}.$$

	However, as $G$ is adjoint we have $\Sha(G) = 0$ \cite[Cor.~5.4]{San81}. Thus
	$\omega(G) = |\Pic G|$ and so by properties of Haar measures we obtain 
	\begin{equation} \label{eqn:measures}
		\mathrm{d} g = \frac{1}{|\Pic G|} |\omega|.
	\end{equation}	
	Using this we obtain
	\begin{align*}
		&L^*(\Pic_\br(\overline{X})_\CC,1)
		\int_{G(\Adele_k)_{\br}^{\Sub(X,\br)}} \prod_v L_v(\Pic_\br(\overline{X})_\CC,1)^{-1}
		H(g)^{-1} \mathrm{d}g  \\
		& = \frac{1}{|\Pic G|} \tau_\br\left(G(\Adele_k)_{\br}^{\Sub(X,\br)}\right).
	\end{align*}
	Combining everything together gives the result.
\end{proof}

\begin{remark}
	Let us consider the leading constant in Theorem \ref{thm:leading_constant}
	in the classical case $\br = \{0\}$, and explain how one recovers Peyre's constant
	$c_{\textrm{Peyre}} = \alpha(X)\beta(X)\tau(X)$ \cite[Def.~2.5]{Pey95}.
	Clearly $\tau_\br = \tau$ is Peyre's Tamagawa measure in this case. We also
	have
	$$\Sub(X,0)= \Br X.$$
	However $\beta(X)=|\Sub(X,0)/\Br k| = |\HH^1(k, \Pic \overline{X})| =1$ for wonderful compactifications 
	(see the proof of \cite[Thm.~9.2]{JAMS}).
	Using the explicit description of the effective cone given in Proposition \ref{prop:Picard},
	a simple calculation also shows that Peyre's effective cone constant
	is equal to
	$$\alpha(X) = \frac{1}{|\Pic G| \cdot (\rho(X) - 1)! \cdot \prod_{\alpha \in \A}(1 + \kappa_\alpha)}.$$
	This agrees with the denominator in Theorem \ref{thm:leading_constant} when $\br = \{0\}$.
\end{remark}

\section{The wonderful compactification of \texorpdfstring{$\PGL_n$}{Lg}} \label{sec:PGLn}
We now prove Theorem \ref{thm:PGL} using Theorem \ref{thm:arbitrarybiglinebundles}.

\subsection{The construction of the wonderful compactification}

We have the standard compactification of $\mathrm{PGL}_n$ in 
\[
X^{(0)}= \mathbb P^{n^2-1} = \mathrm{Proj}(k[x_{i, j}:{1 \leq i, j \leq n}]).
\]
This compactification is bi-equivariant, but not a wonderful  if $n\geq 3$. We construct the wonderful compactification of $\mathrm{PGL}_n$ by blowing up $X^{(0)}$. To this end, we consider the sequence of loci:
\[
Y^{(0)}_2 \hookrightarrow Y^{(0)}_3 \hookrightarrow \cdots \hookrightarrow Y^{(0)}_{n} \hookrightarrow X^{(0)},
\]
where $Y^{(0)}_r$ is the vanishing locus of the determinants of $r\times r$ minors of $(x_{i, j})_{1\leq i, j \leq n}$.
The wonderful compactification is given by blowing-up each of the loci $Y^{(0)}_r$ in turn, for $2 \leq r \leq n-1$.

Namely, we inductively construct $X^{(m)}$ and $Y^{(m)}_r$ for $1 \leq m \leq n-2$ as follows:
the variety $X^{(m)}$ is the blow up of $X^{(m-1)}$ along $Y^{(m-1)}_{m+1}$. We let $Y^{(m)}_r$ be the strict transform of $Y^{(m-1)}_r$, except when $r = m+1$ where we let $Y^{(m)}_{m+1}$ be the pullback of $Y^{(m-1)}_{m+1}$.
Let $X = X^{(n-2)}$ and $Y_r = Y_r^{(n-2)}$.
Note that $Y_{n}$ is the strict transform of the divisor $D: \det_n = 0 \subset X^{(0)}$.

\begin{proposition}{\cite[\S8]{Kau00}} \label{prop:wonderful}
The variety $X$ is the wonderful compactification of $\mathrm{PGL}_n$ with boundary components $Y_r$ $(2 \leq r \leq n)$. 
\end{proposition}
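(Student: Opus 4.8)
The statement is Kausz's \cite[\S8]{Kau00}; I indicate the shape of a direct verification, which is perhaps more transparent in our setting. The wonderful compactification of an adjoint semi-simple group is unique up to equivariant isomorphism (de Concini--Procesi \cite{DCP83}), and is characterised among smooth projective bi-equivariant compactifications $\PGL_n \subset X$ by the following: $X \setminus \PGL_n$ is a strict normal crossings divisor with exactly $\rank \PGL_n = n-1$ irreducible components whose total intersection is non-empty, and the closures of the $\PGL_n \times \PGL_n$-orbits on $X$ are precisely the partial intersections of these components (equivalently, $X$ is a smooth complete toroidal embedding of $\PGL_n$ with a unique closed orbit). So the plan is to verify these properties for $X = X^{(n-2)}$ and to identify its boundary divisors with the $Y_r$.

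\textbf{Step 1: smoothness and the normal crossings property.} The geometric heart is to recognise $X$ as the classical space of complete collineations of $\PP^{n-1}_k$ (Semple, Vainsencher, Laksov, Thaddeus), and in particular to prove it is smooth with normal crossings boundary. One argues by induction on $m$: supposing $X^{(m-1)}$ is smooth and the exceptional divisors created so far form a normal crossings divisor, one shows that the centre $Y^{(m-1)}_{m+1}$, i.e. the strict transform of $\{\mathrm{rk}\le m\} \subset \PP^{n^2-1}$, is smooth and meets that divisor transversally, so that $X^{(m)}$ is again smooth with normal crossings boundary. This reduces to the structure of the generic determinantal singularity: \'etale-locally near a point of rank exactly $m$, the locus $\{\mathrm{rk}\le m'\}$ is the product of a smooth factor with the affine cone over a Segre embedding of a product of two Grassmannians, and blowing up the lower-rank strata in increasing order of rank resolves this cone, each new exceptional divisor being a projective bundle over a Grassmannian bundle which crosses the previous divisors transversally. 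I expect the careful bookkeeping of the successive strict transforms and the transversality check to be the main obstacle, although it is standard in the complete-collineations literature and is carried out in \cite{Kau00}.

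\textbf{Step 2: bi-equivariance, the boundary divisors, and the orbit structure.} The action of $\PGL_n \times \PGL_n$ on $\PP^{n^2-1} = \PP(\mathrm{Mat}_n)$ by $(g,h)\cdot M = gMh^{-1}$ preserves each $Y^{(0)}_r$, since the rank of a matrix is a two-sided invariant; hence by the universal property of blowing up the action lifts compatibly to every $X^{(m)}$, making $X$ a smooth projective bi-equivariant compactification of $\PGL_n$. Its boundary consists of exactly $n-1$ irreducible components: the $n-2$ exceptional divisors produced by blowing up $\{\mathrm{rk}\le 1\}, \dots, \{\mathrm{rk}\le n-2\}$ in turn, namely $Y_2, \dots, Y_{n-1}$, together with $Y_n$, the strict transform of the determinant hypersurface $\{\det_n = 0\}$; and $(n-2)+1 = n-1 = \rank \PGL_n$, so the component count is correct. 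Finally one checks that the $\PGL_n \times \PGL_n$-orbits on $X$ are precisely those whose closures are the partial intersections $\bigcap_{r \in S} Y_r$ for $S \subseteq \{2,\dots,n\}$, with the total intersection $\bigcap_{r=2}^{n} Y_r$ non-empty and a single closed orbit (isomorphic to $\PGL_n/B \times \PGL_n/B$ for a Borel $B$, of the expected codimension $n-1$); equivalently, that $X$ carries no colour containing an orbit, so is toroidal. Both follow from the local description of Step 1 together with a dimension count showing each stratum meets the closure of the open orbit in a single orbit. By the uniqueness of the wonderful compactification this forces $X$ to be it, with boundary components $Y_2, \dots, Y_n$ as claimed.
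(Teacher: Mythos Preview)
The paper does not actually prove this proposition: it is stated with a bare citation to Kausz \cite[\S8]{Kau00} and no argument is given. Your proposal is therefore not competing with a proof in the paper but rather sketching what lies behind the citation.

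As a sketch of the argument your outline is accurate. The identification of $X$ with the space of complete collineations, the inductive smoothness/normal-crossings verification via the local structure of determinantal loci, the lift of the bi-action through the blow-ups, the count $n-1$ of boundary components, and the characterisation of the wonderful compactification as the unique smooth complete toroidal $(G\times G)$-embedding with a single closed orbit are all correct and constitute the standard route (Vainsencher, Laksov, Thaddeus, Kausz). Note, however, that what you have written is still only a road map: the genuine work sits in your Step~1 (the transversality of the successive strict transforms and the smoothness of each new centre), and you yourself defer this to ``the complete-collineations literature'' and to \cite{Kau00}. So in the end your proposal, like the paper, ultimately rests on the cited reference; it just unpacks what that reference is doing.
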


\subsection{Proof of Theorem~\ref{thm:PGL}}
Let $b \in \Br U$ be such that $U(\QQ)_b \neq \emptyset$. By Proposition~\ref{prop:PGL_n} we have $b \in \Br_1 U$.
Let $L$ be the pullback of the hyperplane class via $\pi : X \rightarrow X^{(0)} = \PP^{n^2-1}$.
As shown in \cite[\S2]{HTT15} the invariants $a(L), b(L)$ are birational invariants, so computing these  on $X^{(0)}$ gives  $a(L)=n^2$ and $b(L) = 1$.
Next smooth varieties have terminal singularities, hence
$$\pi^*(-K_{X^{(0)}}) = -K_X + \sum_{r=2}^{n-1}a_rY_r$$
for some $a_r > 0$. It follows that $Y_n$ is the only boundary component which does not appear the effective divisor linearly equivalent to $a(L)L + K_X$. So by Theorem~\ref{thm:arbitrarybiglinebundles} it suffices to show that the residue of $b$ along $Y_{n}$ has order $|b|$.   However, as $Y_{n}$ is the strict transform of $D$, we have $|\res_{Y_{n}}(b)| = |\res_D(b)|$. Since the residue map is injective in this case (see \eqref{eqn:res_PGL_n}), we find that $|\res_D(b)|= |b|$, as required. \qed

\end{document}